\theoremstyle{plain}
\newtheorem{theorem}[subsection]{{\bf Theorem}}
\newtheorem*{theorem*}{{\bf Theorem}}
\newtheorem{corollary}[subsection]{{\bf Corollary}}
\newtheorem*{corollary*}{{\bf Corollary}}
\newtheorem{proposition}[subsection]{{\bf Proposition}}
\newtheorem{lemma}[subsection]{{\bf Lemma}}
\theoremstyle{definition}
\theoremstyle{remark}
\newtheorem{remark}[subsection]{{\it Remark}}
\newtheorem*{remark*}{{\it Remark}}
\newtheorem{example}[subsection]{{\it Example}}
\numberwithin{equation}{subsection}
\DeclareMathOperator{\im}{im}
\DeclareMathOperator{\Ext}{Ext}
\DeclareMathOperator{\HH}{H}
\DeclareMathOperator{\K}{K}
\DeclareMathOperator{\B}{B}
\DeclareMathOperator{\Z}{Z}
\DeclareMathOperator{\Bt}{B}
\DeclareMathOperator{\M}{M}
\DeclareMathOperator{\dd}{d}
\DeclareMathOperator{\Aut}{Aut}
\DeclareMathOperator{\Hom}{Hom}
\DeclareMathOperator{\GL}{GL}
\DeclareMathOperator{\cp}{cp}
\DeclareMathOperator{\UT}{UT}
\DeclareMathOperator{\rank}{rank}
\DeclareMathOperator{\rr}{r}
\DeclareSymbolFont{cyrletters}{OT2}{wncyr}{m}{n}
\DeclareMathSymbol{\Sha}{\mathalpha}{cyrletters}{"58}
\newcommand{\QZ}{\mathbb{Q}/\mathbb{Z}}
\newcommand{\ZCP}{\Z _{\rm CP}}
\newcommand{\CC}{\mathbb C}
\newcommand{\HHCP}{\HH_{\rm CP}}
\newcommand{\cwedge}{\curlywedge}
\newcommand{\FF}{\mathbb F}
\begin{document}
\baselineskip=14pt
\title[CP extensions]{Commutativity preserving extensions of groups}
\author[U. Jezernik]{Urban Jezernik}
\address[Urban Jezernik]{
Institute of Mathematics, Physics, and Mechanics \\
Jadranska 19 \\
1000 Ljubljana \\
Slovenia}
\thanks{}
\email{urban.jezernik@imfm.si}
\author[P. Moravec]{Primo\v{z} Moravec}
\address[Primo\v{z} Moravec]{
Department of Mathematics \\
University of Ljubljana \\
Jadranska 21 \\
1000 Ljubljana \\
Slovenia}
\thanks{}
\email{primoz.moravec@fmf.uni-lj.si}
\subjclass[2010]{20E22, 20J06}
\keywords{Commutativity preserving extension, Bogomolov multiplier, commuting probability}
\thanks{}
\date{\today}
%%%%%%%%%%%%%%%%%%%%%%%%%%%%%%%%%%%%%%%%%%%%%%%%%%%%%%%%%%%%%%%%%%%%%
\begin{abstract}

\noindent In parallel to the classical theory of central extensions of groups,
we develop a version for extensions that preserve commutativity. It is shown
that the Bogomolov multiplier is a universal object parametrizing such
extensions of a given group. Maximal and minimal extensions are inspected,
and a connection with commuting probability is explored. Such considerations
produce bounds for the exponent and rank of the Bogomolov multiplier.

\end{abstract}
%%%%%%%%%%%%%%%%%%%%%%%%%%%%%%%%%%%%%%%%%%%%%%%%%%%%%%%%%%%%%%%%%%%%%%%
\maketitle
%%%%%%%%%%%%%%%%%%%%%%%%%%%%%%%%%%%%%%%%%%%%%%%%%%%%%%%%%%%%%%%%%%%%%%%
\section{Introduction}
\label{s:intro}

\noindent
Noether's problem \cite{Noe16} is one of the fundamental problems of invariant theory, and asks  
as to whether the field of $Q$-invariant functions $\mathbb{C}(V)^Q$ is purely transcendental over $\mathbb{C}$, where $Q$ is a given finite group. Artin and Mumford \cite{Art72} introduced an obstruction $\HH^2_{\rm nr}(\mathbb{C}(V)^Q,\QZ )$ to this problem, called the {\it unramified Brauer group} of the field extension $\mathbb{C}(V)^Q/\mathbb{C}$. In his seminal work, Bogomolov \cite{Bog88} proved that $\HH^2_{\rm nr}(\mathbb{C}(V)^Q,\QZ )$ is canonically isomorphic to the intersection of the kernels of restriction maps $\HH^2(Q,\QZ)\to \HH^2(A,\QZ)$, where $A$ runs through all abelian subgroups of $Q$. 
A simplified description of $\HH^2_{\rm nr}(\mathbb{C}(V)^Q,\QZ )$ was found in \cite{Mor12} by considering its dual $\B_0(Q)$. Following Kunyavski\u\i\ \cite{Kun08}, we call the latter group the {\it Bogomolov multiplier} of $Q$. The description of $\B_0$ is combinatorial and enables efficient explicit calculations. Furthermore, it relates Bogomolov multipliers to the commuting probability of a group \cite{Jez15}, and shows that $\B_0$ plays a role in describing the so-called {\it commutativity preserving central extensions} of groups, which are in close relationship with some problems in K-theory \cite{Mor12}.

In this paper we develop a theory of commutativity preserving group extensions with abelian kernel. Specifically, let $Q$ be a group and $N$ a $Q$-module. Denote by $e=(\chi , G,\pi )$ the extension
\begin{equation*}
\xymatrix{ 1\ar[r] & N\ar[r]^\chi & G\ar[r]^\pi & Q\ar[r] & 1}
\end{equation*}
of $N$ by $Q$.
Following \cite{Mor12}, we say that $e$ is a {\it CP extension} if commuting
pairs of elements of $Q$ have commuting lifts in $G$.
 In the first part of the paper we define a subgroup $\HHCP^2(Q,N)$ of the second cohomology group $\HH^2(Q,N)$ that classifies CP extensions of $N$ by $Q$ up to equivalence. Then we focus on central CP extensions. We prove a variant of the Universal Coefficient Theorem by showing that, given a trivial $Q$-module $N$, there is a split exact sequence
\begin{equation*}
\xymatrix{0\ar[r] & \Ext (Q^{\rm ab},N)\ar[r] & 
	\HHCP^2(Q,N)\ar[r] & \Hom(\Bt_0(Q),N)\ar[r] & 0}.
\end{equation*}
In addition to that, we provide several characterizations of central CP extensions, and prove that these are closed under isoclinism of extensions. Subsequently, we show that the isoclinism classes of central CP extensions with a given factor group $Q$ are in bijective correspondence with the orbits of the action of $\Aut Q$ upon the subgroups of $\B_0(Q)$.

In what follows, we consider stem central CP extensions of $N$ by $Q$, where $|N|=|\B_0(Q)|$. We call such extensions {\it CP covers} of $Q$. These are analogs of the usual covers in the theory of Schur multipliers. We show that every finite group has a CP cover, and that all CP covers of isoclinic groups are isoclinic.
Further, we show how CP covers are, in a suitable sense, precisely the maximal central CP extensions of $Q$.
In the succeeding section we then also consider minimal central CP extensions, i.e., those whose kernel is a cyclic group of prime order. Such extensions are parametrized by $\HHCP^2(Q,\mathbb{F}_p)$. The main result in this direction is that this group is an elementary abelian $p$-group of rank $\dd(Q)+\dd(\B_0(Q))$.

Applying the theory of CP covers, we derive some bounds for the order, rank, and exponent of the Bogomolov multiplier of a given finite group $Q$. We obtain bounds for $\B_0(Q)$ that correspond to those for Schur multipliers obtained by Jones and Wiegold \cite{Jon73} and Jones \cite{Jon74}. On the other hand, a special feature of $\B_0$ is that it is closely related to the commuting probability of the group in question, that is, the probability that two randomly chosen elements of the group commute. This was already explored in \cite{Jez15} where we proved that if the commuting probability of $Q$ is strictly greater than $1/4$, then $\B_0(Q)$ is trivial. Here we prove that if the commuting probability of a finite group $Q$ is strictly greater than a fixed $\epsilon>0$, then the order of $\B_0(Q)$ can be bounded in terms of $\epsilon$ and the maximum of minimal numbers of generators of Sylow subgroups of $Q$. Furthermore, we show that $\exp B_0(Q)$ can be bounded in terms of $\epsilon$ only.

%%%%%%%%%%%%%%%%%%%%%%%%%%%%%%%%%%%%%%%%%%%%%%%%%%%%%%%%%%%%%%%%%%%%%%%
\section{CP extensions}
\label{s:acp}

\noindent
The purpose of this section is to establish a cohomological object that encodes all the information on CP extensions up to equivalence. We refer to \cite{Bro82} for an account on the theory of group extensions.
% \marginpar{\tiny Definitions of $Z^2$, $B^2$, $H^2$, equivalence,...?}

\begin{lemma}
\label{l:equiv}
The class of CP extensions is closed under equivalence of extensions.
\end{lemma}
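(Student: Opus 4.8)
Recall that an equivalence between two extensions $e=(\chi,G,\pi)$ and $e'=(\chi',G',\pi')$ of $N$ by $Q$ is a group homomorphism $\phi\colon G\to G'$ with $\phi\chi=\chi'$ and $\pi'\phi=\pi$; by the five lemma $\phi$ is automatically an isomorphism, though we will not even need this. The plan is simply to transport commuting lifts along $\phi$. So suppose $e$ is a CP extension and let $\phi$ be an equivalence from $e$ to $e'$; I claim $e'$ is CP as well.

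First I would fix a commuting pair $q_1,q_2\in Q$. Since $e$ is a CP extension, there exist $g_1,g_2\in G$ with $\pi(g_i)=q_i$ for $i=1,2$ and $[g_1,g_2]=1$. Set $g_i'=\phi(g_i)\in G'$. Then $\pi'(g_i')=\pi'\phi(g_i)=\pi(g_i)=q_i$, so $g_1',g_2'$ are lifts of $q_1,q_2$ to $G'$; and since $\phi$ is a homomorphism, $[g_1',g_2']=\phi([g_1,g_2])=\phi(1)=1$. Hence the commuting pair $q_1,q_2$ has commuting lifts in $G'$. As $q_1,q_2$ were an arbitrary commuting pair, $e'$ is a CP extension.

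There is no genuine obstacle here; the only point worth flagging is that the CP condition asks merely for the \emph{existence} of one commuting pair of lifts (not that all lifts commute), and this existential statement is visibly preserved by applying the homomorphism $\phi$. I would close by remarking that the argument is symmetric in $e$ and $e'$ (apply it to $\phi^{-1}$), so being a CP extension is an invariant of the equivalence class; in fact the proof only used that $\phi$ is a homomorphism over $Q$, not that it is an isomorphism or that it restricts to the identity on $N$.
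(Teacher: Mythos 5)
Your proof is correct and is essentially the same as the paper's: both transport a commuting pair of lifts through the equivalence homomorphism $\phi$ (the paper's $\theta$) and note that commutativity and the lifting condition are preserved. The extra remarks on the five lemma and symmetry are fine but not needed.
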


\begin{proof}
Let 
\begin{equation*}
\xymatrix{ 
0\ar[r] & N\ar[r]^{\mu _1}\ar@{=}[d] & G_1\ar[d]^\theta\ar[r]^{\epsilon _1} & 
	Q\ar@{=}[d]\ar[r] & 1\\
0\ar[r] & N\ar[r]^{\mu _2} & G_2\ar[r]^{\epsilon _2} & Q\ar[r] & 1
}
\end{equation*}
be equivalent extensions with abelian kernel. Suppose that $G_1$ is a CP
extension of $N$ by $Q$. Choose $x_1,x_2\in Q$ with $[x_1,x_2]=1$. Then there
exist $e_1,e_2\in G_1$ such that $[e_1,e_2]=1$ and $\epsilon _1(e_i)=x_i$,
$i=1,2$. Take $\bar{e}_i=\theta (e_i)$. Then $[\bar{e}_1,\bar{e}_2]=1$ and
$\epsilon _2(\bar{e}_i)=x_i$. This proves that $G_2$ is a CP extension of $N$ by
$Q$.
\end{proof}

As in the classical setting of group extensions, CP extensions can be interpreted
in a cohomological manner.
Let $Q$ and $S$ be groups, and suppose that $Q$ acts on $S$ via 
$(x,y)\mapsto {}^xy$, where $x\in Q$ and $y\in S$. A map $\partial : Q\to S$ is
a {\it derivation} (or  {\it 1-cocycle}) {\it from $Q$ to $S$} if $\partial
(xy)={}^x\partial (y)\partial (x)$ for all $x,y\in Q$. Let $N$ be a $Q$-module
and fix $a\in N$. The map $\partial _a:Q\to N$, given by $\partial _a(g)=ga-a$,
is a derivation. It is called an {\it inner derivation}.

A cocycle $\omega\in \Z^2(Q,N)$ is said to be a {\it CP cocycle} if for all
commuting pairs $x_1,x_2\in Q$ there exist $a_1,a_2\in N$ such that
\begin{equation}
\label{eq:cpcocyc}
\omega(x_1,x_2)-\omega(x_2,x_1)=\partial _{a_1}(x_1)+\partial_{a_2}(x_2).
\end{equation}
Denote by $\ZCP^2(Q,N)$ the set of all CP cocycles in $\Z ^2(Q,N)$.

\begin{proposition}
\label{p:subgrp}
$\ZCP ^{2}(Q,N)$ is a subgroup of $\Z ^2(Q,N)$ containing $\B ^2(Q,N)$.
\end{proposition}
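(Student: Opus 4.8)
The plan is to verify the two claims — that $\ZCP^2(Q,N)$ is a subgroup of $\Z^2(Q,N)$, and that it contains $\B^2(Q,N)$ — directly from the defining condition \eqref{eq:cpcocyc}. The key observation to exploit is that, for a fixed commuting pair $x_1,x_2\in Q$, the right-hand side of \eqref{eq:cpcocyc} ranges over the subgroup
\begin{equation*}
D(x_1,x_2) = \{\partial_{a_1}(x_1)+\partial_{a_2}(x_2) : a_1,a_2\in N\}
\end{equation*}
of $N$, which is independent of $\omega$. Thus $\omega$ is a CP cocycle precisely when, for every commuting pair $(x_1,x_2)$, the ``commutator value'' $c_\omega(x_1,x_2) := \omega(x_1,x_2)-\omega(x_2,x_1)$ lies in $D(x_1,x_2)$. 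Since $\partial_a$ is additive in $a$ (as $\partial_{a+b} = \partial_a + \partial_b$, the derivations being $\mathbb{Z}$-linear in the point), $D(x_1,x_2)$ is genuinely a subgroup of $N$.

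First I would check closure under the group operation and inverses. Given $\omega, \omega' \in \ZCP^2(Q,N)$ and a commuting pair $(x_1,x_2)$, we have $c_{\omega-\omega'}(x_1,x_2) = c_\omega(x_1,x_2) - c_{\omega'}(x_1,x_2)$, and since each summand lies in the subgroup $D(x_1,x_2)$, so does the difference; hence $\omega - \omega' \in \ZCP^2(Q,N)$. This shows $\ZCP^2(Q,N)$ is a subgroup (it is nonempty, containing $0$, since $c_0 = 0 \in D(x_1,x_2)$). Second, I would show $\B^2(Q,N) \subseteq \ZCP^2(Q,N)$. A coboundary has the form $\omega = \delta f$ for some map $f\colon Q\to N$ with $f(1)=0$, namely $\omega(x,y) = xf(y) - f(xy) + f(x)$. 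A direct computation gives, for commuting $x_1, x_2$,
\begin{equation*}
c_\omega(x_1,x_2) = \bigl(x_1 f(x_2) - f(x_2)\bigr) - \bigl(x_2 f(x_1) - f(x_1)\bigr) = \partial_{f(x_2)}(x_1) - \partial_{f(x_1)}(x_2),
\end{equation*}
using $f(x_1x_2) = f(x_2x_1)$. Taking $a_1 = f(x_2)$ and $a_2 = -f(x_1)$ exhibits the required expression, so $\omega$ is a CP cocycle.

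The only mild obstacle is bookkeeping: one must be careful with the (possibly nonabelian, but here abelian) $Q$-action on $N$ written additively, and with the precise cocycle normalization convention from \cite{Bro82} so that the coboundary formula and the identity $\partial_a(g) = ga - a$ line up with signs in \eqref{eq:cpcocyc}. Once the ``commutator value'' map $c_\omega$ and the target subgroups $D(x_1,x_2)$ are set up, both assertions reduce to the additivity of $\omega \mapsto c_\omega$ and the additivity of $a \mapsto \partial_a$, which are routine. I do not expect any genuine difficulty here; the content of the proposition is essentially the remark that membership in $\ZCP^2(Q,N)$ is a ``linear'' condition on $\omega$.
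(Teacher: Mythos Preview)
Your proof is correct and follows essentially the same route as the paper: the paper dismisses the subgroup claim as ``clear'' (your framing via the subgroup $D(x_1,x_2)$ just makes this explicit), and for the coboundary inclusion it performs the identical computation, arriving at $\beta(x_1,x_2)-\beta(x_2,x_1)=\partial_{\phi(x_2)}(x_1)+\partial_{-\phi(x_1)}(x_2)$ with $\phi$ in place of your $f$.
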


\begin{proof}
It is clear that $\ZCP ^{2}(Q,N)$ is a subgroup of $\Z ^2(Q,N)$. Now let
$\beta\in\B ^2(Q,N)$. Then there exists a function $\phi :Q\to N$ such that
\[
\beta (x_1,x_2) = x_1\phi (x_2)-\phi (x_1x_2)+\phi (x_1)
\]
for all $x_1,x_2\in Q$. Suppose that these two elements commute. Then
$\beta(x_1,x_2)-\beta (x_2,x_1)=\partial_{\phi (x_2)}(x_1)+\partial_{-\phi(x_1)}(x_2)$,
hence $\beta\in\ZCP^2(Q,N)$.
\end{proof}

Now define $\HHCP^2(Q,N) = \ZCP^2(Q,N) / \B^2(Q,N)$. This is a subgroup of the
ordinary cohomology group $\HH^2(Q,N)$.

\begin{example}
\label{e:ext}
Let $Q$ be an abelian group and $N$ a trivial $Q$-module. Then $\HHCP ^2(Q,N)$
coincides with $\Ext (Q,N)$.
\end{example}

\begin{proposition}
\label{p:classext}
Let $N$ be a $Q$-module. Then the equivalence classes of CP extensions of $N$ by
$Q$ are in bijective correspondence with the elements of $\HHCP ^2(Q,N)$.
\end{proposition}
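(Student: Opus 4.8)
The plan is to build on the classical correspondence between $\HH^2(Q,N)$ and equivalence classes of extensions of $N$ by $Q$, and simply check that this bijection restricts to the claimed subsets. Recall that the classical bijection is set up as follows: given an extension $e=(\chi,G,\pi)$, one picks a set-theoretic section $s\colon Q\to G$ with $\pi s=\mathrm{id}_Q$ (normalized so $s(1)=1$), and defines a cocycle $\omega_e\in \Z^2(Q,N)$ by $\chi(\omega_e(x_1,x_2)) = s(x_1)s(x_2)s(x_1x_2)^{-1}$ (identifying $N$ with its image $\chi(N)\trianglelefteq G$, which carries the $Q$-module structure induced by conjugation via any lift). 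Changing the section changes $\omega_e$ by a coboundary, and equivalent extensions yield cohomologous cocycles, so $e\mapsto [\omega_e]$ is a well-defined injection into $\HH^2(Q,N)$; surjectivity is witnessed by the standard crossed-product construction $G=N\times Q$ with multiplication twisted by a representing cocycle.

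Given all of that, the real content is the following equivalence, which I would isolate as the key step: \emph{an extension $e$ is a CP extension if and only if some (equivalently, any) representing cocycle $\omega_e$ lies in $\ZCP^2(Q,N)$.} To prove it, fix commuting $x_1,x_2\in Q$ and lifts $g_i = \chi(a_i)s(x_i)$ for arbitrary $a_i\in N$; a direct computation of the commutator $[g_1,g_2]$ inside $G$, pushing all the $N$-part to one side using the module action, expresses $\chi^{-1}([g_1,g_2])$ as $\omega_e(x_1,x_2)-\omega_e(x_2,x_1)$ plus a correction term coming from the $a_i$, and that correction term is exactly $\partial_{a_1}(x_1)+\partial_{a_2}(x_2)$ up to sign (matching the normalization in \eqref{eq:cpcocyc}; here one uses that $[x_1,x_2]=1$ so that $s(x_1x_2)$ and $s(x_2x_1)$ agree and cancel). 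Hence $x_1,x_2$ have commuting lifts in $G$ precisely when $\omega_e(x_1,x_2)-\omega_e(x_2,x_1)$ can be written in the form on the right-hand side of \eqref{eq:cpcocyc}, which is the defining condition for the CP-cocycle property at the pair $(x_1,x_2)$. Ranging over all commuting pairs gives the claimed equivalence.

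With this in hand the proposition follows formally: the classical bijection $\HH^2(Q,N)\leftrightarrow\{\text{extensions}\}/\!\sim$ carries $\ZCP^2(Q,N)/\B^2(Q,N) = \HHCP^2(Q,N)$ onto the set of equivalence classes of CP extensions. One subtlety worth a sentence in the write-up: the CP-cocycle condition must be independent of the choice of representative in the coset modulo $\B^2(Q,N)$ — but this is already guaranteed, since Proposition~\ref{p:subgrp} shows $\B^2(Q,N)\subseteq\ZCP^2(Q,N)$ and $\ZCP^2(Q,N)$ is a subgroup, so membership in $\ZCP^2$ depends only on the class in $\HH^2(Q,N)$; alternatively it also follows a posteriori from Lemma~\ref{l:equiv} together with the cocycle characterization. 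I expect the only mildly delicate point to be bookkeeping the sign and side conventions in the commutator computation so that it matches \eqref{eq:cpcocyc} exactly — the conceptual structure of the argument is otherwise routine, being a restriction of the standard extension--cohomology dictionary.
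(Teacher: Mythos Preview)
Your proposal is correct and follows essentially the same approach as the paper. The paper first replaces $e$ by the equivalent crossed-product extension $Q[\omega]$ (invoking Lemma~\ref{l:equiv}) and then checks directly that lifts $(a,x),(b,y)$ commute in $Q[\omega]$ iff $\omega(x,y)-\omega(y,x)=\partial_a(y)+\partial_{-b}(x)$; your version performs the identical computation in $G$ via a section $s$ and handles well-definedness through Proposition~\ref{p:subgrp}, which is just a cosmetic reorganization of the same argument.
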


\begin{proof}
Let $e=(\chi, G,\pi )$ be an extension of $N$ by $Q$. Let $\omega :Q\times Q\to
N$ be a corresponding 2-cocycle. Then $e$ is equivalent to the extension
\begin{equation*}
\xymatrix{ 1\ar[r] & N\ar[r] & Q[\omega]\ar[r]^\epsilon & Q\ar[r] & 1},
\end{equation*}
where $Q[\omega]$ is, as a set, equal to $N\times Q$, and the operation is given
by $(a,x)(b,y)=(a+xb+\omega (x,y),xy)$, and $\epsilon (a,x)=x$. By Lemma
\ref{l:equiv} it suffices to show that the latter extension is CP if and only if
$\omega\in\ZCP^2(Q,N)$. Let $x,y\in Q$ commute and let $(a,x)$ and $(b,y)$ be
lifts of $x$ and $y$ in $Q[\omega ]$. Then $(a,x)$ and $(b,y)$ commute if and
only if $\omega (x,y)-\omega (y,x)=(y-1)a-(x-1)b=\partial _a(y)+\partial
_{-b}(x)$. Thus the existence of commuting lifts of $x$ and $y$ is equivalent to
$\omega\in \ZCP^2(Q,N)$.
\end{proof}

We now give some examples.

\begin{example}

Let $Q$ be a group in which for every commuting pair $x,y$ the subgroup $\langle
x, y \rangle$ is cyclic. This is equivalent to $Q$ having all abelian subgroups
cyclic. In the case of finite groups, it is known \cite[Theorem VI.9.5]{Bro82} that such groups are precisely
the groups with periodic cohomology, and this further amounts
to $Q$ having cyclic Sylow $p$-subgroups for $p$ odd, and cyclic or quaternion Sylow
$p$-subgroups for $p = 2$. Infinite groups with this property include free products
of cyclic groups, cf. \cite{Kar59}. Given such a group $Q$, it is clear that every commuting pair
of elements in $Q$ has a commuting lift. Thus every extension of $Q$ is CP,
and so $\HH^2(Q, N) = \HHCP^2(Q,N)$ for any $Q$-module $N$.
\end{example}

\begin{example}

Taking the simplest case $Q = C_p$ in the previous example, we see that every
extension of a group by $C_p$ is CP. Thus in particular, every finite $p$-group can be
viewed as being composed from sequence of CP extensions.

\end{example}

\begin{example}

There are many examples of extensions that are not CP. One may simply take as
$G$ a group of nilpotency class $2$ and factor by a subgroup generated by a non-trivial
commutator. In fact, in the case when the extension is central, it is more
difficult to find examples of extensions that are CP. We will focus on
inspecting central CP extensions in the following section. Consider now only
extensions that are not central. Some small examples of extensions which fail to
be CP are easily produced by taking a non-trivial action of a non-cyclic abelian
group on an elementary abelian group. We give a concrete example. Take $Q =
\langle x_1 \rangle \times \langle x_2 \rangle$ to be an elementary abelian
$p$-group of rank $2$, and let it act on $N = \langle a_1 \rangle \times \langle
a_2 \rangle \times \langle a_3 \rangle$, an elementary abelian $p$-group of rank
$3$, via the following rules:  \[ a_1^{x_1} = a_1, \; a_2^{x_1} = a_2, \;
a_3^{x_1} = a_3, \; a_1^{x_2} = a_2, \; a_2^{x_2} = a_1, \; a_3^{x_2} = a_3. \]
Thus $N$ is a $Q$-module. Now construct an extension $G$ corresponding to this
action by specifying $x_2^{x_1} = x_2 a_3$. This extension is not CP because the
commuting pair $x_1, x_2$ in $Q$ does not have a commuting lift in $G$.

\end{example}

%%%%%%%%%%%%%%%%%%%%%%%%%%%%%%%%%%%%%%%%%%%%%%%%%%%%%%%%%%%%%%%%%%%%%%%
\section{Central CP extensions}
\label{s:ccp}

From now on, we focus on a special type of CP extensions -- those with central
kernel. In terms of the cohomological interpretation, these correspond to the
case when the relevant module is trivial.

The fundamental result here is a CP version of the Universal Coefficient
Theorem. In other words, there exists a universal cohomological object that parametrizes
all central CP extensions. We show below that this object is the Bogomolov multiplier. 
Let us first recall its definition in more detail \cite{Mor12}. Given a group $Q$, let $Q\wedge Q$ be the group generated by the symbols $x\wedge y$, where $x,y\in Q$, subject to the following relations:
\begin{equation}
\label{eq:ext}
xy\wedge z = (x^y\wedge z^y)(y\wedge z), \quad 
x\wedge yz = (x\wedge z)(x^z\wedge y^z), \quad
x\wedge x = 1,
\end{equation}
where $x,y,z\in Q$. The group $Q\wedge Q$ is said to be the {\it non-abelian
exterior square} of $Q$, defined by Miller \cite{Mil52}. There is a surjective
homomorphism $Q\wedge Q\to [Q,Q]$ given by $x\wedge y\mapsto [x,y]$. Miller
\cite{Mil52} showed that the kernel $\M (Q)$ of this map is naturally isomorphic
to the Schur multiplier $\HH_2(Q,\mathbb{Z})$ of $Q$. Finally, denote
$\B_0(Q)=\M (Q)/\M _0(Q)$, where $\M _0(Q)=\langle x\wedge y\mid x,y\in Q,\,
[x,y]=1\rangle$; this is the {\em Bogomolov multiplier}.
One
can therefore consider $\B_0(Q)$ as the kernel of the induced commutator map
from the {\it non-abelian curly exterior square} $Q \curlywedge Q=(Q\wedge
Q)/\M_0(Q)$ to $[Q,Q]$.
It is shown in \cite{Mor12} that $\HH^2_{\rm
nr}(\mathbb{C}(V)^Q,\QZ )$ is naturally isomorphic to $\Hom (\B_0(Q),\QZ)$. 

\begin{theorem}
\label{t:uct2}
Let $N$ be a trivial $Q$-module. Then there is a split exact sequence
\begin{equation}
\label{eq:buct}
\xymatrix{0\ar[r] & \Ext (Q^{\rm ab},N)\ar[r]^\psi & 
	\HHCP^2(Q,N)\ar[r]^{\tilde{\varphi}} & \Hom(\Bt_0(Q),N)\ar[r] & 0},
\end{equation}
where the maps $\psi$ and $\tilde{\varphi}$ are induced by the Universal
Coefficient Theorem.
\end{theorem}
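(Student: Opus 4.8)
The plan is to deduce the CP version of the Universal Coefficient Theorem from the classical one by identifying how the subgroup $\HHCP^2(Q,N)$ sits inside $\HH^2(Q,N)$. Recall that for a trivial module $N$ the classical Universal Coefficient Theorem gives a (naturally) split short exact sequence
\begin{equation*}
\xymatrix{0\ar[r] & \Ext(Q^{\rm ab},N)\ar[r]^{\psi} & \HH^2(Q,N)\ar[r]^{\varphi} & \Hom(\M(Q),N)\ar[r] & 0},
\end{equation*}
where $\varphi$ is induced by the commutator (the map $\M(Q)\cong\HH_2(Q,\mathbb Z)\hookrightarrow Q\wedge Q$ and dualization). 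The first thing I would do is show that the image $\psi(\Ext(Q^{\rm ab},N))$ is contained in $\HHCP^2(Q,N)$; this is essentially Example~\ref{e:ext} together with inflation, since the extensions classified by $\Ext(Q^{\rm ab},N)$ are pulled back from abelian extensions of $Q^{\rm ab}$, and in an abelian quotient every commuting pair lifts to a commuting pair (indeed to powers of a single element after the abelian extension, which splits commuting behaviour). Concretely, a cocycle in the image of $\psi$ may be represented by one factoring through $Q^{\rm ab}\times Q^{\rm ab}$, and such a cocycle is symmetric up to a coboundary, so it trivially satisfies \eqref{eq:cpcocyc}.

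The heart of the argument is to identify $\tilde\varphi(\HHCP^2(Q,N))$ with $\Hom(\B_0(Q),N) = \Hom(\M(Q)/\M_0(Q),N)$ inside $\Hom(\M(Q),N)$. For this I would take $\omega\in\Z^2(Q,N)$ and compute $\varphi([\omega])$ explicitly as a homomorphism $\M(Q)\to N$, using the standard formula: on a generator $x\wedge y$ of $Q\wedge Q$ it sends (the class of) $x\wedge y$ to $\omega(x,y)-\omega(y,x)$ when $[x,y]$ is suitably accounted for — more precisely, one works with the Ganea-type map or simply with the fact that $\varphi([\omega])$ restricted to $\M(Q)\subseteq Q\wedge Q$ records the values $\omega(x,y)-\omega(y,x)$ on commuting pairs after subtracting the inner-derivation terms. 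The key observation is then: $\omega$ is a CP cocycle, i.e.\ \eqref{eq:cpcocyc} holds, \emph{if and only if} the induced homomorphism $\M(Q)\to N$ kills every generator $x\wedge y$ with $[x,y]=1$, i.e.\ kills $\M_0(Q)$, i.e.\ factors through $\B_0(Q)=\M(Q)/\M_0(Q)$. Indeed \eqref{eq:cpcocyc} says exactly that $\omega(x_1,x_2)-\omega(x_2,x_1)$ lies in the subgroup of $N$ generated by the inner-derivation values, which is precisely the condition for the corresponding element of $\Hom(\M(Q),N)$ to vanish on $x_1\wedge x_2$ once one passes to the quotient by $\M_0(Q)$. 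This gives a commutative diagram with exact rows
\begin{equation*}
\xymatrix{
0\ar[r] & \Ext(Q^{\rm ab},N)\ar[r]^{\psi}\ar@{=}[d] & \HHCP^2(Q,N)\ar[r]^{\tilde\varphi}\ar@{^{(}->}[d] & \Hom(\B_0(Q),N)\ar[r]\ar@{^{(}->}[d] & 0\\
0\ar[r] & \Ext(Q^{\rm ab},N)\ar[r]^{\psi} & \HH^2(Q,N)\ar[r]^{\varphi} & \Hom(\M(Q),N)\ar[r] & 0,
}
\end{equation*}
and a diagram chase shows the top row is a short exact sequence: exactness on the left is inherited from the bottom, exactness in the middle is the statement $\ker\tilde\varphi = \psi(\Ext(Q^{\rm ab},N))$ which follows since $\ker\varphi$ already lands in $\HHCP^2(Q,N)$, and surjectivity of $\tilde\varphi$ onto $\Hom(\B_0(Q),N)$ is what the "only if" direction above, run backwards, provides — given $f\colon\B_0(Q)\to N$, lift it to $\M(Q)\to N$, extend to an element of $\HH^2(Q,N)$ via the classical surjection $\varphi$, and check the chosen cocycle representative satisfies \eqref{eq:cpcocyc} because $f$ vanished on $\M_0(Q)$.

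Finally, splitting: the classical sequence splits, and I would like the splitting $\sigma\colon\Hom(\M(Q),N)\to\HH^2(Q,N)$ to restrict to a splitting of the top row. This works provided $\sigma$ can be chosen so that $\sigma(\Hom(\B_0(Q),N))\subseteq\HHCP^2(Q,N)$, where we regard $\Hom(\B_0(Q),N)\subseteq\Hom(\M(Q),N)$. This is exactly the surjectivity construction in the previous paragraph made functorial: the standard splitting of the UCT sends a homomorphism $\M(Q)\to N$ to a specific cocycle whose antisymmetrization realizes that homomorphism, and if the homomorphism kills $\M_0(Q)$ then that cocycle satisfies \eqref{eq:cpcocyc}. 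The main obstacle I anticipate is bookkeeping the precise cocycle-level formula for the classical UCT map $\varphi$ and its splitting — pinning down the correct normalization so that "antisymmetrization of $\omega$ on commuting pairs, modulo inner derivations" corresponds cleanly to "vanishing on $\M_0(Q)$" — but once that dictionary is fixed the rest is a diagram chase. I would phrase the whole proof as: (i) recall the classical UCT with its explicit maps; (ii) prove the CP cocycle condition $\Leftrightarrow$ the associated homomorphism factors through $\B_0(Q)$; (iii) assemble the commutative diagram and chase; (iv) check the classical splitting restricts.
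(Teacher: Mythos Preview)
Your proposal is correct and follows essentially the same route as the paper: start from the classical UCT sequence, check that $\psi$ lands in $\HHCP^2(Q,N)$, prove the key equivalence ``$[\omega]\in\HHCP^2(Q,N)$ iff $\varphi([\omega])$ kills $\M_0(Q)$'' (the paper phrases the $\Rightarrow$ direction via commuting lifts in the associated extension, but the content is the same), and then read off exactness and splitting by restriction from the classical sequence. One small simplification you can make: since $N$ is a \emph{trivial} $Q$-module, every inner derivation $\partial_a$ is identically zero, so condition~\eqref{eq:cpcocyc} reduces to $\omega(x_1,x_2)=\omega(x_2,x_1)$ on commuting pairs, which makes the dictionary with ``vanishing on $\M_0(Q)$'' immediate and removes the bookkeeping you flagged as the main obstacle.
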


\begin{proof}
By the Universal Coefficient Theorem, we have a split exact sequence
\begin{equation}
\label{eq:uct}
\xymatrix{0\ar[r] & \Ext (Q^{\rm ab},N)\ar[r]^\psi & 
	\HH^2(Q,N)\ar[r]^{\varphi} & \Hom(\M(Q),N)\ar[r] & 0}.
\end{equation}
Let $[\omega]$ belong to $\Ext (Q^{\rm ab},N)$. Then \cite{Bey82} the map $\psi$
can be described as $\psi([\omega])=[\omega\circ ({\rm ab}\times {\rm ab})]$,
where ${\rm ab}:Q\to Q^{\rm ab}$. If $x,y\in Q$ commute, then
$\psi([\omega])(x,y)=\omega (x[Q,Q],y[Q,Q])=\omega (y[Q,Q],x[Q,Q])=\psi([\omega])(y,x)$,
therefore $\psi$ maps the group $\Ext (Q^{\rm ab},N)$ into $\HHCP^2(Q,N)$. The map
$\varphi$ can be described as follows. Suppose that $[\omega]\in\HH^2(Q,N)$
represents a central extension
\begin{equation}
\label{eq:cent}
\xymatrix{0\ar[r] & N\ar[r] & \tilde{Q}\ar[r]^\pi & Q\ar[r] & 1}.
\end{equation}
Let $z=\prod_i(x_i\wedge y_i)\in\M (Q)$, that is, $\prod_i[x_i,y_i]=1$. Choose
$\tilde{x}_i,\tilde{y}_i\in\tilde{Q}$ such that $\pi(\tilde{x}_i)=x_i$ and $\pi
(\tilde{y}_i)=y_i$. Define $\tilde{z}=\prod_i[\tilde{x}_i,\tilde{y}_i]$. Clearly
$\tilde{z}\in N$, and it can be verified that the map $\varphi$ is well defined
by the rule $\varphi ([\omega ])=(z\mapsto\tilde{z})$.

Suppose now that $[\omega]$ belongs to $\HHCP^2(Q,N)$. Let $z$ belong
to $\M_0(Q)$. Then $z$ can be written as $z=\prod_i(x_i\wedge y_i)$, where
$[x_i,y_i]=1$ for all $i$. Since the extension \eqref{eq:cent} is a central CP
extension, we can choose commuting lifts $(\tilde{x}_i,\tilde{y}_i)$ of the
commuting pairs $(x_i,y_i)$. By the above definition, $\tilde{z}=0$, hence
$\varphi$ is trivial when restricted to $\M_0(Q)$. Thus $\varphi$ induces an
epimorphism $\tilde{\varphi}:\HHCP^2(Q,N)\to \Hom (\Bt_0(Q),N)$ such that the
following diagram commutes:
\begin{equation*}
\xymatrix{\HH^2(Q,N)\ar[r]^\varphi & \Hom(\M(Q),N)\\
\HHCP^2(Q,N)\ar[r]^{\tilde{\varphi}}\ar[u]^\iota & \Hom (\Bt_0(Q),A)\ar[u]_{\rho^*}
}
\end{equation*}

Here the map $\rho^*$ is induced by the canonical epimorphism 
$\rho :\M(Q)\to\Bt_0(Q)$. Therefore it follows that
$\ker\tilde{\varphi}=\ker\varphi|_{\im\iota}=\im\psi$. This shows that the sequence \eqref{eq:buct} is exact. Furthermore, the splitting of the sequence \eqref{eq:uct} yields that the sequence \eqref{eq:buct} is also split. This proves the result.
\end{proof}

We offer a sample application the above theorem.  Recall that Schur's theory of
covering groups originally arised in the context of projective representations,
cf. \cite{Sch07}. Schur showed that there is a natural correspondence between
the elements of $\HH^2(Q,\CC^\times)$ and projective representations of $Q$. To
every projective representation $\rho \colon Q \to \GL(V)$ one can associate a
cocycle $\alpha \in \Z^2(Q, \CC^\times)$ via the rule $\rho(x) \rho(y) =
\alpha(x,y) \rho(xy)$ for every $x,y \in Q$. Projectively equivalent
representations induce cohomologous cocycles, and a cocycle is a coboundary if
and only if the representation is equivalent to a linear representation.
It is readily verified that CP extensions integrate well into this setting.

\begin{proposition}

Projective representations $\rho \colon Q \to \GL(V)$ with the property that
$[\rho(x_1), \rho(x_2)] = 1$ whenever $[x_1,x_2] = 1$ correspond to
cohomological classes of CP cocycles $\alpha \in \Z^2(Q, \CC^\times)$, i.e.,
elements of $\B_0(Q)$.

\end{proposition}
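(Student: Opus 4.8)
The plan is to combine Schur's classical correspondence between projective representations and $\HH^2(Q, \CC^\times)$ with the cohomological characterization of CP extensions from Proposition \ref{p:classext}, and then identify the resulting group with $\B_0(Q)$ via Theorem \ref{t:uct2}. First I would recall the standard dictionary: a projective representation $\rho \colon Q \to \GL(V)$ determines a cocycle $\alpha \in \Z^2(Q, \CC^\times)$ through $\rho(x)\rho(y) = \alpha(x,y)\rho(xy)$, well defined up to coboundary once we pass to projective equivalence classes. The key observation is then to translate the condition $[\rho(x_1), \rho(x_2)] = 1$ into a condition on $\alpha$: computing $\rho(x_1)\rho(x_2)\rho(x_1)^{-1}\rho(x_2)^{-1}$ using the defining relation, and using that $x_1 x_2 = x_2 x_1$ in $Q$, one finds that $[\rho(x_1), \rho(x_2)] = \alpha(x_1,x_2)\alpha(x_2,x_1)^{-1} \cdot \operatorname{id}_V$ (after the $\rho(x_1 x_2)$ and $\rho(x_2 x_1)$ terms cancel). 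Hence $[\rho(x_1), \rho(x_2)] = 1$ for a commuting pair is equivalent to $\alpha(x_1, x_2) = \alpha(x_2, x_1)$; comparing with the defining condition \eqref{eq:cpcocyc} for a CP cocycle (with $N = \CC^\times$ written multiplicatively, where the inner derivations $\partial_{a_i}$ vanish since the module is trivial), this says precisely that $\alpha \in \ZCP^2(Q, \CC^\times)$.

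Next I would assemble these pieces: projective representations with the stated commutativity-preserving property correspond, up to projective equivalence, exactly to classes in $\HHCP^2(Q, \CC^\times)$. It remains to identify this group with $\B_0(Q)$. For this I would invoke Theorem \ref{t:uct2} with $N = \CC^\times$: the split exact sequence gives $\HHCP^2(Q, \CC^\times) \cong \Ext(Q^{\rm ab}, \CC^\times) \oplus \Hom(\B_0(Q), \CC^\times)$. Since $\CC^\times$ is a divisible (hence injective) abelian group, $\Ext(Q^{\rm ab}, \CC^\times) = 0$, so $\HHCP^2(Q, \CC^\times) \cong \Hom(\B_0(Q), \CC^\times)$, which for finite $Q$ is (noncanonically) isomorphic to $\B_0(Q)$ itself by Pontryagin duality. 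This is the sense in which the correspondence lands in $\B_0(Q)$.

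The main obstacle — really the only nontrivial point — is the bookkeeping in the commutator computation: one must be careful that $\rho(xy)$ is only well defined up to scalar, so the cleanest route is to fix a set-theoretic lift and track the scalars explicitly, verifying that the ambiguity cancels in the commutator. A secondary subtlety is making sure the CP-cocycle condition is read correctly in multiplicative notation for the module $\CC^\times$ and that trivial action kills the derivation terms on the right-hand side of \eqref{eq:cpcocyc}; once that is observed, the equivalence with $\alpha(x_1,x_2) = \alpha(x_2,x_1)$ on commuting pairs is immediate. Everything else is a direct appeal to Schur's theory and to Theorem \ref{t:uct2}, so the proof is short.

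\begin{proof}
Recall that to a projective representation $\rho \colon Q \to \GL(V)$ one associates, after choosing $\rho$ within its projective equivalence class, a cocycle $\alpha \in \Z^2(Q, \CC^\times)$ via $\rho(x)\rho(y) = \alpha(x,y)\rho(xy)$, and that the class $[\alpha] \in \HH^2(Q,\CC^\times)$ depends only on the projective equivalence class of $\rho$. Fix a commuting pair $x_1, x_2 \in Q$. Using the defining relation twice and the equality $x_1 x_2 = x_2 x_1$ in $Q$, we compute
\begin{equation*}
\rho(x_1)\rho(x_2) = \alpha(x_1,x_2)\rho(x_1 x_2) = \alpha(x_1,x_2)\rho(x_2 x_1) = \frac{\alpha(x_1,x_2)}{\alpha(x_2,x_1)}\,\rho(x_2)\rho(x_1),
\end{equation*}
so that $[\rho(x_1),\rho(x_2)] = \bigl(\alpha(x_1,x_2)\alpha(x_2,x_1)^{-1}\bigr)\operatorname{id}_V$. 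Hence $[\rho(x_1),\rho(x_2)] = 1$ if and only if $\alpha(x_1,x_2) = \alpha(x_2,x_1)$. Since $\CC^\times$ is a trivial $Q$-module, written multiplicatively the defining condition \eqref{eq:cpcocyc} for a CP cocycle reads $\alpha(x_1,x_2)\alpha(x_2,x_1)^{-1} = 1$ for every commuting pair, the derivation terms being trivial. Therefore $\rho$ has the property that $[\rho(x_1),\rho(x_2)] = 1$ whenever $[x_1,x_2] = 1$ precisely when $\alpha \in \ZCP^2(Q,\CC^\times)$, and such representations correspond, up to projective equivalence, to the elements of $\HHCP^2(Q,\CC^\times)$.

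It remains to identify this group. Applying Theorem \ref{t:uct2} with $N = \CC^\times$ gives a split exact sequence
\begin{equation*}
\xymatrix{0\ar[r] & \Ext(Q^{\rm ab},\CC^\times)\ar[r] & \HHCP^2(Q,\CC^\times)\ar[r] & \Hom(\B_0(Q),\CC^\times)\ar[r] & 0}.
\end{equation*}
As $\CC^\times$ is divisible, hence an injective abelian group, $\Ext(Q^{\rm ab},\CC^\times) = 0$, so $\HHCP^2(Q,\CC^\times) \cong \Hom(\B_0(Q),\CC^\times)$, which is isomorphic to $\B_0(Q)$ by Pontryagin duality. This establishes the claimed correspondence.
\end{proof}
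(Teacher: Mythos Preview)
Your argument is correct and is precisely the verification the paper has in mind: it states the proposition immediately after recalling Schur's dictionary and says only that ``it is readily verified that CP extensions integrate well into this setting,'' leaving the commutator computation and the identification $\HHCP^2(Q,\CC^\times)\cong\B_0(Q)$ via Theorem~\ref{t:uct2} and divisibility of $\CC^\times$ implicit. You have simply written out those details, so there is nothing to add.
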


In particular, if $\B_0(Q)$ is trivial, every projective representation of $Q$
which preserves commutativity is similar to a linear representation. Such maps
have been studied in detail in other algebraic structures, see \cite{Sem08} for a survey. 
A loose connection can be made along the following lines. Let $\rho \colon Q \to
S$ be a set-theoretical map from $Q$ to a group $S$ such that $\rho(1) = 1$ and
the induced map $\rho \colon Q \to S/Z(S)$ is a homomorphism. We may thus write
$\rho(x)\rho(y) = \alpha(x,y) \rho(xy)$ for some function $\alpha \colon Q
\times Q \to Z(S)$. In view of the associativity of multiplication, $\alpha$ is
in fact a $Z(S)$-valued $2$-cocycle. As above, such maps $\rho$ correspond to
elements of $\HHCP^2(Q, Z(S))$.

Next, we give a simple criterion for determining whether or not a given central
extension is CP. This result will later be used repeatedly.

\begin{proposition}
\label{p:cp}
Let
\begin{equation*}
e: \,\, \xymatrix{ 1\ar[r] & N\ar[r]^{\chi} & G\ar[r]^{\pi} & Q\ar[r] & 1}
\end{equation*}
be a central extension. Then $e$ is a CP extension if and only if $\chi (N)\cap \K (G)=1$.
\end{proposition}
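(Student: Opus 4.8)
The plan is to isolate one elementary fact about central extensions and then read off both implications from it. The fact is \emph{lift-independence}: if $g_i,g_i'\in G$ both project to $x_i\in Q$ under $\pi$, then $g_i'=g_i\chi(n_i)$ for some $n_i\in N$, and since $\chi(N)$ is central in $G$ the factors $\chi(n_i)$ drop out of the commutator, so $[g_1',g_2']=[g_1,g_2]$. Moreover, for any lifts $g_1,g_2$ of elements $x_1,x_2\in Q$ one has $\pi([g_1,g_2])=[x_1,x_2]$; hence if $x_1,x_2$ commute then $[g_1,g_2]\in\ker\pi=\chi(N)$, and by lift-independence this element depends only on the pair $(x_1,x_2)$. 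So a commuting pair of $Q$ has commuting lifts in $G$ precisely when this well-defined element of $\chi(N)$ is trivial, and therefore $e$ is a CP extension exactly when, for every commuting pair of $Q$, this element of $\chi(N)$ is trivial.

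With this in hand I would argue as follows, using that $\K(G)=\{[x,y]\mid x,y\in G\}$. For the backward direction, assume $\chi(N)\cap\K(G)=1$ and take a commuting pair $x_1,x_2\in Q$ with arbitrary lifts $g_1,g_2$; then $[g_1,g_2]$ is at once a commutator, so it lies in $\K(G)$, and an element of $\chi(N)$ by the observation above, so $[g_1,g_2]\in\chi(N)\cap\K(G)=1$ and the lifts already commute, whence $e$ is CP. For the forward direction, assume $e$ is CP and take $z\in\chi(N)\cap\K(G)$, say $z=[g_1,g_2]$; since $z\in\ker\pi$ the images $x_i=\pi(g_i)$ commute in $Q$, so by hypothesis they admit commuting lifts $h_1,h_2$, and lift-independence gives $z=[g_1,g_2]=[h_1,h_2]=1$. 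Hence $\chi(N)\cap\K(G)=1$.

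I do not anticipate a real obstacle: everything hinges on $\chi(N)$ being central, and the only point requiring care is to record lift-independence cleanly up front so that it can be quoted verbatim in both directions. Accordingly I would phrase it once as a preliminary remark and then give the two short implications above.
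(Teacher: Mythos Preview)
Your proof is correct and follows essentially the same route as the paper: both directions use that for a central extension the commutator $[g_1,g_2]$ depends only on the images $\pi(g_1),\pi(g_2)$, and both pick an element of $\chi(N)\cap\K(G)$ written as a single commutator to run the converse. The only difference is cosmetic—you isolate lift-independence as a named preliminary, whereas the paper invokes it inline via the computation $[ga,hb]=[g,h]$ for $a,b\in\chi(N)$.
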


\begin{proof}

Denote $M=\chi(N)$. Suppose that $M\cap \K(G)=1$. Choose $x,y\in Q$ with
$[x,y]=1$. We have $x=\pi (g)$ and $y=\pi (h)$ for some $g,h\in G$. Then $\pi
([g,h])=1$, hence $[g,h]\in M\cap \K(G)=1$. Thus $g$ and $h$ are commuting lifts
of $x$ and $y$, respectively.

Conversely, suppose that $e$ is a CP central extension. Choose $[g,h]\in M\cap
K(G)$. By assumption, there exists a commuting lift $(g_1,h_1)\in G\times G$ of
the commuting pair $(\pi (g),\pi (h))$. We can thus write $g_1=ga$, $h_1=hb$,
where $a,b\in M$. It follows that $1=[g_1,h_1]=[ga,hb]=[g,h]$, hence $M$ is a CP
subgroup of $G$.
\end{proof}

It is clear from the proof above that the implication from right to left also
holds for non-central extensions. In the general case, however, the equivalence
fails. For example, when $Q$ is a cyclic group and $G$ non-abelian, we certainly
have $\chi(N) \cap \K(G) = \K(G) > 1$, and the extension is CP.

We proceed with some further characterizations of central CP extensions. 
We say that a normal abelian subgroup $N$ of a group $G$ is a {\it CP subgroup}
of $G$ if the extension
\[
\xymatrix{ 1\ar[r] & N\ar[r] & G\ar[r] & G/N\ar[r] & 1}
\]
is a CP extension. In the case when $N$ is central in $G$, Proposition \ref{p:cp} implies that $N$ is a CP subgroup if and only $N\cap \K(G)=1$.
The following lemma will be needed.

\begin{lemma}
\label{l:b0cp}
Let $N$ be a central CP subgroup of $G$. Then the sequences
\[
0\longrightarrow \Bt_0(G)\longrightarrow \Bt_0(G/N)\longrightarrow N\cap G'\longrightarrow 0
\]
and
\[
N\otimes G^{\rm ab}\to \M_0(G)\to\M_0(G/N)\to 0
\]
are exact.
\end{lemma}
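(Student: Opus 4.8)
The plan is to exploit the five-term exact sequence (inflation–restriction in the exterior-square setting) that arises for any central extension, and then use the CP hypothesis via Proposition \ref{p:cp} to control the relevant connecting maps. Recall that for a group $G$ with central subgroup $N$ and quotient $\bar{G}=G/N$, the Schur multiplier fits into the classical exact sequence
\begin{equation*}
\HH_2(G,\mathbb Z)\longrightarrow \HH_2(\bar{G},\mathbb Z)\longrightarrow N\cap G'\longrightarrow 0,
\end{equation*}
which in the exterior-square picture comes from applying the functor $-\wedge-$ to $1\to N\to G\to \bar{G}\to 1$ and tracking kernels of the commutator maps. The first thing I would do is set up the analogous diagram with the \emph{curly} exterior squares $G\cwedge G$ and $\bar{G}\cwedge \bar{G}$, together with the commutator maps onto $G'$ and $\bar{G}'=G'/(N\cap G')$. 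Since $N$ is central, the natural map $G\cwedge G\to \bar{G}\cwedge\bar{G}$ is surjective (generators lift), and its behaviour on the kernels $\Bt_0(G)$ and $\Bt_0(\bar{G})$ is what the first sequence records.

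For the first sequence, the surjectivity on the right, $\Bt_0(\bar{G})\twoheadrightarrow N\cap G'$, should come by restricting the classical surjection $\M(\bar{G})\to N\cap G'$ (which sends a lift of a vanishing product of commutators to its actual product in $G$) to $\M_0$; I must check this lands where claimed, i.e. that $\M_0(\bar G)$ already surjects onto $N\cap G'$. This is exactly where the CP hypothesis is \emph{not} used — rather, it is used for the exactness and injectivity on the left. Precisely, I claim $\ker\bigl(\Bt_0(\bar G)\to N\cap G'\bigr)$ is the image of $\Bt_0(G)$, and that this map $\Bt_0(G)\to\Bt_0(\bar G)$ is injective. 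Injectivity is the crux: an element $\prod_i(x_i\wedge y_i)\in\M_0(G)$ mapping to $0$ in $\bar G\cwedge\bar G$ would, by the classical sequence for curly exterior squares, lie in the image of $N\otimes G^{\rm ab}$ (this is the content of the second displayed sequence, which I would prove first), and I need to show such elements are already trivial in $\Bt_0(G)$. Here Proposition \ref{p:cp} enters: $N$ being a central CP subgroup means $N\cap \K(G)=1$, i.e. $N$ meets the "commuting part" trivially, and this forces the relevant $N\otimes G^{\rm ab}$–contribution inside $\M_0(G)$ to vanish — more carefully, one shows the composite $N\otimes G^{\rm ab}\to \M_0(G)\to\Bt_0(G)$ is zero precisely because elements $n\wedge g$ with $n\in N$ central commute, so $n\wedge g\in\M_0(G)$ maps to $[n,g]=1$ and lies in the span defining $\Bt_0$.

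Concretely I would proceed in the following order. (1) Establish the second (three-term) sequence $N\otimes G^{\rm ab}\to \M_0(G)\to\M_0(\bar G)\to 0$: surjectivity on the right is generator-lifting; exactness in the middle follows by comparing with the known presentation-theoretic exact sequence for $M(G)\to M(\bar G)$ and quotienting by the $\M_0$-relations, checking that the connecting map from $N\otimes G^{\rm ab}$ (sending $n\otimes \bar g\mapsto n\wedge g$) has the stated image. I do not expect to need the CP hypothesis here at all — this sequence holds for any central $N$. (2) Derive the first sequence by a diagram chase in
\begin{equation*}
\xymatrix@C=1.2pc{
 & \Bt_0(G)\ar[r]\ar[d] & G\cwedge G\ar[r]\ar[d] & G'\ar[r]\ar[d] & 1\\
 & \Bt_0(\bar G)\ar[r] & \bar G\cwedge \bar G\ar[r] & \bar G'\ar[r] & 1
}
\end{equation*}
together with the snake lemma applied to the vertical maps, whose kernels/cokernels feed in the data: the cokernel of $G'\to\bar G'$ is $0$, its kernel is $N\cap G'$, and the surjectivity $G\cwedge G\to\bar G\cwedge\bar G$ kills one connecting term. (3) Feed in the CP hypothesis to upgrade "the kernel of $\Bt_0(G)\to\Bt_0(\bar G)$ is a quotient of $N\otimes G^{\rm ab}$" to "it is zero", using Proposition \ref{p:cp} as indicated above. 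The main obstacle I anticipate is bookkeeping in step (2)–(3): making the snake-lemma connecting homomorphism land in the right spot and identifying it with the map $N\otimes G^{\rm ab}\to\M_0(G)$ from step (1), so that the CP condition $N\cap\K(G)=1$ translates cleanly into vanishing. The exactness statements themselves are formal once the maps are correctly identified; pinning down those identifications compatibly across both exterior-square layers is the delicate part.
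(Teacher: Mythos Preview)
Your overall strategy---setting up the curly-exterior-square diagram and running the snake lemma---is viable, but you have misidentified where the CP hypothesis enters, and this causes a genuine gap. The key error is the claim that the second sequence $N\otimes G^{\rm ab}\to \M_0(G)\to\M_0(\bar G)\to 0$ ``holds for any central $N$'' with surjectivity on the right coming from ``generator-lifting''. A generator of $\M_0(\bar G)$ has the form $\bar x\wedge \bar y$ with $[\bar x,\bar y]=1$ in $\bar G$; lifting it to an element of $\M_0(G)$ means producing $x',y'\in G$ over $\bar x,\bar y$ with $[x',y']=1$, which is \emph{precisely} the CP condition and fails in general. Indeed, in the proposition immediately following this lemma the paper shows that surjectivity of $\M_0(G)\to\M_0(\bar G)$ is \emph{equivalent} to $N$ being a central CP subgroup.

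This misattribution then undermines your argument for injectivity of $\Bt_0(G)\to\Bt_0(\bar G)$. You say a class in the kernel, represented by some $\alpha\in\M(G)$, ``would, by the classical sequence, lie in the image of $N\otimes G^{\rm ab}$''; but the Ganea sequence only gives that conclusion when $\alpha$ maps to \emph{zero} in $\M(\bar G)$, whereas a priori it only maps into $\M_0(\bar G)$. Bridging this gap requires first adjusting $\alpha$ by something in $\M_0(G)$ that hits its image in $\M_0(\bar G)$---i.e.\ exactly the surjectivity you dismissed. There is a similar confusion around the map $\Bt_0(\bar G)\to N\cap G'$: what must be checked is that $\M_0(\bar G)$ lies in the \emph{kernel} of $\M(\bar G)\to N\cap G'$ (so that the map descends to the quotient $\Bt_0(\bar G)$), not that it surjects; and this kernel condition is again $N\cap\K(G)=1$.

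By contrast, the paper works directly with free presentations $G=F/R$, $N=S/R$. The CP hypothesis becomes the single identity $\langle\K(F)\cap S\rangle=\langle\K(F)\cap R\rangle$, and both sequences then drop out of the Hopf-type descriptions $\Bt_0(G)=(F'\cap R)/\langle\K(F)\cap R\rangle$, $\M_0(G)=\langle\K(F)\cap R\rangle/[F,R]$ (and their analogues for $G/N$) with no diagram chase at all. Your approach can be repaired: invoke CP to get $\M_0(G)\to\M_0(\bar G)$ surjective (equivalently, $G\cwedge G\to\bar G\cwedge\bar G$ an isomorphism) \emph{first}, and then the first exact sequence follows cleanly from the snake lemma.
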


\begin{proof}
Let $G$ and $N$ be given via free presentations, that is, $G=F/R$ and $N=S/R$.
The fact that $N$ is a central CP subgroup of $G$ is then equivalent to $\langle
\K(F)\cap S\rangle \le R$.  This immediately implies that $\langle \K(F)\cap
S\rangle =\langle \K(F)\cap R\rangle$. With the above identifications and Hopf's formula for the Bogomolov multiplier \cite{Mor12} we have
that  $\Bt _0(G)=(F'\cap R)/\langle\K(F)\cap R\rangle$, $\Bt_0(G/N)=(F'\cap
S)/\langle \K(F)\cap S\rangle$, $\M_0(G)=\langle \K(F)\cap R\rangle/[F,R]$, and
$\M_0(G/N)=\langle \K(F)\cap S\rangle/[F,S]$. By \cite[p. 41]{Bey82} there is a Ganea map $N\otimes G^{\rm ab}\to \M(G)$ whose image 
can be identified with $[F,S]/[F,R]$.
As $[F,S]\le\langle \K(F)\cap R\rangle$, the Ganea map actually maps $N\otimes
G^{\rm ab}$ into $\M_0(G)$. The rest of the proof is now straightforward.
\end{proof}

\begin{proposition}
\label{p:cpm0}
Let $N$ be a central subgroup of a group $G$. The following are equivalent:
\begin{enumerate}
\item[(a)] $N$ is a CP subgroup of $G$,
\item[(b)] The canonical map $\M_0(G)\to\M_0(G/N)$ is surjective.
\item[(c)] The canonical map $\varphi:G\cwedge G\to G/N\cwedge G/N$ is an isomorphism.
\end{enumerate}
\end{proposition}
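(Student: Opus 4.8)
The plan is to prove the cycle of implications (a) $\Rightarrow$ (b) $\Rightarrow$ (c) $\Rightarrow$ (a). The implication (a) $\Rightarrow$ (b) is essentially immediate from Lemma~\ref{l:b0cp}: if $N$ is a central CP subgroup, then the second exact sequence in that lemma asserts precisely that the canonical map $\M_0(G) \to \M_0(G/N)$ is surjective. So this step requires nothing beyond quoting the lemma.

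For (b) $\Rightarrow$ (c), recall that $G \cwedge G = (G \wedge G)/\M_0(G)$ and similarly for $G/N$, and that there is a natural commutator map $G \cwedge G \to G'$ with kernel $\Bt_0(G)$, and likewise $G/N \cwedge G/N \to (G/N)'$ with kernel $\Bt_0(G/N)$. The canonical map $\varphi$ is surjective because $G \wedge G \to (G/N) \wedge (G/N)$ is surjective on generators $x \wedge y$. For injectivity I would set up the commutative diagram with rows $1 \to \Bt_0(G) \to G \cwedge G \to G' \to 1$ and $1 \to \Bt_0(G/N) \to G/N \cwedge G/N \to (G/N)' \to 1$, connected by the vertical maps induced by $G \to G/N$. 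Since $N$ is central, $N \cap G'$ maps to $1$ in $(G/N)'$ and the right-hand vertical map $G' \to (G/N)'$ has kernel exactly $N \cap G'$; the left-hand vertical map is the map $\Bt_0(G) \to \Bt_0(G/N)$ appearing in the first sequence of Lemma~\ref{l:b0cp}. A diagram chase then shows $\ker \varphi$ surjects onto $N \cap G'$ with kernel the cokernel of... — more carefully, applying the snake lemma to this diagram yields an exact sequence relating $\ker \varphi$, $N \cap G'$, and the map on Bogomolov multipliers. The key point is that surjectivity of $\M_0(G) \to \M_0(G/N)$ forces, via Lemma~\ref{l:b0cp}, the sequence $0 \to \Bt_0(G) \to \Bt_0(G/N) \to N \cap G' \to 0$ to be exact, and feeding this into the snake lemma collapses $\ker \varphi$ to $0$. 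I would want to check that the connecting homomorphism in the snake lemma is precisely the boundary appearing in Lemma~\ref{l:b0cp}; this compatibility is where I expect the main bookkeeping effort to lie.

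For (c) $\Rightarrow$ (a), I would argue contrapositively or directly: if $\varphi : G \cwedge G \to G/N \cwedge G/N$ is an isomorphism, then in particular it is injective, so by the diagram above $N \cap G' $ must be trivial — indeed any nontrivial element of $N \cap G'$ would, together with exactness considerations, produce a nontrivial kernel. Actually the cleanest route is: $\varphi$ injective implies $\Bt_0(G) \to \Bt_0(G/N)$ injective and $N \cap G' = 1$. But $N \cap G' = 1$ together with $N$ central gives $N \cap \K(G) \subseteq N \cap G' = 1$, and by the remark following Proposition~\ref{p:cp}, $N$ central with $N \cap \K(G) = 1$ is exactly the condition that $N$ is a CP subgroup. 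Here one should be slightly careful: $\K(G)$ need not be contained in $G'$ in general, but $\K(G) \subseteq G'$ always holds since every element of $\K(G)$ is a commutator, so $N \cap \K(G) \subseteq N \cap G'$, which is what we need.

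The main obstacle I anticipate is making the snake-lemma argument in (b) $\Rightarrow$ (c) fully rigorous: one must verify that the three vertical maps assemble into a genuine morphism of short exact sequences and that the induced connecting map matches the boundary map $\Bt_0(G/N) \to N \cap G'$ of Lemma~\ref{l:b0cp}. Once that identification is in place, the exactness statement from Lemma~\ref{l:b0cp} does all the work, and the remaining implications are short. An alternative that sidesteps some diagram-chasing would be to work directly with free presentations $G = F/R$, $N = S/R$ as in the proof of Lemma~\ref{l:b0cp}, expressing $G \cwedge G \cong F'/[F,R]\,\langle \K(F)\cap R\rangle$-type quotients and checking the kernel of $\varphi$ by hand; I would keep this in reserve in case the categorical argument gets unwieldy.
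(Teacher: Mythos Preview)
Your cycle (a) $\Rightarrow$ (b) $\Rightarrow$ (c) $\Rightarrow$ (a) has two genuine gaps.

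\textbf{In (b) $\Rightarrow$ (c) you invoke Lemma~\ref{l:b0cp} circularly.} That lemma is stated under the hypothesis that $N$ is a CP subgroup, i.e.\ under (a). Saying ``surjectivity of $\M_0(G)\to\M_0(G/N)$ forces, via Lemma~\ref{l:b0cp}, the sequence $0\to\Bt_0(G)\to\Bt_0(G/N)\to N\cap G'\to 0$ to be exact'' is using (a) to pass from (b) to (c). What actually makes this work is the free-presentation identification the paper uses (and which you mention only as a fallback): with $G=F/R$, $N=S/R$, the image of $\M_0(G)\to\M_0(G/N)$ is $\langle\K(F)\cap R\rangle/[F,S]$, so (b) is equivalent to $\langle\K(F)\cap R\rangle=\langle\K(F)\cap S\rangle$, which immediately gives $\langle\K(F)\cap S\rangle\le R$, i.e.\ (a). In other words, (b) $\Rightarrow$ (a) is the natural step, and then (c) follows; trying to go straight to (c) through the snake lemma without first establishing (a) leaves you with nothing to feed in.

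\textbf{In (c) $\Rightarrow$ (a) the claim ``$\varphi$ injective implies $N\cap G'=1$'' is false.} The snake lemma applied to your two rows gives, when $\varphi$ is an isomorphism, only $N\cap G'\cong\operatorname{coker}\bigl(\Bt_0(G)\to\Bt_0(G/N)\bigr)$, not $N\cap G'=1$. For a concrete counterexample, take $G$ to be a CP cover of a group $Q$ with $\Bt_0(Q)\neq 0$ and let $N\cong\Bt_0(Q)$ be the kernel: then $N\le[G,G]$, so $N\cap G'=N\neq 1$, yet (a) holds and hence (c) holds. The correct argument for (c) $\Rightarrow$ (a), which the paper uses, is to cite the description $\ker\varphi=\langle x\cwedge y\mid [x,y]\in N\rangle$ from \cite{Mor12}: injectivity of $\varphi$ then says that every such $x\cwedge y$ is trivial, whence $[x,y]=1$ under the commutator map, i.e.\ $N\cap\K(G)=1$. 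Your own final paragraph almost gets there (you correctly note $\K(G)\subseteq G'$), but the detour through $N\cap G'$ is both unnecessary and wrong.

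In short, the paper does not argue in a cycle: it proves (a) $\Leftrightarrow$ (b) via the free-presentation computation of the image of $\M_0(G)\to\M_0(G/N)$, and separately (a) $\Leftrightarrow$ (c) via the explicit kernel of $\varphi$. Your ``reserve'' plan is in fact the right one.
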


\begin{proof}
Let $G=F/R$ and $N=S/R$ be free presentations of $G$ and $N$. Then the image of
the map $\M_0(G)\to\M_0(G/N)$ can be identified with $\langle \K(F)\cap
R\rangle/[F,S]$. Thus the above map is surjective if and only if $\langle
\K(F)\cap R\rangle =\langle \K(F)\cap S\rangle$. In particular, $\langle
\K(F)\cap S\rangle \le R$, therefore $N$ is a CP subgroup of $G$.
This, together with Lemma \ref{l:b0cp}, shows that (a) and (b) are equivalent. Furthermore,
from \cite{Mor12} it follows that 
$\ker\varphi=\langle x\cwedge y\mid [x,y]\in N\rangle$. Hence $\varphi$ is
injective if and only if $\K (G)\cap N=1$, hence (a) and (c) are equivalent.
\end{proof}

We now discuss comparing different extensions. Let
\begin{equation*}
e_1: \,\, \xymatrix{ 1\ar[r] & N_1\ar[r]^{\chi _1} & G_1\ar[r]^{\pi _1} & Q_1\ar[r] & 1}
\end{equation*}
and 
\begin{equation*}
e_2: \,\, \xymatrix{ 1\ar[r] & N_2\ar[r]^{\chi _2} & G_2\ar[r]^{\pi _2} & Q_2\ar[r] & 1}
\end{equation*}

be central extensions. Following \cite{Bey82}, we say that $e_1$ and $e_2$ are
{\it isoclinic}, if there exist isomorphisms $\eta :Q_1\to Q_2$ and
$\xi :G_1'\to G_2'$ such that the diagram
\[
\xymatrix{
Q_1\times Q_1 \ar[r]^{c_1} \ar[d]^{\eta\times\eta} & G_1' \ar[d]^{\xi}\\
Q_2\times Q_2 \ar[r]^{c_2} & G_2'
}
\]
commutes, where the maps $c_i$, $i=1,2$, are defined by the rules 
$c_i(\pi _i(x),\pi _i(y))=[x,y]$. Note that these are well defined, since the
extensions are central.
\begin{proposition}
\label{p:isoclin}
Let $e_1$ and $e_2$ be isoclinic central extensions. If $e_1$ is
a CP extension, then so is $e_2$.
\end{proposition}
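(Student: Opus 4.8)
The plan is to reduce the statement about isoclinic extensions to the intrinsic criterion furnished by Proposition \ref{p:cp}, namely that a central extension $e:\,1\to N\to G\to Q\to 1$ is CP if and only if $\chi(N)\cap\K(G)=1$. Thus it suffices to show that the hypothesis $\chi_1(N_1)\cap\K(G_1)=1$ forces $\chi_2(N_2)\cap\K(G_2)=1$, using only the data of an isoclinism, i.e. the isomorphisms $\eta:Q_1\to Q_2$ and $\xi:G_1'\to G_2'$ making the commutator squares commute.

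First I would record the key compatibility: since $c_i(\pi_i(x),\pi_i(y))=[x,y]$ and the isoclinism square commutes, we have $\xi([x,y]) = [x',y']$ whenever $x',y'\in G_2$ are chosen with $\pi_2(x')=\eta(\pi_1(x))$ and $\pi_2(y')=\eta(\pi_1(y))$; in particular $\xi$ carries the set $\K(G_1)=\{[x,y]\mid x,y\in G_1\}$ onto $\K(G_2)$, hence also $\langle\K(G_1)\rangle=G_1'$ onto $\langle\K(G_2)\rangle=G_2'$, which is consistent with $\xi$ being defined on $G_1'$. Next, observe that $\chi_i(N_i)\cap G_i' = \chi_i(N_i)\cap\langle\K(G_i)\rangle$, and that by Proposition \ref{p:cp} the CP condition for a central extension is equivalent to $\chi_i(N_i)\cap\K(G_i)=1$; I would note that for a central extension with $N_i$ central, $\chi_i(N_i)\cap\K(G_i)=1$ is in fact equivalent to $\chi_i(N_i)\cap G_i'=1$ is \emph{not} automatic, so the cleanest route is to work directly with the set $\K(G_i)$ rather than the subgroup it generates. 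So the goal becomes: if $\xi$ maps $\K(G_1)$ bijectively onto $\K(G_2)$ and $\chi_1(N_1)\cap\K(G_1)=1$, then $\chi_2(N_2)\cap\K(G_2)=1$.

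To finish, take $1\neq t\in\chi_2(N_2)\cap\K(G_2)$ and derive a contradiction. Write $t=[x',y']$ with $x',y'\in G_2$; since $t\in\chi_2(N_2)\le Z(G_2)$, both $\pi_2(x')$ and $\pi_2(y')$ commute is not needed — rather, $\pi_2([x',y'])=1$ means $[\pi_2(x'),\pi_2(y')]=1$ in $Q_2$, so $[\eta^{-1}\pi_2(x'),\eta^{-1}\pi_2(y')]=1$ in $Q_1$. Pick lifts $x,y\in G_1$ of these commuting elements of $Q_1$; then $[x,y]\in\K(G_1)$ and $\pi_1([x,y])=1$, so $[x,y]\in\chi_1(N_1)\cap\K(G_1)=1$, whence $[x,y]=1$. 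But by the commuting-square compatibility $\xi([x,y])=[x',y']=t$ (using that $x',y'$ are valid lifts of $\eta(\pi_1(x)),\eta(\pi_1(y))$), so $t=\xi(1)=1$, a contradiction. Therefore $\chi_2(N_2)\cap\K(G_2)=1$ and $e_2$ is CP by Proposition \ref{p:cp}.

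The main obstacle, and the point requiring care, is the well-definedness issue in the step ``$\xi([x,y])=[x',y']$'': the isoclinism square only asserts $\xi\circ c_1 = c_2\circ(\eta\times\eta)$ where $c_i$ is defined on $Q_i\times Q_i$, so one must check that $c_2(\eta(\pi_1(x)),\eta(\pi_1(y)))$ genuinely equals the specific commutator $[x',y']$ formed from \emph{our} chosen lifts $x',y'$ — this is exactly the content of $c_2$ being well defined (centrality of $e_2$), so it is fine, but the argument should state it explicitly. A secondary subtlety is making sure we use $\K(G)$ (the set of commutators) and not $G'$ throughout, since Proposition \ref{p:cp} is phrased with $\K(G)$; the remark following that proposition already flags this distinction, so aligning with it keeps the argument clean.
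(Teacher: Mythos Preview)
Your proof is correct and rests on the same mechanism as the paper's: pull a commuting pair in $Q_2$ back through $\eta^{-1}$ to $Q_1$, lift there, observe the lifts commute, and push forward via $\xi$ using the compatibility $\xi\circ c_1 = c_2\circ(\eta\times\eta)$ to conclude the lifts in $G_2$ commute. The only difference is packaging: you route the argument through the criterion of Proposition~\ref{p:cp} (so you phrase it as showing $\chi_2(N_2)\cap\K(G_2)=1$ by contradiction), whereas the paper works directly from the definition of a CP extension and produces commuting lifts explicitly. The detour through Proposition~\ref{p:cp} is unnecessary here---once you unwrap it, steps 3--6 of your argument are exactly the paper's four-line proof---so the paper's version is shorter, but the logical content is identical. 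Your closing remarks about well-definedness of $c_2$ and the $\K(G)$ versus $G'$ distinction are accurate observations, though the first is already baked into the definition of isoclinism and the second becomes moot if you drop the Proposition~\ref{p:cp} framing.
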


\begin{proof}
We use the same notations as above. Choose a commuting pair $(x_2,y_2)$ of
elements of  $Q_2$. Denote $x_2=\eta (x_1)$ and $y_2=\eta (y_1)$ where
$x_1,y_1\in Q_1$. Clearly $[x_1,y_1]=1$. As $e_1$ is a CP central extension, we
can choose commuting lifts $g_1,h_1\in G_1$ of $x_1$ and $y_1$, respectively. We
can write $x_2=\pi _2(g_2)$ and $y_2=\pi _2(h_2)$ for some $g_2,h_2\in G_2$. By
definition, $1=\xi ([g_1,h_1])=[g_2,h_2]$, hence $g_2$ and $h_2$ are commuting
lifts in $G_2$ of $x_2$ and $y_2$, respectively.
\end{proof}

We now show how CP extensions up to isoclinism of a given group can be
obtained from an action of its Bogomolov multiplier.

\begin{lemma}[\cite{Mor12}]
\label{l:B0exact}
Let $N$ be a normal subgroup of a group $G$. Then the sequence of groups
\[
\Bt_0(G) \longrightarrow
\Bt_0(G/N) \longrightarrow
\frac{N}{\langle N\cap \K(G)\rangle} \longrightarrow
 G^{\rm ab} \longrightarrow
 (G/N)^{\rm ab} \longrightarrow
 0
\]
with canonical maps is exact.
\end{lemma}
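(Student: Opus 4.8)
The plan is to reproduce, for the functor $\Bt_0$, the classical derivation of the five-term homology sequence of Stallings and Stammbach, using in place of the Hopf formula for the Schur multiplier the Hopf-type formula for the Bogomolov multiplier established in \cite{Mor12} (the same one already invoked in the proof of Lemma \ref{l:b0cp}). Fix a free presentation $G=F/R$ and let $S\trianglelefteq F$ with $R\le S$ and $N=S/R$, so that $G/N=F/S$. One then has the identifications $\Bt_0(G)=(F'\cap R)/\langle\K(F)\cap R\rangle$, $\Bt_0(G/N)=(F'\cap S)/\langle\K(F)\cap S\rangle$, $G^{\rm ab}=F/F'R$, $(G/N)^{\rm ab}=F/F'S$, and --- the one identification needing a word of care --- $\langle N\cap\K(G)\rangle=\langle\K(F)\cap S\rangle R/R$, because a commutator of $G$ lifts to a commutator of $F$ and lies in $N$ precisely when that lift lies in $S$; consequently $N/\langle N\cap\K(G)\rangle\cong S/\langle\K(F)\cap S\rangle R$. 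All the subgroups $\langle\K(F)\cap S\rangle$ and $\langle\K(F)\cap R\rangle$ are normal in $F$ since conjugates of commutators are commutators, so every quotient above is legitimate, and the five canonical maps are simply those induced by the inclusions $F'\cap R\hookrightarrow F'\cap S\hookrightarrow S\hookrightarrow F$ followed by the relevant quotient projections; each is checked to be well defined in one line.

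Exactness then reduces to a short string of elementary subgroup manipulations inside $F$. At $(G/N)^{\rm ab}$ surjectivity is immediate from $F'R\le F'S$. At $G^{\rm ab}$ one uses $SF'R=SF'$ (since $R\le S$) to see that the image of $S$ and the kernel $F'S/F'R$ agree. At $N/\langle N\cap\K(G)\rangle$ the relevant identity is the modular-law computation $S\cap F'R=(F'\cap S)R$: if $s\in S$ and $s=cr$ with $c\in F'$, $r\in R$, then $c=sr^{-1}\in S$, whence $c\in F'\cap S$. Finally, at $\Bt_0(G/N)$ one needs $(F'\cap S)\cap\langle\K(F)\cap S\rangle R=\langle\K(F)\cap S\rangle(F'\cap R)$, proved the same way: if $w\in F'\cap S$ and $w=kr$ with $k\in\langle\K(F)\cap S\rangle\le F'\cap S$ and $r\in R$, then $r=k^{-1}w\in F'\cap R$. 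In each of the four spots the kernel so computed coincides on the nose with the image coming from the preceding term, which gives the result.

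The main obstacle, such as it is, is not the diagram chase but making sure the input facts are on firm ground: that the Hopf-type formulas for $\Bt_0(G)$ and $\Bt_0(G/N)$ are independent of the chosen free presentation, and that $\langle N\cap\K(G)\rangle$ really corresponds to $\langle\K(F)\cap S\rangle R$ under these identifications. Both are part of the package developed in \cite{Mor12}, so I would cite that for them and spend the body of the proof on the four exactness verifications above. One may note in passing that when $N$ is in addition a central CP subgroup one has $N\cap\K(G)=1$ by Proposition \ref{p:cp}, so the sequence degenerates to
\[
\Bt_0(G)\longrightarrow\Bt_0(G/N)\longrightarrow N\longrightarrow G^{\rm ab}\longrightarrow(G/N)^{\rm ab}\longrightarrow 0,
\]
which is consistent with the first sequence of Lemma \ref{l:b0cp}, where $\Bt_0(G)\to\Bt_0(G/N)$ is moreover shown to be injective with cokernel $N\cap G'$.
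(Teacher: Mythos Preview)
Your argument is correct. The free-presentation identifications are set up properly, the description $\langle N\cap\K(G)\rangle=\langle\K(F)\cap S\rangle R/R$ is justified exactly as needed, and the four exactness verifications via modular-law computations go through. The only cosmetic point is that in the exactness at $\Bt_0(G/N)$ you should perhaps note explicitly that $\langle\K(F)\cap S\rangle\le F'$ (which you use when concluding $r\in F'$), but this is immediate.

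As for comparison: the paper does not prove this lemma at all. It is stated with the attribution \cite{Mor12} and used as a black box. Your proof is precisely the kind of argument one would expect to find in that reference --- the direct $\Bt_0$-analogue of the Stallings--Stammbach derivation of the classical five-term sequence, with the Hopf formula for $\HH_2$ replaced throughout by the Hopf-type formula for $\Bt_0$. So there is nothing to contrast in terms of approach; you have simply supplied the omitted proof, and your closing remark relating the general sequence to the central CP case of Lemma~\ref{l:b0cp} is a nice consistency check.
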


\begin{theorem}
\label{t:isoclasses}
The isoclinism classes of central CP extensions with factor group isomorphic to
$Q$ correspond to the orbits of the action of $\Aut Q$ on the subgroups of $\Bt
_0(Q)$ given by $(\varphi ,U)\mapsto \Bt_0(\varphi )U$, where $\varphi\in\Aut Q$
and $U\le\Bt_0(Q)$.
\end{theorem}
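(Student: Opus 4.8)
The plan is to set up a concrete correspondence in both directions and then check it respects isoclinism. First I would recall that every central CP extension $e$ with factor group $Q$ determines, via its cohomology class in $\HHCP^2(Q,N)$ together with Theorem \ref{t:uct2}, a homomorphism $\tilde{\varphi}(e)\colon \Bt_0(Q)\to N$; the image of this map is a canonical subgroup of a quotient of $\Bt_0(Q)$, but I want a subgroup of $\Bt_0(Q)$ itself, so instead I would work with the kernel: set $U_e = \ker\tilde{\varphi}(e)\le\Bt_0(Q)$ when $N$ is chosen so that $\tilde\varphi(e)$ is injective, or more robustly associate to $e$ the subgroup of $\Bt_0(Q)$ that, in the notation of Lemma \ref{l:b0cp}, arises as follows. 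Realize $e$ inside a CP cover: by the theory of CP covers there is a stem central CP extension $\tilde e\colon 1\to \Bt_0(Q)\to \tilde Q\to Q\to 1$, and any central CP extension $e$ with factor $Q$ is, up to isoclinism, a quotient of $\tilde e$ by a subgroup $U\le\Bt_0(Q)$ (with $U$ a CP subgroup of $\tilde Q$, automatic here since $\Bt_0(Q)\cap\K(\tilde Q)=1$). So to $e$ I assign the orbit of such a $U$.

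Next I would make the two maps precise and check they are mutually inverse on the level of orbits. Given $U\le\Bt_0(Q)$, form $\tilde Q/U$; by Proposition \ref{p:cpm0} (applied to $N=\Bt_0(Q)$ inside $\tilde Q$, and then to the image $U$) the extension $1\to\Bt_0(Q)/U\to\tilde Q/U\to Q\to 1$ is again a central CP extension, and Lemma \ref{l:b0cp} identifies $\Bt_0(\tilde Q/U)$ with $U$ itself (since $\Bt_0(\tilde Q)=0$ for a CP cover), which pins down the inverse assignment. The role of $\Aut Q$ is exactly that two choices of the cocycle/identification of the factor group differ by an automorphism of $Q$; an automorphism $\varphi\in\Aut Q$ lifts — not canonically, but after choosing a lift — to the CP cover and carries the subgroup $U$ to $\Bt_0(\varphi)U$, matching the stated action. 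I would verify $\Bt_0$ is functorial enough for this: $\Bt_0(\varphi)$ is an automorphism of $\Bt_0(Q)$ because $\varphi$ is, and different lifts of $\varphi$ to $\tilde Q$ induce the same automorphism of $\Bt_0(Q)\cong\Bt_0(\tilde Q)$, again because $\Bt_0$ kills the effect of inner automorphisms and of the extension data.

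Then I would show isoclinism corresponds precisely to being in the same $\Aut Q$-orbit. For the forward direction: if $e_1$ and $e_2$ are isoclinic central CP extensions with factor $Q$, the isoclinism data include an automorphism $\eta$ of $Q$ (after identifying both factor groups with $Q$) and an isomorphism $\xi$ of commutator subgroups compatible with the commutator maps; lifting $\eta$ to the CP cover and tracking how it moves the defining subgroups $U_1, U_2$, the compatibility of $\xi$ with $c_1,c_2$ forces $U_2 = \Bt_0(\eta)U_1$ up to the ambiguity already absorbed into the orbit. For the converse: if $U_2=\Bt_0(\varphi)U_1$, then $\varphi$ lifts to an isomorphism $\tilde Q/U_1\to\tilde Q/U_2$ which restricts to an isomorphism of commutator subgroups intertwining the commutator maps, so the two quotient extensions are isoclinic; and by Proposition \ref{p:isoclin} together with the fact (from the CP cover theory) that every central CP extension with factor $Q$ is isoclinic to one of these quotients, this transfers to $e_1,e_2$ themselves. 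Finally I would note well-definedness: Lemma \ref{l:equiv} and Proposition \ref{p:isoclin} guarantee the class assigned to $e$ depends only on the isoclinism class of $e$, and Lemma \ref{l:B0exact} can be invoked if needed to see that the subgroup $U$ really is recovered as $\Bt_0$ of the quotient.

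The main obstacle I expect is the bookkeeping around non-canonicity: neither the lift of $\varphi\in\Aut Q$ to the CP cover nor the embedding of a given CP extension into a fixed CP cover is canonical, so I must argue carefully that all the resulting ambiguity lands inside the $\Aut Q$-orbit and nowhere else — in particular that two CP covers of $Q$, which are only isoclinic and not isomorphic in general, give the same orbit set, which is where the "all CP covers of isoclinic groups are isoclinic" result and the isoclinism-invariance of $\Bt_0$ do the real work.
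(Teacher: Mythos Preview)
Your approach via CP covers is sound, but it takes a genuinely different route from the paper's. The paper never passes through a CP cover; instead it assigns to each central CP extension $e$ the kernel of the map $\tilde\theta(e)\colon\Bt_0(Q)\to N$ arising from the five-term sequence of Lemma~\ref{l:B0exact}, and then reduces directly to the classical Schur-multiplier result of Beyl--Tappe \cite[Proposition~III.2.3]{Bey82}, which says that an isomorphism $\eta\colon Q_1\to Q_2$ induces an isoclinism $e_1\sim e_2$ if and only if $\M(\eta)\ker\theta_*(e_1)=\ker\theta_*(e_2)$. The one technical step is to show, via free presentations and the CP condition $\langle\K(F)\cap S\rangle\le R$, that $\ker\tilde\theta(e)=\ker\theta_*(e)/\M_0(Q)$; once this holds, the $\Bt_0$-statement is literally the classical one read modulo $\M_0(Q)$, and \cite[Proposition~III.2.6]{Bey82} finishes. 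Your route, by contrast, needs the existence of CP covers, the vanishing of $\Bt_0$ of a CP cover, and the fact that every central CP extension is isoclinic to a quotient of a cover---all of which appear only later in the paper (Section~\ref{s:covers}), and Lemma~\ref{l:isostemext} even cites back to the proof of the present theorem, so you would have to reorganize the logical order to avoid circularity. The paper's approach buys brevity and independence from the cover machinery; yours gives a more geometric picture of the correspondence but at the cost of that forward dependence and of the non-canonicity bookkeeping you correctly flag as the main obstacle.
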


\begin{proof}
Let
\begin{equation*}
e: \,\, \xymatrix{ 1\ar[r] & N\ar[r]^{\chi} & G\ar[r]^{\pi} & Q\ar[r] & 1}
\end{equation*}
be a central CP extension. As $\chi (N)\cap K(G)=1$, it follows from
\cite{Bey82} and Lemma \ref{l:B0exact} that we have the following commutative
diagram with exact rows and columns:
\begin{equation*}
\xymatrix{
0 \ar[r] & \Bt _0(G)\ar[r]^{\Bt_0(\pi)} & \Bt_0(Q)\ar[r]^{\tilde{\theta}(e)} &
	N\ar[r]^\tau \ar@{=}[d] &
	G^{\rm ab} \ar[r]\ar@{=}[d] & Q^{\rm ab}\ar[r]\ar@{=}[d] & 0\\
 & \M (G)\ar[r]^{\M(\pi)}\ar@{->>}[u] & \M(Q)\ar[r]^{\theta _*(e)}\ar@{->>}[u] & 
 	N\ar[r]^\tau & G^{\rm ab} \ar[r] & Q^{\rm ab}\ar[r] & 0\\
 & \M_0(G)\ar@{^{(}->}[u]\ar[r] & \M_0(Q)\ar@{^{(}->}[u] & & & &
}
\end{equation*}
By the exactness we have that the image of $\chi\tilde{\theta}(e)$ is equal to
$\chi (N)\cap G'$ which equals to the image of $\chi\theta _*(e)$. Since $\chi$
is injective, it follows that $\tilde{\theta}(e)$ and $\theta _*(e)$ have the
same image. Furthermore, we claim that $\ker \tilde{\theta}(e)=\ker \theta _*(e)
/\M_0(Q)$. To this end, consider free presentations $G=F/R$, $N=S/R$, and
$Q=F/S$. Since the extension $e$ is CP, it follows that $\langle \K(F)\cap
S\rangle\le R$. With the above identifications we have that $\ker \theta
_*(e)=(F'\cap R)/[F,S]$ and $\ker\tilde{\theta}(e)=(F'\cap R)/\langle \K(F)\cap
S\rangle$.  As $\M_0(Q)=\langle \K(F)\cap S\rangle /[F,S]$, the equality
follows.

Let now 
\begin{equation*}
e_i: \,\, \xymatrix{ 1\ar[r] & N_i\ar[r]^{\chi _i} & G_i\ar[r]^{\pi _i} &
	Q_i\ar[r] & 1},\,\, (i=1,2)
\end{equation*}
by central CP extensions, and let $\eta :Q_1\to Q_2$ be an isomorphism of
groups. By \cite[Proposition III.2.3]{Bey82} we have that $\eta$ induces
isoclinism between $e_1$ and $e_2$ if and only if $\M(\eta)\ker \theta
_*(e_1)=\ker \theta _*(e_2)$. By the above, this is equivalent to
$\Bt_0(\eta)\ker\tilde{\theta}(e_1)=\ker\tilde{\theta}(e_2)$. The proof of
\cite[Proposition III.2.6]{Bey82} can now be suitably modified to obtain the
result, we skip the details.
\end{proof}

%%%%%%%%%%%%%%%%%%%%%%%%%%%%%%%%%%%%%%%%%%%%%%%%%%%%%%%%%%%%%%%%%%%%%%%
\section{Maximal CP extensions}
\label{s:covers}

In this section, we deal with studying maximal central CP extensions of a given
group. Maximal here refers to the size of the kernel in a suitable representative
extension under isoclinism. Recall that an extension
\[
\xymatrix{ 1\ar[r] & N\ar[r]^\chi & G\ar[r] & Q\ar[r] & 1}
\]
is termed to be {\em stem} whenever $\chi(N) \leq [G,G]$. The motivation comes
from the following lemma.

\begin{lemma} \label{l:isostemext}
Every central CP extension is isoclinic to a stem central CP extension.
\end{lemma}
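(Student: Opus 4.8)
The statement is a CP-analogue of the classical fact that every central extension is isoclinic to a stem extension, so the plan is to imitate the classical construction (cf.\ \cite{Bey82}) and then check that the CP property survives. Start with a central CP extension
\[
e: \,\, \xymatrix{ 1\ar[r] & N\ar[r]^{\chi} & G\ar[r]^{\pi} & Q\ar[r] & 1}.
\]
The obstruction to being stem is measured by $\chi(N) \cap G'$: if $\chi(N) \leq G'$ we are already done, so in general we want to ``quotient away'' a complement of $\chi(N) \cap G'$ inside $\chi(N)$. Since $N$ is a central CP subgroup, Proposition \ref{p:cp} gives $\chi(N) \cap \K(G) = 1$, and in particular $\chi(N) \cap G'$ is a subgroup of $\chi(N)$ on which we have good control. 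Choose a subgroup $C \leq \chi(N)$ with $\chi(N) = (\chi(N) \cap G') \oplus C$ (possible since $\chi(N)$ is abelian — one should be mildly careful in the infinite case, but for the intended application $N$ is finite, or one invokes that $\chi(N)\cap G'$ is a direct summand). Now $C$ is central in $G$, so set $\bar{G} = G/C$, $\bar{N} = \chi(N)/C \cong N/(\text{preimage of } C)$, and form the induced extension
\[
\bar{e}: \,\, \xymatrix{ 1\ar[r] & \bar N\ar[r] & \bar G\ar[r] & Q\ar[r] & 1}.
\]

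\textbf{Key steps.} First I would verify that $\bar{e}$ is stem: the image of $\bar{N}$ in $\bar{G}$ is $\chi(N)/C$, and since $\chi(N) = (\chi(N)\cap G')C$ we get $\chi(N)/C = (\chi(N)\cap G')C/C \leq G'C/C = \bar{G}'$, so $\bar{N} \hookrightarrow \bar{G}'$ as required. Second, I would check $\bar{e}$ is still a central CP extension: centrality is immediate as $C$ is central in $G$; for the CP property, by Proposition \ref{p:cp} it suffices to show $\bar{N} \cap \K(\bar{G}) = 1$ in $\bar{G}$, i.e.\ that $(\chi(N)/C) \cap \K(G/C) = 1$. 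An element here has the form $[g,h]C$ with $[g,h] \in \chi(N)C$; writing $[g,h] = nc$ with $n \in \chi(N)$, $c \in C$, and using that $C \leq \chi(N) \leq Z(G)$ one gets $[g,h] \in \chi(N)$, hence $[g,h] \in \chi(N) \cap \K(G) = 1$ by the CP hypothesis on $e$; thus $[g,h]C = C$, giving triviality. Third, I would exhibit the isoclinism between $e$ and $\bar{e}$: the factor group is $Q$ in both cases (take $\eta = \mathrm{id}_Q$), and the quotient map $G \to \bar{G}$ restricts to a homomorphism $G' \to \bar{G}'$ which is surjective; it is injective because its kernel is $G' \cap C \leq (\chi(N) \cap G') \cap C = 1$. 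So $\xi: G' \to \bar{G}'$ is an isomorphism, and compatibility with the commutator maps $c_i$ is automatic since $\xi$ is just induced by the quotient. Hence $e$ and $\bar{e}$ are isoclinic in the sense defined above.

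\textbf{Main obstacle.} The only genuinely delicate point is the choice of the complement $C$: for a general abelian group $\chi(N)$ the subgroup $\chi(N) \cap G'$ need not be a direct summand, so the argument as stated is cleanest when $N$ is finitely generated (e.g.\ finite), which is the setting the paper cares about for CP covers. If one wants the statement in full generality, one should instead argue more carefully — for instance, replace $G$ by $G/C$ where $C$ is chosen maximal with respect to $C \cap G' = 1$ and check by a Zorn's lemma / divisibility argument that this forces $\chi(N)/C \leq \bar{G}'$ — but I expect the paper to restrict attention to the finite case where a complement exists by the structure theorem, so I would simply note this and proceed. Everything else is a routine diagram chase built on Proposition \ref{p:cp} and the definition of isoclinism of extensions.
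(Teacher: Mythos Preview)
Your argument is correct (modulo the finiteness caveat you flag), but it proceeds along a different line from the paper's. The paper does not quotient $G$ by a complement at all; instead it works cohomologically: it sets $U=\ker\tilde\theta(e)\le\Bt_0(Q)$ (from the five-term sequence of Lemma~\ref{l:B0exact}), uses the CP Universal Coefficient Theorem~\ref{t:uct2} applied to the projection $\Bt_0(Q)\to\Bt_0(Q)/U$ to manufacture a central CP extension $\bar e$ of $\Bt_0(Q)/U$ by $Q$, and then invokes the proof of Theorem~\ref{t:isoclasses} to see that $\bar e$ is stem and isoclinic to $e$. The kernel is identified as $\Bt_0(Q)/U\cong\im\tilde\theta(e)\cong N\cap[G,G]$.

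Your route is more elementary and self-contained: it uses only Proposition~\ref{p:cp} and a direct isoclinism check, and the middle group $\bar G=G/C$ is an explicit quotient of $G$. The price is the need for a complement to $\chi(N)\cap G'$ in $\chi(N)$, which you rightly note requires care beyond the finite case. The paper's cohomological construction sidesteps this splitting issue and, more importantly, packages the kernel as a quotient of $\Bt_0(Q)$ from the outset---a description that feeds directly into later arguments (e.g.\ Theorem~\ref{t:cpcovers} and Proposition~\ref{p:sizeofbog}). Both approaches arrive at the same stem extension up to equivalence, with kernel $N\cap[G,G]$.
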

\begin{proof}
The argument follows along the lines of \cite[Proposition III.2.6]{Bey82}. Let
\[ e: \,\, \xymatrix{ 1\ar[r] & N\ar[r] & G\ar[r] & Q\ar[r] & 1} \] be a central
CP extension. Put $U = \ker \tilde \theta (e)$, where $\tilde \theta(e)$ is the
homomorphism $\Bt_0(Q) \to N$ from the $5$-term exact sequence in Lemma
\ref{l:B0exact}. The subgroup $U$ of $\Bt_0(Q)$ determines a central CP
extension $\bar e$ of $\Bt_0(Q)/U$ by $Q$ via Theorem \ref{t:uct2} applied to
the epimorphism $\Bt_0(Q) \to \Bt_0(Q)/U$. Thus $\tilde \theta(\bar e)$
corresponds to the natural projection $\Bt_0(Q) \to \Bt_0(Q)/U$. Note that $\bar
e$ is a stem central CP extension isoclinic to $e$, cf. the proof of Theorem
\ref{t:isoclasses}. The kernel of the extension $\bar e$ is precisely
$\Bt_0(Q)/U \cong \im \tilde \theta (e) \cong \ker (N \to G/[G,G]) = N \cap
[G,G]$.
\end{proof}

Up to isoclinism of extensions, it therefore suffices to consider stem central
CP extensions.

Given a group $Q$, any stem central CP extension of a group $N$ by $Q$ with $|N|
= |\Bt_0(Q)|$ is called a {\em CP cover} of $Q$. The following theorem
justifies the terminology.

\begin{theorem} \label{t:cpcovers} Let $Q$ be a finite group given via a free
presentation $Q = F/R$. Set $H = F/\langle \K(F) \cap R \rangle$ and
$A = R/\langle K(F) \cap R \rangle$.
\begin{enumerate}
	\item $A$ is a finitely generated central subgroup of $H$ and its torsion 
	subgroup is $T(A) = ([F,F] \cap R)/\langle K(F) \cap R \rangle \cong \Bt_0(Q)$.
	\item Let $C$ be a complement to $T(A)$ in $A$. Then $H/C$ is a CP cover of $Q$.
	\item Let $G$ be a stem central CP extension of a group $N$ by $Q$. Then $G$
	is a homomorphic image of $H$ and in particular $N$ is a homomorphic image
	of $\Bt_0(Q)$.
	\item Let $G$ be a CP cover of $Q$ with kernel $N$. Then $N \cong \Bt_0(Q)$
	and $G$ is isomorphic to a quotient of $H$ by a complement of $T(A)$ in
	$A$.
	\item CP covers of $Q$ are precisely the stem central CP extensions of $Q$
	of maximal order.
	\item CP covers of $Q$ are represented by the cocycles
	$\tilde \varphi^{-1}(1_{\Bt_0(Q)})$ in $\HH^2(Q,\Bt_0(Q))$, where
	$\tilde \varphi$ is the mapping induced by the Universal Coefficients
	Theorem \ref{t:uct2}.
\end{enumerate}
\end{theorem}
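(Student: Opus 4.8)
The plan is to establish the six statements essentially in the order given, since each builds on the previous. First I would set up the free presentation and identify the subgroups in play. The key structural fact is Hopf-type formula $\Bt_0(Q) = (F' \cap R)/\langle \K(F) \cap R\rangle$ already recalled in the excerpt; with $H = F/\langle \K(F)\cap R\rangle$ and $A = R/\langle \K(F)\cap R\rangle$, the torsion subgroup claim in (1) amounts to showing $([F,F]\cap R)/\langle \K(F)\cap R\rangle$ is torsion (it equals $\Bt_0(Q)$, which is finite since $Q$ is finite) and that the quotient $A/T(A) \cong R/([F,F]\cap R)$ embeds in $F^{\rm ab}$, hence is torsion-free finitely generated; centrality of $A$ in $H$ follows because $[F,R] \le \langle \K(F)\cap R\rangle$ (as $[F,R]$ is generated by commutators lying in $R$). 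For (2), given a complement $C$ to $T(A)$ in $A$, I would check that $H/C$ is an extension of $A/C \cong T(A) \cong \Bt_0(Q)$ by $F/R = Q$, that it is central (image of $A$ is central), that it is stem (the kernel $A/C$ lands in $(H/C)'$ because $T(A) \le F'\langle \K(F)\cap R\rangle/\langle\cdots\rangle$), and that it is CP — for the last point I would invoke Proposition~\ref{p:cp}, i.e. check that the image of $A/C$ meets $\K(H/C)$ trivially, which reduces to $\langle \K(F)\cap R\rangle \cdot C = \langle \K(F) \cap R\rangle \cdot C$ type bookkeeping, equivalently that $\K(F)$-elements of $R$ already lie in $\langle \K(F)\cap R\rangle \le C\cdot(\text{torsion part})$; here one uses that $T(A)$ is generated by images of commutators $[x,y]$ with $[\bar x,\bar y]=1$ in $Q$... actually more carefully, that $\langle \K(H/C)\rangle \cap (A/C) = 1$.

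For (3) and (4), which are the heart of the maximality statements, I would argue as follows. Let $G$ be a stem central CP extension of $N$ by $Q$; present $G = F/R_0$ with $R_0 \le R$ and $S/R_0 = N$ for the appropriate normal subgroup, arranged so that the projection $F \to G$ extends the chosen presentation of $Q$. Since the extension is CP, Proposition~\ref{p:cp} (in its free-presentation form used in Lemma~\ref{l:b0cp}) gives $\langle \K(F)\cap S\rangle \le R_0$, and stemness gives $S \le [F,F]R_0$. I then want a surjection $H = F/\langle \K(F)\cap R\rangle \twoheadrightarrow G = F/R_0$, which exists provided $\langle \K(F)\cap R\rangle \le R_0$; this needs $R = S$ in the sense that $N$ covers all of $\Bt_0(Q)$ up to the relevant identification — more precisely, I would first observe $\langle \K(F)\cap R\rangle \le \langle\K(F) \cap S\rangle \le R_0$ once one knows $R \le S$, which holds because $\pi\colon G \to Q$ has kernel $N = S/R_0$ and $R/R_0 \le S/R_0$. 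So the surjection $H \twoheadrightarrow G$ exists, and restricting it to $A = R/\langle\K(F)\cap R\rangle$ gives a surjection onto $N = R/R_0$ (using $\langle \K(F)\cap R\rangle \le R_0 \le R$); composing with the torsion quotient and noting $N$ is finite (hence torsion) shows $N$ is a quotient of $T(A) \cong \Bt_0(Q)$. For (4), if moreover $|N| = |\Bt_0(Q)|$, a surjection of finite groups of equal order is an isomorphism, so $N \cong \Bt_0(Q)$; and the kernel $R_0/\langle\K(F)\cap R\rangle$ of $H \twoheadrightarrow G$ then has trivial intersection with $T(A)$ (since $A \to N$ is an isomorphism on the torsion part by order count) and together with $A$ generates... actually is a complement: $R_0/\langle\K(F)\cap R\rangle \oplus T(A) = A$ inside the finitely generated abelian group $A$, using that $A/(R_0/\langle\cdots\rangle) \cong N$ is torsion-free-complement... here the point is $R_0 \cap [F,F]R \subseteq \langle\K(F)\cap R\rangle$ by the CP and order conditions. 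This is the step I expect to be the main obstacle: pinning down exactly why the kernel subgroup is literally a complement of $T(A)$ in $A$ rather than just intersecting it trivially, i.e. the rank/order matching that forces $A = (R_0/\langle\K(F)\cap R\rangle) \times T(A)$.

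Statement (5) is then a packaging of (1)–(4) together with Lemma~\ref{l:isostemext}: any central CP extension is isoclinic to a stem one, every stem central CP extension has kernel a quotient of $\Bt_0(Q)$ by (3), so its order is at most $|\Bt_0(Q)|$, with equality exactly for CP covers by definition and by (2) such covers exist; hence CP covers are precisely the stem central CP extensions of maximal order. Finally (6): by Theorem~\ref{t:uct2} the map $\tilde\varphi\colon \HHCP^2(Q,\Bt_0(Q)) \to \Hom(\Bt_0(Q),\Bt_0(Q))$ is the one from the split sequence, and an element $[\omega]$ of $\HHCP^2(Q,\Bt_0(Q)) \subseteq \HH^2(Q,\Bt_0(Q))$ represents (via Proposition~\ref{p:classext} and the discussion in the proof of Theorem~\ref{t:uct2}) a central CP extension whose associated map $\tilde\theta(e)\colon \Bt_0(Q)\to \Bt_0(Q)$ — cf. the diagram in the proof of Theorem~\ref{t:isoclasses} — is, up to sign/identification, the value of $\tilde\varphi([\omega])$; the extension is stem with kernel all of $\Bt_0(Q)$ exactly when this map is injective (stem forces the cokernel side to vanish) and surjective, i.e. an automorphism, and being a CP cover pins it to the specific preimages of $1_{\Bt_0(Q)}$. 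I would close by remarking that the set $\tilde\varphi^{-1}(1_{\Bt_0(Q)})$ is a coset of $\ker\tilde\varphi = \Ext(Q^{\rm ab},\Bt_0(Q))$, matching the expected parametrization of CP covers, and that different choices of complement $C$ in (2) correspond exactly to translating by this subgroup.
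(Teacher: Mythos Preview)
Your approach matches the paper's, which simply says the result follows from Huppert's classical argument for Schur covers \cite[Hauptsatz V.23.5]{Hup67} combined with the Hopf formula for $\Bt_0$; you are writing out that argument in detail, and the overall architecture is correct.

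There is one genuine gap in your treatment of (3). You correctly note that stemness gives $S \le [F,F]R_0$ (with $S = R$), and that the CP property gives $\langle \K(F)\cap R\rangle \le R_0$, so $H \twoheadrightarrow G$ exists and $A$ surjects onto $N$. But then you write ``composing with the torsion quotient and noting $N$ is finite (hence torsion) shows $N$ is a quotient of $T(A)$'': this step fails as written. A surjection from a finitely generated abelian group $A$ onto a finite group $N$ does \emph{not} in general restrict to a surjection from $T(A)$ --- take $A = \mathbb{Z}$, $N = \mathbb{Z}/2\mathbb{Z}$. What you must do is use stemness directly: $R \le [F,F]R_0$ with $R_0 \le R$ gives $R = ([F,F]\cap R)\,R_0$, so the restriction of $A \twoheadrightarrow N$ to $T(A) = ([F,F]\cap R)/\langle \K(F)\cap R\rangle$ is already surjective. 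Once you fix this, your argument for (4) goes through cleanly: from $R = ([F,F]\cap R)R_0$ you get that $R_0/\langle \K(F)\cap R\rangle$ and $T(A)$ together span $A$, and the order hypothesis $|N| = |\Bt_0(Q)|$ forces the surjection $T(A)\to N$ to be an isomorphism, whence the intersection $(R_0\cap [F,F])/\langle\K(F)\cap R\rangle$ is trivial --- so the kernel really is a complement, resolving exactly the ``main obstacle'' you flagged.

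A minor remark on (6): your argument shows that stem central CP extensions with kernel $\Bt_0(Q)$ correspond to cocycles with $\tilde\varphi([\omega])$ an automorphism of $\Bt_0(Q)$; the restriction to $\tilde\varphi^{-1}(1_{\Bt_0(Q)})$ is a normalization (every CP cover is \emph{represented} by some cocycle in this fibre after adjusting the identification $N\cong\Bt_0(Q)$), and your closing observation that this fibre is a coset of $\Ext(Q^{\rm ab},\Bt_0(Q))$ is exactly what underlies Corollary~\ref{c:perfect}.
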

\begin{proof}
This all follows from the arguments in \cite[Hauptsatz V.23.5]{Hup67} in
combination with the Hopf formula for the Bogomolov multiplier from
\cite{Mor12}.
\end{proof}

Using Theorem \ref{t:cpcovers}, a fast algorithm for computing the Schur
covering groups as developed in \cite{Nic93} may be combined with an algorithm
for determining $F/\langle \K(F) \cap R \rangle$ from \cite{Jez14} to
effectively determine the CP covers of a given group. It is straightforward to
combine the two implementations in {\sf GAP} \cite{GAP4}. 

\begin{corollary} \label{c:perfect}
The number of CP covers of a group $Q$ is at most $|\Ext(Q^{\rm ab},\Bt_0(Q))|$.
In particular, perfect groups have a unique CP cover.
\end{corollary}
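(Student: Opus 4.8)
The plan is to count the cohomology classes that give rise to CP covers, using part~(6) of Theorem~\ref{t:cpcovers} together with the split sequence \eqref{eq:buct}. By Theorem~\ref{t:cpcovers}(4), a CP cover $G$ of $Q$ has kernel $N\cong\Bt_0(Q)$; after fixing such an identification, $G$ is equivalent to $Q[\omega]$ for some cocycle $\omega$ with $[\omega]\in\HHCP^2(Q,\Bt_0(Q))$, and Theorem~\ref{t:cpcovers}(6) tells us $[\omega]\in\tilde\varphi^{-1}(1_{\Bt_0(Q)})$, while conversely every class in this fibre is the class of a CP cover. Hence there is a surjection from $\tilde\varphi^{-1}(1_{\Bt_0(Q)})$ onto the set of isomorphism classes of CP covers of $Q$.

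It remains to count the fibre. Since $\tilde\varphi$ is a group homomorphism, $\tilde\varphi^{-1}(1_{\Bt_0(Q)})$ is a coset of $\ker\tilde\varphi$, and therefore has exactly $|\ker\tilde\varphi|$ elements. Taking $N=\Bt_0(Q)$ in the exact sequence \eqref{eq:buct} of Theorem~\ref{t:uct2} gives $\ker\tilde\varphi=\im\psi$, and $\psi$ is injective, so $|\ker\tilde\varphi|=|\Ext(Q^{\rm ab},\Bt_0(Q))|$. Combining this with the surjection of the previous paragraph, the number of CP covers of $Q$ is at most $|\Ext(Q^{\rm ab},\Bt_0(Q))|$.

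If $Q$ is perfect then $Q^{\rm ab}=1$, so $\Ext(Q^{\rm ab},\Bt_0(Q))=\Ext(1,\Bt_0(Q))=0$ and the fibre $\tilde\varphi^{-1}(1_{\Bt_0(Q)})$ consists of a single cohomology class. A CP cover of $Q$ exists by Theorem~\ref{t:cpcovers}(2), hence there is exactly one CP cover up to isomorphism.

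The step I expect to require the most care is the reduction in the first paragraph: a priori, varying the chosen identification $N\cong\Bt_0(Q)$ lets $\tilde\varphi([\omega])$ range over all of $\Aut\Bt_0(Q)$, so one must observe that such a change alters $\tilde\varphi([\omega])$ only by an automorphism of $\Bt_0(Q)$ and leaves the isomorphism type of the middle group unchanged, which permits normalising to $\tilde\varphi([\omega])=1_{\Bt_0(Q)}$. This is precisely the bookkeeping behind the classical bound on the number of Schur covering groups, and the details can be imported from \cite[Hauptsatz V.23.5]{Hup67} (see also \cite[Chapter III]{Bey82}); with it in place, the remainder of the argument is the routine coset count above.
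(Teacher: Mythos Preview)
Your proposal is correct and follows exactly the argument the paper intends: the corollary is stated without proof immediately after Theorem~\ref{t:cpcovers}, and the implicit reasoning is precisely your combination of part~(6) of that theorem with the fibre count coming from the split exact sequence~\eqref{eq:buct}. Your final paragraph on normalising the identification $N\cong\Bt_0(Q)$ is a welcome clarification of a point the paper leaves entirely to the reference \cite[Hauptsatz~V.23.5]{Hup67}.
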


\begin{example} 
Let $Q$ be a $4$- or $12$-cover of ${\rm PSL}(3,4)$. The group $Q$ is a quasi-simple group and it is shown in \cite{Kun08} that $\Bt_0(Q) \cong C_2$, so $Q$
has a unique proper CP cover.
\end{example}

We stress an important difference between Schur covering groups and CP covers,
indicating a more intimate connection of the latter with the theory of
(universal) covering spaces from algebraic topology \cite{Hat02}.

\begin{theorem} \label{t:b0cov}
The Bogomolov multiplier of a CP cover is trivial.
\end{theorem}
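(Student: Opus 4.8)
The plan is to use the explicit description of a CP cover from Theorem~\ref{t:cpcovers} together with the Hopf-type formula for $\Bt_0$ and the characterization of CP subgroups in Proposition~\ref{p:cpm0}. Let $Q = F/R$ be a free presentation, set $H = F/\langle \K(F)\cap R\rangle$ and $A = R/\langle \K(F)\cap R\rangle$ as in Theorem~\ref{t:cpcovers}, and let $C$ be a complement to $T(A)$ in $A$, so that a CP cover is $G = H/C$. Since $C$ is a free presentation kernel situation again, I would write $G = F/\bar R$ where $\bar R$ is the preimage of $C$ in $F$; concretely $\bar R \supseteq \langle \K(F)\cap R\rangle$ and $\bar R / \langle \K(F)\cap R\rangle = C$. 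The first task is to compute $\langle \K(F)\cap \bar R\rangle$. Because $C$ is a complement to the torsion subgroup $T(A) = ([F,F]\cap R)/\langle \K(F)\cap R\rangle$, the subgroup $C$ meets $[F,F]/\langle \K(F)\cap R\rangle$ trivially; pulling back, $\bar R \cap [F,F] \leq \langle \K(F)\cap R\rangle$. Now $\K(F)\cap \bar R \subseteq [F,F]\cap \bar R \leq \langle \K(F)\cap R\rangle$, and conversely $\langle \K(F)\cap R\rangle \subseteq \bar R$ with every commutator generator of it lying in $\K(F)$, so $\langle \K(F)\cap \bar R\rangle = \langle \K(F)\cap R\rangle$.

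With this identity in hand, the Hopf formula for the Bogomolov multiplier from \cite{Mor12} gives
\begin{equation*}
\Bt_0(G) = \frac{[F,F]\cap \bar R}{\langle \K(F)\cap \bar R\rangle} = \frac{[F,F]\cap \bar R}{\langle \K(F)\cap R\rangle}.
\end{equation*}
But we have just seen that $[F,F]\cap \bar R \leq \langle \K(F)\cap R\rangle$, so the numerator is contained in the denominator and the quotient is trivial. Hence $\Bt_0(G) = 1$.

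Alternatively — and this is the conceptually cleaner route I would actually present — one can argue via Proposition~\ref{p:cpm0} applied twice. The extension $H \to H/A = Q$ realizes $A$ as a central subgroup, and by construction $\K(H)\cap A = 1$ (this is exactly the content of $\langle \K(F)\cap R\rangle$ having been divided out), so $A$ is a CP subgroup of $H$; moreover $\Bt_0(H) = 1$ since $[F,F]\cap \langle \K(F)\cap R\rangle = \langle \K(F)\cap R\rangle$ already. Then $C \leq A$ is again central in $H$ with $\K(H)\cap C \leq \K(H)\cap A = 1$, so $C$ is a CP subgroup of $H$, and Lemma~\ref{l:b0cp} applied to the pair $(H, C)$ gives the exact sequence
\begin{equation*}
0 \longrightarrow \Bt_0(H) \longrightarrow \Bt_0(H/C) \longrightarrow C\cap [H,H] \longrightarrow 0.
\end{equation*}
Since $\Bt_0(H) = 1$ and $C \cap [H,H] = 1$ (because $C$ is a complement to the torsion part $T(A) = A \cap [H,H]$), we conclude $\Bt_0(H/C) = \Bt_0(G) = 1$.

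The main obstacle, and the point deserving the most care, is the bookkeeping showing $\langle \K(F)\cap \bar R\rangle = \langle \K(F)\cap R\rangle$, equivalently $C\cap[H,H]=1$ and $\K(H)\cap C = 1$: one must be sure that passing to the complement $C$ does not accidentally enlarge the subgroup generated by commuting-element wedges, and that no new commutators of $H$ sneak into $C$. This is where the hypothesis that $C$ is a \emph{complement to the torsion subgroup} $T(A) \cong \Bt_0(Q)$ (rather than an arbitrary subgroup of $A$) is essential, since $T(A) = A \cap [H,H]$ is precisely the obstruction that must be killed. Everything else is a direct application of the Hopf formula and Lemma~\ref{l:b0cp}.
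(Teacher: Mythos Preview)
Your proof is correct. Both routes work: the Hopf-formula computation showing $[F,F]\cap\bar R=\langle\K(F)\cap R\rangle$ is clean and the bookkeeping is right (using $\bar R\subseteq R$ and $C\cap T(A)=1$), and the alternative via Lemma~\ref{l:b0cp} is equally valid once you note $A\cap[H,H]=T(A)$. You do implicitly rely on part (4) of Theorem~\ref{t:cpcovers} to know that an arbitrary CP cover has the form $H/C$, but that is legitimate.

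The paper argues quite differently. Rather than unpacking a free presentation, it takes a CP cover $G$ of $Q$, then a CP cover $H$ of $G$ (so $H\to G\to Q$ with kernels $M\cong\Bt_0(G)$ and $N\cong\Bt_0(Q)$), and checks that the composite $H\to Q$ is again a stem central CP extension, with kernel $L=\pi^{-1}(N)$ of order $|\Bt_0(G)|\cdot|\Bt_0(Q)|$. The maximality in Theorem~\ref{t:cpcovers}(3) then forces $|L|\le|\Bt_0(Q)|$, whence $\Bt_0(G)=1$. This ``cover the cover'' argument is more conceptual and presentation-free, exploiting only the maximality characterization of CP covers; your argument is more computational but also more self-contained, since it does not need to construct a second cover or invoke the size bound, and your second route makes transparent exactly which structural facts ($\Bt_0(H)=1$ and $C\cap[H,H]=1$) drive the conclusion.
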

\begin{proof}
Let $G$ be a CP cover of $Q$ with kernel $N \cong \Bt_0(Q)$ satisfying
$N \leq Z(G) \cap [G,G]$ and $N \cap \K(G) = 1$.  Consider a CP cover 
$\xymatrix{H\ar[r]^\pi & G}$ with kernel $M \cong \Bt_0(G)$ satisfying
$M \leq Z(H) \cap [H,H]$ and $M \cap \K(H) = 1$.  The group $H$ is a central
extension of $L = \pi^{-1}(N)$ by $Q$, since $\pi$ preserves commutativity.
Moreover, we have $L \leq \pi^{-1}([G,G]) = [H,H]$ since $M \leq [H,H]$, and 
$L \cap \K(H) \leq \pi^{-1}(N \cap \K(G)) \cap \K(H) \leq M \cap \K(H) = 1$.  We
conclude that $H$ is a stem central CP extension of $L$ by $Q$, therefore
$|L| \leq |\Bt_0(Q)|$ by Theorem \ref{t:cpcovers}, and so $L \cong \Bt_0(Q)$.
This implies $M = \Bt_0(G) = 1$, as required.
\end{proof}

Note that a similar proof gives that the Bogomolov multiplier of a Schur
covering group is also trivial, see \cite[Lemma 2.4.1]{Fri91}.

For further use of Theorem \ref{t:b0cov}, we record a straightforward corollary
of Lemma \ref{l:b0cp}.

\begin{lemma}
\label{l:being-a-CP-cover}
Whenever $N$ is a central CP subgroup of a group $G$ with $\Bt_0(G) = 0$, then
$\Bt_0(G/N) \cong N \cap [G,G]$. If in addition $N \leq [G,G]$, then the group
$G$ is a CP cover of $G/N$ with kernel $N \cong \Bt_0(G/N)$.
\end{lemma}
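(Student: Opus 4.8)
The plan is to apply Lemma \ref{l:b0cp} directly to the central CP subgroup $N \leq G$, and then read off both conclusions from the resulting exact sequences together with the hypothesis $\Bt_0(G) = 0$.

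First I would invoke the first exact sequence of Lemma \ref{l:b0cp}, namely
\[
0 \longrightarrow \Bt_0(G) \longrightarrow \Bt_0(G/N) \longrightarrow N \cap G' \longrightarrow 0.
\]
Substituting $\Bt_0(G) = 0$, the map $\Bt_0(G/N) \to N \cap [G,G]$ becomes an isomorphism, which gives the first claim $\Bt_0(G/N) \cong N \cap [G,G]$. For the second claim, suppose in addition $N \leq [G,G]$, so that $N \cap [G,G] = N$ and hence $N \cong \Bt_0(G/N)$. It remains to check that $G$ qualifies as a CP cover of $G/N$ in the sense defined before Theorem \ref{t:cpcovers}: it must be a stem central CP extension of $N$ by $G/N$ with $|N| = |\Bt_0(G/N)|$. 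Centrality and the CP property are part of the hypothesis that $N$ is a central CP subgroup; the stem condition $\chi(N) \leq [G,G]$ is precisely the extra assumption $N \leq [G,G]$; and $|N| = |\Bt_0(G/N)|$ follows from the isomorphism $N \cong \Bt_0(G/N)$ just established (here one uses that $\Bt_0$ of a finite group is finite, or more simply that an isomorphism forces equality of orders). Thus $G$ is a CP cover of $G/N$ with kernel $N \cong \Bt_0(G/N)$, as required.

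This argument is essentially a bookkeeping exercise once Lemma \ref{l:b0cp} is in hand, so I do not anticipate any serious obstacle. The only point that warrants a moment of care is matching the vanishing hypothesis $\Bt_0(G) = 0$ against the leftmost term of the exact sequence in Lemma \ref{l:b0cp} to conclude that the connecting map is an \emph{isomorphism} rather than merely a surjection — but this is immediate from exactness. One should also make sure the definition of CP cover is being applied to the correct extension, namely $1 \to N \to G \to G/N \to 1$ rather than anything involving a larger group; this is exactly the extension whose CP property is guaranteed by the hypothesis that $N$ is a central CP subgroup of $G$.
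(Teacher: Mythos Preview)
Your proposal is correct and matches the paper's approach exactly: the paper does not even spell out a proof, merely recording the lemma as ``a straightforward corollary of Lemma~\ref{l:b0cp}'', which is precisely what you do. Your added verification that the definition of CP cover is satisfied is a helpful explicit check that the paper leaves to the reader.
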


It follows readily that central CP extensions behave much as topological
covering spaces.

\begin{corollary}
\label{c:covering-seq}
Let $Q$ be a group and $G$ a CP cover of $Q$. For every filtration of subgroups
$1 = N_0 \leq N_1 \leq \dots \leq N_n = \B_0(Q)$, there is a corresponding
sequence of groups $G_i = G/N_i$, where $G_i$ is a central CP extension of $G_j$
with kernel $N_j/N_i \cong \B_0(G_j)/\B_0(G_i)$  whenever $i \leq j$.
\end{corollary}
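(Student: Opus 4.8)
The plan is to derive everything from Theorem~\ref{t:b0cov}, Lemma~\ref{l:being-a-CP-cover}, and the exactness properties in Lemma~\ref{l:b0cp}, by a straightforward induction on the filtration. First I would observe that since $G$ is a CP cover of $Q$, the kernel $\B_0(Q)$ sits inside $Z(G)\cap[G,G]$ and meets $\K(G)$ trivially, and by Theorem~\ref{t:b0cov} we have $\B_0(G)=0$. Each $N_i$ is a subgroup of $\B_0(Q)\le Z(G)$, hence central in $G$, and $N_i\cap\K(G)\le\B_0(Q)\cap\K(G)=1$, so $N_i$ is a central CP subgroup of $G$ for every $i$. The quotient $G_i=G/N_i$ is therefore well defined and, by Proposition~\ref{p:cp}, the extension $1\to\B_0(Q)/N_i\to G_i\to Q\to1$ is a central CP extension.

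Next I would apply Lemma~\ref{l:being-a-CP-cover} with the pair $(G,N_i)$: since $\B_0(G)=0$, $N_i$ is a central CP subgroup, and $N_i\le[G,G]$, the lemma gives that $G_i$ is a CP cover of $G/N_i=\cdots$ wait---more precisely it gives $\B_0(G_i)\cong N_i\cap[G,G]=N_i$ (as $N_i\le[G,G]$), and moreover that $G$ is a CP cover of $G_i$. In particular $\B_0(G_i)\cong N_i$ canonically, so the chain $\B_0(G_0)=0\le\B_0(G_1)\le\cdots\le\B_0(G_n)=\B_0(Q)$ is identified with the original filtration $N_0\le N_1\le\cdots\le N_n$. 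Now fix $i\le j$. The kernel of the canonical map $G_i=G/N_i\to G/N_j=G_j$ is $N_j/N_i$, which is central in $G_i$ (image of a central subgroup) and meets $\K(G_i)$ trivially: indeed any commutator of $G_i$ lifting to $\K(G)$ lands in $N_j/N_i$ only if the corresponding commutator in $G$ lies in $N_j\cap\K(G)=1$. Hence $N_j/N_i$ is a central CP subgroup of $G_i$, and the extension $1\to N_j/N_i\to G_i\to G_j\to1$ is a central CP extension by Proposition~\ref{p:cp}.

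It remains to identify $N_j/N_i$ with $\B_0(G_j)/\B_0(G_i)$. Here I would invoke Lemma~\ref{l:b0cp} applied to the central CP subgroup $N_j/N_i$ of $G_i$, which yields the exact sequence
\[
0\longrightarrow\B_0(G_i)\longrightarrow\B_0(G_j)\longrightarrow (N_j/N_i)\cap G_i'\longrightarrow 0.
\]
Since $N_j/N_i\le[G_i,G_i]$ (because $N_j\le[G,G]$ and $[G,G]/N_i=[G_i,G_i]$), the rightmost term is just $N_j/N_i$, and the injection $\B_0(G_i)\hookrightarrow\B_0(G_j)$ is the one matching the inclusion $N_i\hookrightarrow N_j$ under the identifications of the previous paragraph. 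Therefore $N_j/N_i\cong\B_0(G_j)/\B_0(G_i)$, canonically, completing the proof. The only point requiring a little care---and the step I would flag as the main obstacle---is checking that the identifications $\B_0(G_i)\cong N_i$ coming from different applications of Lemma~\ref{l:being-a-CP-cover} and Lemma~\ref{l:b0cp} are mutually compatible, i.e.\ that the diagram of inclusions commutes; this is where one must track the maps through the Hopf-formula descriptions in the proofs of those lemmas rather than merely quoting the isomorphism types, but it is a routine diagram chase.
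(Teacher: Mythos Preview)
Your proposal is correct and is precisely the argument the paper has in mind: the corollary is stated without proof, introduced only by the remark that it ``follows readily'' from Lemma~\ref{l:being-a-CP-cover}, and your write-up is a faithful expansion of that using Theorem~\ref{t:b0cov}, Proposition~\ref{p:cp}, and Lemma~\ref{l:b0cp}. The compatibility check you flag is indeed routine once one notes that all the maps in play are induced by the canonical projections $G\to G_i\to G_j$ and hence commute on the nose in the Hopf-formula model.
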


We now explore CP covers with respect to isoclinism. At first we list some auxiliary results.

\begin{lemma}
\label{l:cpequiv}
Let
\begin{equation*}
e: \,\, \xymatrix{ 1\ar[r] & N\ar[r]^{\chi} & G\ar[r]^{\pi} & Q\ar[r] & 1}
\end{equation*}
be a central CP extension. Then $\pi(Z(G))=Z(Q)$ and $Z(G) \cong  N \times Z(Q)$.
\end{lemma}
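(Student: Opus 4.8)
The plan is to work directly with the central CP extension $e$ and exploit Proposition \ref{p:cp}, which tells us that $\chi(N) \cap \K(G) = 1$. Identify $N$ with its image $M = \chi(N)$, a central subgroup of $G$. First I would establish the easy inclusion $\pi(Z(G)) \leq Z(Q)$, which holds for any surjection. For the reverse inclusion, take $x \in Z(Q)$ and a lift $g \in G$ with $\pi(g) = x$; then for every $h \in G$ the commutator $[g,h]$ lies in $\ker \pi = M$, and it also lies in $\K(G)$ by construction, so $[g,h] \in M \cap \K(G) = 1$. Hence $g \in Z(G)$ and $x = \pi(g) \in \pi(Z(G))$. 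This proves $\pi(Z(G)) = Z(Q)$.

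For the structural statement, the key observation is that $M \leq Z(G)$ and $\pi|_{Z(G)} \colon Z(G) \to Z(Q)$ is surjective with kernel exactly $Z(G) \cap \ker \pi = Z(G) \cap M = M$. So we have a short exact sequence $1 \to M \to Z(G) \to Z(Q) \to 1$ of abelian groups, and it remains to show it splits. The plan here is to produce a splitting using the CP hypothesis again: since $Z(Q)$ is abelian, \emph{every} pair of its elements commutes, so every pair of lifts in $\pi^{-1}(Z(Q))$ that we need can be chosen commuting — more precisely, the CP condition applied across a generating set of $Z(Q)$ lets us choose pairwise-commuting lifts. I would argue that the subgroup $\langle g_i \rangle$ generated by suitably chosen lifts $g_i$ of generators of $Z(Q)$ is abelian and, being built from lifts that genuinely centralize each other and whose images centralize all of $Q$, lands inside $Z(G)$; this gives a complement to $M$. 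In the finitely generated case one can be fully explicit; in general one invokes that an extension of $Z(Q)$ by the central (hence $Q$-trivial) module $M$ which is CP corresponds, via Proposition \ref{p:classext} and Example \ref{e:ext}, to an element of $\Ext(Z(Q), M)$ that must vanish — but care is needed since $\Ext(Z(Q),M)$ need not be zero, so the splitting really has to come from the CP structure of the ambient $G$, not from abstract nonsense about $Z(Q)$ alone.

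The main obstacle I anticipate is precisely this last point: showing the sequence $1 \to M \to Z(G) \to Z(Q) \to 1$ splits. It is tempting but wrong to say "$Z(Q)$ abelian $\Rightarrow$ the extension splits"; the splitting must be extracted from the fact that $Z(G)$ sits inside the CP extension $G$. The cleanest route is: for each pair of elements of $Z(Q)$, CP-ness of $e$ gives commuting lifts, and one shows the centralizer $C_G(G) = Z(G)$ maps onto $Z(Q)$ with the lifts assembling into a subgroup isomorphic to $Z(Q)$ by a transfinite or Zorn-type argument when $Q$ is infinite; for the finite groups that are the paper's main concern this is immediate. Once the splitting is in hand, $Z(G) = M \times C \cong N \times Z(Q)$ follows at once.

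(If one only needs the statement for finite $Q$, the whole argument is short: pick $g_1, \dots, g_k \in G$ lifting generators $x_1, \dots, x_k$ of $Z(Q)$; since each $x_i \in Z(Q)$ the commutators $[g_i, g_j]$ and $[g_i, h]$ all lie in $M \cap \K(G) = 1$, so $\langle g_1, \dots, g_k \rangle$ is an abelian subgroup of $Z(G)$ mapping onto $Z(Q)$, and intersecting $M$ trivially after a finite correction; hence $Z(G) = M \times \langle g_1, \dots, g_k \rangle$.)
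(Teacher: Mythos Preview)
Your argument for $\pi(Z(G)) = Z(Q)$ is correct and in fact more direct than the paper's. The paper replaces $e$ by an equivalent extension $Q[\omega]$ and computes that $(n,q) \in Z(Q[\omega])$ if and only if $q \in Z(Q)$, using that a CP cocycle with trivial coefficients satisfies $\omega(x,y)=\omega(y,x)$ on commuting pairs; you reach the same conclusion intrinsically from $\chi(N) \cap \K(G) = 1$ via Proposition~\ref{p:cp}. These are the same computation in different coordinates, and yours avoids the detour through an explicit cocycle model.

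For the splitting $Z(G) \cong N \times Z(Q)$, your caution is warranted---indeed the claim is \emph{false} as stated, so no argument can close the gap. Take $Q = C_p$, $N = C_p$, $G = C_{p^2}$ with the natural projection: this is a central CP extension (both groups are abelian, so $\chi(N)\cap\K(G)=1$ trivially), yet $Z(G)=C_{p^2}\not\cong C_p\times C_p = N\times Z(Q)$. Your ``finite correction'' cannot repair this, because the lifts $g_i$ of generators of $Z(Q)$ may have strictly larger order than their images and nothing in the CP hypothesis prevents that. The paper's own proof has the same lacuna: it identifies $Z(Q[\omega])$ with $N\times Z(Q)$ only \emph{as a set} and never addresses the twisted group law $(a,x)(b,y)=(a+b+\omega(x,y),xy)$ on that set. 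What both arguments genuinely establish is the short exact sequence $1\to N\to Z(G)\to Z(Q)\to 1$; this (or at least the order equality $|Z(G)|=|N|\cdot|Z(Q)|$) appears to be all that is actually used downstream in Lemma~\ref{l:cpcovcenterstem}.
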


\begin{proof}
It is straightforward to see that if
\[
e_i: \,\, \xymatrix{ 1\ar[r] & N\ar[r]^{\chi _i} & G_i\ar[r]^{\pi _i} & Q\ar[r] & 1}
\]
are equivalent
central extensions for $i=1,2$, then $\pi_1(Z(G_1))=\pi_2(Z(G_2))$. Thus we may replace the
extension $e$ by the extension
\begin{equation*}
\xymatrix{ 1\ar[r] & N\ar[r] & G[\omega]\ar[r]^\epsilon & Q\ar[r] & 1},
\end{equation*}
that is obtained similarly as in the proof of Proposition \ref{p:classext}. As
$\omega\in\ZCP^2(Q,N)$, the condition that $(n,q)\in Z(G[\omega])$ is equivalent
to $q\in Z(Q)$. Therefore $\epsilon (Z(G[\omega]))=Z(Q)$.
\end{proof}

\begin{lemma} \label{l:cpcovcenterstem}
Let $G$ be a CP cover of $Q$. Then $Z(G) \cong Z(Q) \times \Bt_0(Q)$, and $G$ is
stem if and only if $Q$ is stem.
\end{lemma}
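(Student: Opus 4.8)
The first assertion is the special case $N \cong \Bt_0(Q)$ of Lemma \ref{l:cpequiv}: applying that lemma to a CP cover $G$ of $Q$, whose kernel $N$ is isomorphic to $\Bt_0(Q)$ by Theorem \ref{t:cpcovers}(4), immediately yields $Z(G) \cong Z(Q) \times \Bt_0(Q)$. So the only real content is the equivalence ``$G$ is stem $\iff$ $Q$ is stem''. The plan is to prove both implications using the structure of a CP cover, namely that its kernel $N$ satisfies $N \le Z(G) \cap [G,G]$ and $N \cap \K(G) = 1$, together with the identities $\pi([G,G]) = [Q,Q]$ and $\pi(Z(G)) = Z(Q)$ (the latter from Lemma \ref{l:cpequiv}).

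For the forward direction, suppose $G$ is stem, i.e. $Z(G) \le [G,G]$. Then $Z(Q) = \pi(Z(G)) \le \pi([G,G]) = [Q,Q]$, so $Q$ is stem. For the converse, suppose $Q$ is stem, i.e. $Z(Q) \le [Q,Q]$. Take $z \in Z(G)$; I want $z \in [G,G]$. Since $\pi(z) \in Z(Q) \le [Q,Q] = \pi([G,G])$, there is $w \in [G,G]$ with $\pi(w) = \pi(z)$, hence $z w^{-1} \in \ker \pi = N$. But $N \le [G,G]$ because $G$ is a CP cover, so $z = (zw^{-1})w \in [G,G]$. Thus $Z(G) \le [G,G]$ and $G$ is stem. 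This handles both directions cleanly; the key point that makes it work is precisely the stem condition $N \le [G,G]$ built into the definition of a CP cover, which lets one lift the inclusion $Z(Q) \le [Q,Q]$ back up through $\pi$.

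I do not anticipate a serious obstacle here: everything reduces to the surjectivity of $\pi$ on commutator subgroups (automatic) and on centers (Lemma \ref{l:cpequiv}), plus the defining properties of a CP cover. The only subtlety worth double-checking is that $Z(G) \cong Z(Q) \times \Bt_0(Q)$ is genuinely an application of Lemma \ref{l:cpequiv} and not circular — but that lemma was proved for arbitrary central CP extensions with no stem hypothesis, so there is no issue.
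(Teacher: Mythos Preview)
Your proof is correct and follows essentially the same approach as the paper's: the first part is Lemma \ref{l:cpequiv} specialized to $N\cong\Bt_0(Q)$, and the second part uses $\pi(Z(G))=Z(Q)$ from that lemma together with the stem condition $N\le [G,G]$ of a CP cover. The paper compresses the second part into one sentence (``follows from the first one and the fact that $\Bt_0(Q)\le [G,G]$''), while you write out the element chase explicitly, but the argument is the same.
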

\begin{proof}
The first part follows from Lemma \ref{l:cpequiv}. The second part then follows
from the first one and the fact that $\Bt_0(Q) \leq [G,G]$.
\end{proof}

It follows from the latter lemma that the central quotient of a CP cover is
naturally isomorphic to the central quotient of the base group, and so the
nilpotency class of a CP cover does not exceed that of the base group. This is
all a special case of the following observation.

\begin{proposition} \label{p:coversareisocl}
CP covers of isoclinic groups are isoclinic.
\end{proposition}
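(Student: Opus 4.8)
\textbf{Proof plan for Proposition \ref{p:coversareisocl}.}

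The plan is to reduce the statement to a comparison of the relevant data of the CP covers in terms of the data of the base groups, exploiting the classification of central CP extensions up to isoclinism already established. First I would fix an isoclinism $(\eta, \xi)$ between the two base groups $Q_1$ and $Q_2$, with $\eta \colon Q_1 \to Q_2$ an isomorphism and $\xi \colon Q_1' \to Q_2'$ compatible with the commutator maps. Since $\B_0$ is an isoclinism invariant (indeed, by \cite{Mor12} it depends only on the isoclinism class, being built from the commutator map), the isomorphism $\eta$ induces an isomorphism $\B_0(\eta) \colon \B_0(Q_1) \to \B_0(Q_2)$; alternatively one observes that $\eta$ induces an isomorphism $\M_0(\eta)$ compatible with $\xi$, hence an isomorphism on the quotients $\B_0$. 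This is the input that makes the orbit correspondence of Theorem \ref{t:isoclasses} usable.

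Next I would invoke the characterization of CP covers from Theorem \ref{t:cpcovers}(6) together with Theorem \ref{t:isoclasses}. A CP cover of $Q_i$ corresponds, up to isoclinism, to the trivial subgroup $1 \leq \B_0(Q_i)$: indeed a CP cover is a stem central CP extension whose kernel has order $|\B_0(Q_i)|$, which by Lemma \ref{l:isostemext} and Theorem \ref{t:cpcovers}(3)--(4) forces $\ker\tilde\theta(e) = 1$ in the five-term sequence of Lemma \ref{l:B0exact}. Under the bijection of Theorem \ref{t:isoclasses} between isoclinism classes of central CP extensions with factor group $Q_i$ and $\Aut Q_i$-orbits of subgroups of $\B_0(Q_i)$, the isoclinism class of a CP cover of $Q_i$ is precisely the orbit of the zero subgroup. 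Now the isomorphism $\eta$ carries the zero subgroup of $\B_0(Q_1)$ to the zero subgroup of $\B_0(Q_2)$ via $\B_0(\eta)$, and intertwines the $\Aut Q_1$-action with the $\Aut Q_2$-action; hence it matches the two isoclinism classes. Therefore a CP cover $G_1$ of $Q_1$ and a CP cover $G_2$ of $Q_2$ are isoclinic.

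The one point requiring care — and the main obstacle — is that Theorem \ref{t:isoclasses} classifies central CP extensions of a \emph{fixed} group $Q$ up to isoclinism of extensions; to conclude isoclinism of the groups $G_1$ and $G_2$ themselves, I must upgrade isoclinism of extensions to isoclinism of total groups. This is handled by the standard fact \cite[Proposition III.2.3]{Bey82} (already used in the proof of Theorem \ref{t:isoclasses}) that for central extensions, isoclinism of extensions is equivalent to the middle groups being isoclinic in a way compatible with the quotients; combined with Lemma \ref{l:cpcovcenterstem}, which tells us the central quotients of $G_i$ and $Q_i$ agree, one gets a direct isoclinism $G_1 \sim G_2$. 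A clean alternative route, avoiding the orbit formalism, is to argue directly: given the isoclinism $(\eta,\xi)$ of $Q_1,Q_2$, lift it along the CP covers using Theorem \ref{t:cpcovers}(3), which presents each $G_i$ as a quotient of the universal object $H_i = F_i/\langle\K(F_i)\cap R_i\rangle$; the isoclinism of the base groups, together with the functoriality of the Hopf-type formula $\B_0(Q_i) = (F_i'\cap R_i)/\langle\K(F_i)\cap R_i\rangle$, produces compatible identifications of the $H_i'$ and hence of the $G_i'$, and one checks the induced commutator square commutes. I would present the first (orbit-theoretic) argument as the main line and remark that the second is available, since both rest on machinery already in place.
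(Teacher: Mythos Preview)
There is a genuine gap: you have misread the hypothesis. An isoclinism between groups $Q_1$ and $Q_2$ is \emph{not} an isomorphism $\eta\colon Q_1\to Q_2$; it is a pair of isomorphisms $\eta\colon Q_1/Z(Q_1)\to Q_2/Z(Q_2)$ and $\xi\colon [Q_1,Q_1]\to [Q_2,Q_2]$ compatible with the commutator maps. (The definition of isoclinism of \emph{extensions} earlier in the paper does require the factor groups to be isomorphic, but that is a different notion.) Your entire first line of argument via Theorem~\ref{t:isoclasses} collapses here: that theorem classifies CP extensions with factor group isomorphic to a fixed $Q$, so it has nothing to say when $Q_1$ and $Q_2$ are merely isoclinic and not isomorphic. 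Likewise there is no map $\B_0(\eta)$ induced by an isomorphism $\eta\colon Q_1\to Q_2$ to transport the orbit of the trivial subgroup. Your alternative free-presentation route has the same defect: functoriality of the Hopf formula along an isomorphism of groups is not what is available.

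The paper's proof confronts this directly. It uses two ingredients that survive under mere isoclinism: first, Lemma~\ref{l:cpequiv}/\ref{l:cpcovcenterstem} gives isomorphisms $G_i/Z(G_i)\cong Q_i/Z(Q_i)$, so the central-quotient isomorphism $\eta$ lifts to $\tilde\alpha\colon G_2/Z(G_2)\to G_1/Z(G_1)$; second, and this is the key point you are missing, Theorem~\ref{t:cpcovers} gives $[G_i,G_i]\cong Q_i\curlywedge Q_i$, and it is a result of \cite{Mor13} that an isoclinism $Q_1\sim Q_2$ induces an isomorphism $Q_1\curlywedge Q_1\cong Q_2\curlywedge Q_2$. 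Composing yields $\tilde\beta\colon [G_2,G_2]\to [G_1,G_1]$, and one checks compatibility. The curly exterior square is precisely the gadget that is invariant under isoclinism rather than isomorphism, and that is what makes the argument go through.
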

\begin{proof}
Let $G_1$ be a CP cover of a group $Q_1$ with the covering projection $p_1
\colon G_1 \to Q_1$ and let $Q_2$ be isoclinic to $Q_1$ via the compatible pair
of isomorphisms $\alpha \colon Q_2/Z(Q_2) \to Q_1/Z(Q_1)$ and 
$\beta \colon [Q_2, Q_2] \to [Q_1,Q_1]$.  Let $G_2$ be a CP cover of $Q_2$ with
the covering projection $p_2 \colon G_2 \to Q_2$. We show that $G_2$ is
isoclinic to $G_1$. To this end, let $\bar p_i \colon G_i/Z(G_i) \to Q_i/Z(Q_i)$
be the natural homomorphisms induced by $p_i$'s. Lemma \ref{l:isostemext} implies
that $\bar p_i$ is in fact an isomorphism. Define
$\tilde \alpha \colon G_2/Z(G_2) \to G_1/Z(G_1)$ as
$\tilde \alpha = (\bar p_1)^{-1} \alpha \bar p_2$. This is clearly an
isomorphism. Next, observe that Theorem \ref{t:cpcovers} shows that the covering
projections $p_i$ also induce isomorphisms 
$p_i \curlywedge p_i \colon [G_i, G_i] \to Q_i \curlywedge Q_i$ defined as
$[x,y] \mapsto p_i(x) \curlywedge p_i(y)$. Furthermore, it is shown in
\cite{Mor13} that $\alpha$ induces an isomorphism
$\alpha^{\curlywedge} \colon Q_2 \curlywedge Q_2 \to Q_1 \curlywedge Q_1$ via
$\alpha^{\curlywedge}(x_1 \curlywedge x_2) =  y_1 \curlywedge y_2$, where
$y_iZ(Q_1) = \alpha(x_i Z(Q_2))$. Now define 
$\tilde \beta \colon [G_2, G_2] \to [G_1, G_1]$ as
$\tilde \beta = (p_2 \curlywedge p_2) \alpha^{\curlywedge} (p_1 \curlywedge p_1)^{-1}$. 
This is clearly an isomorphism, and it readily follows from the compatibility
relations between $\alpha$ and $\beta$ that the isomorphisms $\tilde \alpha$ and
$\tilde \beta$ are also compatible. These induce an isoclinism between the CP
covers $G_1$ and $G_2$.
\end{proof}

As a corollary, the derived subgroup of a CP cover is uniquely determined. Note
that given a group $Q$ and its CP cover $G$, we have 
$[G,G] \cong Q \curlywedge Q$ by Theorem \ref{t:cpcovers}. In particular, groups
belonging to the same isoclinism family have naturally isomorphic curly exterior
squares, and therefore also Bogomolov multipliers.

Let $\Phi$ be an isoclinism family of finite groups, referred to as the {\em
base family}, and let $G$ be an arbitrary group in $\Phi$. By Proposition
\ref{p:coversareisocl}, CP covers of $G$ all belong to the same isoclinism
family. We denote this family by $\tilde \Phi$ and call it the {\em covering
family} of $\Phi$.

\begin{proposition} \label{p:covisoclin}
Every group in a covering family is a CP cover of a group in the base family.
\end{proposition}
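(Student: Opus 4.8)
The statement to prove is the converse of Proposition \ref{p:coversareisocl}: every group $\tilde G$ in the covering family $\tilde\Phi$ is a CP cover of \emph{some} group in the base family $\Phi$. The natural strategy is to start from what we already know and reverse-engineer the base group. By definition $\tilde\Phi$ is the isoclinism family of the CP covers of a fixed group $G \in \Phi$. So fix such a CP cover $G_0$ of $G$, with covering projection $p \colon G_0 \to G$ and kernel $N \cong \Bt_0(G)$, $N \leq Z(G_0) \cap [G_0,G_0]$, $N \cap \K(G_0) = 1$. An arbitrary member $\tilde G \in \tilde\Phi$ is then isoclinic to $G_0$ via a compatible pair of isomorphisms $\tilde\alpha \colon \tilde G/Z(\tilde G) \to G_0/Z(G_0)$ and $\tilde\beta \colon [\tilde G,\tilde G] \to [G_0,G_0]$. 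The plan is to produce a group $Q \in \Phi$ together with an epimorphism $\tilde G \to Q$ exhibiting $\tilde G$ as a CP cover of $Q$.

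First I would identify the candidate kernel inside $\tilde G$. Since $N \leq Z(G_0) \cap [G_0,G_0]$, under $\tilde\beta^{-1}$ it corresponds to a subgroup $\tilde N \leq [\tilde G,\tilde G]$; using Lemma \ref{l:cpcovcenterstem} (which gives $Z(G_0) \cong Z(G) \times \Bt_0(G)$ and the analogous center decomposition for $\tilde G$ once we know $\tilde G$ is a CP cover — but here we only know $\tilde G$ is isoclinic to a CP cover) one checks that $\tilde N$ is central in $\tilde G$: centrality is detected in $G_0/Z(G_0)$ paired against all of $G_0$, and the compatible pair transports this. Set $Q = \tilde G/\tilde N$. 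The key point is then to verify three things: (i) $Q$ is isoclinic to $G$, hence lies in $\Phi$; (ii) $\tilde N$ is a central CP subgroup of $\tilde G$, i.e. $\tilde N \cap \K(\tilde G) = 1$ (Proposition \ref{p:cp}); and (iii) $|\tilde N| = |\Bt_0(Q)|$ and $\tilde N \leq [\tilde G,\tilde G]$, so that $\tilde G$ is genuinely a \emph{CP cover} of $Q$ and not merely a stem central CP extension.

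For (i): the quotient map $\tilde G \to Q$ kills $\tilde N \leq [\tilde G,\tilde G]$, so it induces an isomorphism on central quotients modulo the image of $\tilde N$, and since $\tilde\beta$ matches $\tilde N$ with $N = \ker p \cap [G_0,G_0]$, the isoclinism $\tilde G \sim G_0$ descends to an isoclinism $Q \sim G_0/N = G$; concretely one composes $\tilde\alpha$ with the isomorphism $G_0/Z(G_0) \xrightarrow{\sim} G/Z(G)$ induced by $p$ (an isomorphism by Lemma \ref{l:isostemext}) on the one side, and $\tilde\beta$ with $p\cwedge p$-type considerations on the derived subgroups. For (ii): $\K(\tilde G)$ is generated by commutators $[x,y]$ with $[x,y] \in Z(\tilde G)$-detectable relations — more precisely, $\tilde\beta$ carries $\K(\tilde G)$ isomorphically onto $\K(G_0)$ because $\K(-)$ is computed entirely from the commutator map $Q \times Q \to [G,G]$ that an isoclinism preserves (this is implicit in Proposition \ref{p:isoclin}); then $\tilde N \cap \K(\tilde G)$ maps to $N \cap \K(G_0) = 1$, and injectivity of $\tilde\beta$ gives $\tilde N \cap \K(\tilde G) = 1$. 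For (iii): once (ii) is in hand, Lemma \ref{l:being-a-CP-cover} applies provided $\Bt_0(\tilde G) = 0$; but $\tilde G$ is isoclinic to the CP cover $G_0$, and $\Bt_0(G_0) = 0$ by Theorem \ref{t:b0cov}, and Bogomolov multipliers are isoclinism invariants (stated in the excerpt just before Proposition \ref{p:covisoclin}), so $\Bt_0(\tilde G) = 0$; Lemma \ref{l:being-a-CP-cover} then yields $\Bt_0(Q) \cong \tilde N \cap [\tilde G,\tilde G] = \tilde N$ (as $\tilde N \leq [\tilde G,\tilde G]$), so $|\tilde N| = |\Bt_0(Q)|$ and $\tilde G$ is a CP cover of $Q$.

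The main obstacle I anticipate is (ii) together with the bookkeeping in (i): showing that an isoclinism preserves the subgroup $\K(-)$ and the relevant structural subgroups in a way compatible with the chosen compatible pair requires care, since $\K(G)$ is not literally a term in the isoclinism data but is reconstructible from the commutator map. The cleanest route is probably to observe that $\K(\tilde G) = \langle [x,y] : [\bar x, \bar y] = 1 \text{ in } \tilde G/Z(\tilde G)\rangle$ where $\bar x$ denotes the image, so that $\tilde\beta(\K(\tilde G)) = \K(G_0)$ follows directly from compatibility of $(\tilde\alpha,\tilde\beta)$; then everything else is routine diagram-chasing with the exact sequences already established (Lemmas \ref{l:b0cp}, \ref{l:B0exact}) and the center computation of Lemma \ref{l:cpcovcenterstem}.
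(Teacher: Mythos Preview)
Your proposal is correct and follows essentially the same route as the paper: transport the kernel $N\cong\Bt_0(Q_1)$ across the isoclinism via $\beta^{-1}$, check it is central and meets $\K$ trivially, then invoke $\Bt_0(\tilde G)=0$ (via Theorem~\ref{t:b0cov} and isoclinism invariance) together with Lemma~\ref{l:being-a-CP-cover} to conclude that $\tilde G$ is a CP cover of $\tilde G/\tilde N$, which is isoclinic to $G$. One small slip: in the paper $\K(G)$ denotes the \emph{set} of all commutators, not the subgroup $\langle [x,y]:[\bar x,\bar y]=1\rangle$ you wrote; however this does not affect your argument, since $\tilde N$ is central and hence $\tilde N\cap\K(\tilde G)$ already consists only of central commutators, and the compatibility of $(\tilde\alpha,\tilde\beta)$ indeed sends commutators to commutators.
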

\begin{proof}

Let $G_1$ be a CP cover of a group $Q_1$ with the covering projection 
$p_1 \colon G_1 \to Q_1$ and let $G_2$ be isoclinic to $G_1$ via the compatible
pair of isomorphisms $\alpha \colon G_2/Z(G_2) \to G_1/Z(G_1)$ and 
$\beta \colon [G_2, G_2] \to [G_1,G_1]$.  By Theorem \ref{t:b0cov}, we have
$\Bt_0(G_1) = 0$, and so $\Bt_0(G_2) = 0$ by Proposition \ref{p:coversareisocl}.
The commutator homomorphism $\kappa_i \colon G_i \curlywedge G_i \to [G_i, G_i]$
is therefore an isomorphism, and we implicitly identify the two groups. Consider
the group $N = \beta^{-1} \Bt_0(Q_1) \leq [G_2, G_2]$. Note that $N$ is central
in $G_2$. Furthermore, whenever $[x_1, x_2] \in N$ for some $x_1, x_2 \in G_2$
with $x_i Z(G_2) = y_i Z(G_1)$, we have 
$[y_1, y_2] = \beta ([x_1, x_2]) \in \Bt_0(Q_1)$, and so 
$[x_1, x_2] = \beta^{-1}([y_1, y_2]) = 1$ since the covering projection
$G_1 \to Q_1$ is commutativity preserving.  Now put $Q_2 = G_2/N$. By Lemma
\ref{l:being-a-CP-cover}, the group $G_2$ is a CP cover of $Q_2$ with kernel
$N \cong \Bt_0(Q_2)$. Finally, it is straightforward that the isomorphisms
$\alpha$ and $\beta$ naturally induce an isoclinism between the groups 
$Q_2 = G_2/\beta^{-1}(\Bt_0(Q_1))$ and $G_1/\Bt_0(Q_1) \cong Q_1$.
\end{proof}

Note that Lemma \ref{l:cpcovcenterstem} now implies that CP covers of the stem
of the base family form the stem of the covering family.

The following examples show that a given isoclinism family can be a covering
family for more than one base family. Moreover, a group in a covering family can
be a CP cover of non-isomorphic groups belonging to the same base family.

\begin{example} \label{e:64} 
Consider the isoclinism family that contains groups of smallest possible order
having non-trivial Bogomolov multipliers \cite{Chu09}. This is the family
$\Phi_{16}$ of \cite{Jam90}.  Its stem groups are of order $64$, and its
covering family $\tilde \Phi_{16}$ is precisely the isoclinism family
$\Phi_{36}$ of \cite{Jam90}, whose stem groups are of order $128$.
\end{example}

\begin{example}
Let $G$ be a Schur covering group of the abelian group $C_4^4$, generated by
$g_1, g_2, g_3, g_4$. Put $w = [g_1, g_2][g_3, g_4]$ and set 
$G_1 = G/\langle w \rangle$, $G_2 = G/\langle w^2 \rangle$. It is readily
verified that neither $w$ nor $w^2$ is a commutator in $G$. Since 
$\Bt_0(G) = 0$, if follows that $G$ is a CP cover of both $G_1$ and of $G_2$.
Applying Lemma \ref{l:B0exact} gives $\Bt_0(G_1) \cong C_4$ and 
$\Bt_0(G_2) \cong C_2$, so $G_1$ and $G_2$ do not belong to the same isoclinism
family.
\end{example}

\begin{example}
Let $Q$ be a stem group in the family $\Phi_{30}$ of \cite{Jam90} and let $G$ be
a CP cover of $Q$. It is shown in \cite{Jez14} that
$\Bt_0(Q) = \langle w_1 \rangle \times \langle w_2 \rangle \cong C_2 \times C_2$
for some $w_1, w_2 \in G$. Set $G_1 = G/\langle  w_1 \rangle$ and 
$G_2 = G/\langle  w_2 \rangle$. The groups $G_1$ and $G_2$ are isoclinic and non-isomorphic 
groups of order $256$, and $G$ is a CP cover of both of them. Is can
be verified using the algorithm for computing CP covers that the groups $G_1,
G_2$ in fact have exactly two non-isomorphic CP covers in common.
\end{example}

It is well-known that Schur covering groups of a given group are all isoclinic,
see for example \cite[Satz V.23.6]{Hup67}. Neither Proposition
\ref{p:coversareisocl} nor Proposition \ref{p:covisoclin}, however, has a
counterpart in the theory of Schur covering groups, as already the following
simple example shows.

\begin{example}
Let $\Phi$ be the isoclinism family of all finite abelian groups. We plainly
have $\tilde \Phi = \Phi$. Let $p$ be an arbitrary prime.  The Schur cover of
$C_{p^2}$ is $C_{p^2}$, and the Schur cover of $C_p \times C_p$ is isomorphic to
the unitriangular group ${\rm UT}_3(p)$. The two covers are not isoclinic. Note
also that the group $C_p \times C_p$ is not a Schur covering group of any group.
\end{example}

%%%%%%%%%%%%%%%%%%%%%%%%%%%%%%%%%%%%%%%%%%%%%%%%%%%%%%%%%%%%%%%%%%%%%%%
\section{Minimal CP extensions}
\label{s:appminimal}
%%%%%%%%%%%%%%%%%%%%%%%%%%%%%%%%%%%%%%%%%%%%%%%%%%%%%%%%%%%%%%%%%%%%%%%

In this section, we focus on central CP extensions of a cyclic group of prime
order by some given group $Q$. We call such extensions {\em minimal} CP
extensions. By Corollary \ref{c:covering-seq}, every central CP extension is
built from a sequence of such minimal extensions. As in the classical theory of
central extensions, this corresponds to considering $\FF_p$-cohomology. We
thus set $\HHCP^2(Q) = \HHCP^2(Q, \FF_p)$, the action of $Q$ on $\FF_p$ being
trivial. Relying on Theorem \ref{t:cpcovers}, the heart of the matter here is
relating a given presentation of $Q$ with the object $\HHCP^2(Q)$. The following
result is obtained.

\begin{theorem} \label{t:h2cprank}
The group $\HHCP^2(Q)$ is elementary abelian of rank $\dd(Q) + \dd(\B_0(Q))$.
\end{theorem}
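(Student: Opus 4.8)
The plan is to combine the CP Universal Coefficient Theorem (Theorem~\ref{t:uct2}) with the classical fact that $\HH^2(Q,\FF_p)$ controls minimal central extensions, and then count ranks. First I would invoke Theorem~\ref{t:uct2} with $N = \FF_p$: since $\FF_p$ is an elementary abelian $p$-group, $\HHCP^2(Q,\FF_p)$ sits in the split short exact sequence
\begin{equation*}
\xymatrix{0\ar[r] & \Ext(Q^{\rm ab},\FF_p)\ar[r] & \HHCP^2(Q)\ar[r] & \Hom(\Bt_0(Q),\FF_p)\ar[r] & 0}.
\end{equation*}
Because each of the two outer groups is a vector space over $\FF_p$ (note $\Ext(Q^{\rm ab},\FF_p)$ is a quotient/subobject of a product of copies of $\FF_p$, and $\Hom(\Bt_0(Q),\FF_p)$ is likewise an $\FF_p$-vector space), so is $\HHCP^2(Q)$; hence it is elementary abelian, and its rank is the sum of the two ranks.

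Next I would compute the two ranks separately. For the second factor, $\Hom(\Bt_0(Q),\FF_p)$ has $\FF_p$-dimension equal to $\dd(\Bt_0(Q)/p\,\Bt_0(Q)) = \dd(\B_0(Q))$ when we interpret $\dd$ of an abelian group as the minimal number of generators and remember that $\Bt_0(Q)$ is finite (here one should perhaps restrict to finite $Q$, or at least to $Q$ with $\B_0(Q)$ finite, which is the standing assumption in this section since we rely on Theorem~\ref{t:cpcovers}); indeed $\Hom(A,\FF_p)\cong \Hom(A/pA,\FF_p)$ has the same dimension as $A/pA$. For the first factor, $\Ext(Q^{\rm ab},\FF_p)$: writing $Q^{\rm ab}$ as a direct sum of cyclic groups, $\Ext(\mathbb Z,\FF_p)=0$ and $\Ext(\mathbb Z/n,\FF_p)\cong \FF_p/n\FF_p$, which is $\FF_p$ if $p\mid n$ and $0$ otherwise. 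Thus $\dim_{\FF_p}\Ext(Q^{\rm ab},\FF_p)$ equals the number of cyclic summands of $Q^{\rm ab}$ of order divisible by $p$, which is exactly $\dd((Q^{\rm ab})_p) = \dd(Q)$ — using that the minimal number of generators of $Q$ equals that of its abelianization, which equals that of $Q^{\rm ab}/\Frat$, and the $p$-part is governed by $Q^{\rm ab}\otimes\FF_p$. Adding, $\rk \HHCP^2(Q) = \dd(Q) + \dd(\B_0(Q))$.

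Alternatively — and this is probably the cleaner route, giving a more conceptual handle — I would argue directly via CP covers. By Theorem~\ref{t:cpcovers} a CP cover $G$ of $Q$ has $G^{\rm ab}$ and $\B_0(G)=0$, and minimal central CP extensions of $Q$ correspond (via inflation/deflation along $G\to Q$, using Corollary~\ref{c:covering-seq}) to index-$p$ subgroups of the relevant kernel; concretely, pulling a presentation $Q=F/R$ through to $H=F/\langle\K(F)\cap R\rangle$, one identifies $\HHCP^2(Q)$ with $\Hom(R/(\langle\K(F)\cap R\rangle[F,R]R^p),\FF_p)$, and $R/\langle\K(F)\cap R\rangle[F,R]$ is exactly $A = R/\langle \K(F)\cap R\rangle$ modulo $[F,R]$, whose torsion part is $\Bt_0(Q)$ and whose free part has rank $\dd(Q)$ (the same rank bookkeeping as in the Schur-multiplier case, e.g. \cite{Hup67}). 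Reducing mod $p$ then yields the rank $\dd(Q)+\dd(\B_0(Q))$.

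\textbf{Main obstacle.} The genuine content is not the rank count — that is bookkeeping with finitely generated abelian groups — but making sure the identification of $\HHCP^2(Q,\FF_p)$ with the stated $\Hom$-group is correct, i.e.\ that the CP condition cuts out precisely the subgroup $\langle\K(F)\cap R\rangle[F,R]$ inside $R$ before reducing mod $p$. This is where Theorem~\ref{t:cpcovers} and the Hopf-type formula $\Bt_0(Q)=(F'\cap R)/\langle\K(F)\cap R\rangle$ do the real work; the delicate point is that the free (non-torsion) part of $A/[F,R]$ contributes exactly $\dd(Q)$ and not something larger, which comes from the fact that $H^{\rm ab}\to Q^{\rm ab}$ is an isomorphism (Lemma~\ref{l:b0cp}) so the "new" generators mod $p$ split cleanly into the $\dd(Q)$ coming from $Q^{\rm ab}\otimes\FF_p$ and the $\dd(\B_0(Q))$ coming from $\Bt_0(Q)\otimes\FF_p$, with no interaction. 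Once that splitting is justified — which I would do by appealing to the split sequence of Theorem~\ref{t:uct2} rather than fighting the presentation directly — the proof is complete.
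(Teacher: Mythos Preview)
Your primary route---reading the rank directly off the split sequence of Theorem~\ref{t:uct2} with $N=\FF_p$---is correct and is genuinely different from the paper's argument. The paper does not simply decompose $\HHCP^2(Q)$ as $\Ext(Q^{\rm ab},\FF_p)\oplus\Hom(\Bt_0(Q),\FF_p)$; instead it passes to the canonical central CP extension $H=F/\langle\K(F)\cap R\rangle$, shows $\HHCP^2(H)=0$ (here Theorem~\ref{t:uct2} is used, but only to see that $\Ext(H^{\rm ab},\FF_p)=0$ since $H^{\rm ab}$ is free), then proves that $\HHCP^2(Q)=\ker\bigl(\inf_Q^H\colon\HH^2(Q,\FF_p)\to\HH^2(H,\FF_p)\bigr)$, and finally invokes the five-term inflation--restriction sequence to obtain $\HHCP^2(Q)\cong\Hom(A,\FF_p)$ with $A=R/\langle\K(F)\cap R\rangle$. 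Your ``alternative route'' is essentially this. The trade-off: your UCT argument is shorter and needs nothing beyond elementary $\Ext$/$\Hom$ computations, while the paper's detour through $H$ yields the intermediate identification $\HHCP^2(Q)\cong\Hom(A,\FF_p)$, which is exactly what is exploited in Corollary~\ref{c:boundingrankwithpresentation} to bound $\dd(\Bt_0(Q))$ by presentation data---that corollary does not fall out of the bare UCT count. One shared caveat: the identification $\dim_{\FF_p}\Ext(Q^{\rm ab},\FF_p)=\dd(Q)$ (and likewise for $\Bt_0(Q)$) uses the Burnside basis theorem, so both arguments tacitly assume $Q$ is a finite $p$-group; you flagged this when invoking ``the minimal number of generators of $Q$ equals that of its abelianization'', and it is worth stating explicitly.
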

\begin{proof}

Let $Q = F/R$ be a presentation of $Q$. Consider first the canonical central CP
extension $H = F/ \langle \K(F) \cap R \rangle$ of $Q$. The kernel of this
extension is the group $A = R/ \langle \K(F) \cap R \rangle$. 

We first claim that $\HHCP^2(H) = 0$. By Lemma \ref{l:B0exact}, we have $\B_0(H)
= 0$, and it then follows from Theorem \ref{t:uct2} that $\HHCP^2(H) =
\Ext(H^{\rm ab}, \FF_p) = 0$.

Next we show that the minimal CP extensions are precisely the kernel of the
inflation map from $Q$ to $H$:
\[
\textstyle \HHCP^2(Q) = \ker \left(\inf_Q^H \colon \HH^2(Q) \to \HH^2(H)  \right).
\]
Indeed, it follows from the above claim that $\HHCP^2(Q) \leq \ker \inf_Q^H$.
Conversely, let $\omega \in \ker \inf_Q^H$. Hence there is a function
$\phi \colon H \to \FF_p$ such that $\inf_Q^H(\omega)(x_1, x_2) = \phi(x_1) + \phi(x_2) - \phi(x_1 x_2)$. Pick any commuting pair $u,v \in Q$. Then there exists a commuting lift $\tilde u, \tilde v \in H$ of these elements. Therefore
$\omega(u, v) = \inf_Q^H(\omega)(\tilde u,\tilde v) = \inf_Q^H(\omega)(\tilde v,\tilde u)
= \omega(v, u)$, and so $\omega \in \HHCP^2(Q)$.

Let us now restrict to choosing the presentation $Q = F/R$ to be minimal in the
sense that $\dd(Q) = \dd(F)$. In this case, we invoke the inflation-restriction
cohomological exact sequence for the surjection $H \to Q$ with kernel $A$.
Together with the above, it immediately follows that $\HHCP^2(Q) \cong \Hom(A,
\FF_p)$. Finally, we have by Theorem \ref{t:cpcovers} that the torsion $T(A)
\cong \B_0(Q)$ in $A$ has a free complement of rank $\dd(F) = \dd(Q)$. The proof
is complete.
\end{proof}

We expose a corollary of the above proof.

\begin{corollary} \label{c:boundingrankwithpresentation}

Let $Q = F/R$ be a presentation with $\dd(Q) = \dd(F)$.  Let $\rr(F,R)$ be the
minimal number of relators in $R$ that generate $R$ as a normal subgroup of $F$,
and let $\rr_{\K}(F,R)$ be the number of relators among these that belong to
$\K(F)$. Then $\dd(\B_0(Q)) \leq \rr(F,R) - \rr_{\K}(F,R) - \dd(Q)$.

\end{corollary}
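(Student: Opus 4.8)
The idea is to read off the bound directly from the structure of the group $A = R/\langle \K(F) \cap R \rangle$ that appeared in the proof of Theorem \ref{t:h2cprank}, exploiting the hypothesis $\dd(Q) = \dd(F)$ to control the number of generators needed for $A$. First I would fix a set of $\rr(F,R)$ relators $r_1, \dots, r_m \in R$ that generate $R$ as a normal subgroup of $F$, ordered so that exactly the last $\rr_{\K}(F,R)$ of them lie in $\K(F)$. Since $R$ is generated by the $F$-conjugates of the $r_i$ and $A = R/\langle \K(F) \cap R \rangle$ is \emph{central} in $H = F/\langle \K(F) \cap R \rangle$ (this centrality is part of Theorem \ref{t:cpcovers}(1)), the conjugation action of $F$ on $A$ is trivial, so the images $\bar r_1, \dots, \bar r_m$ already generate $A$ as an abelian group. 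Moreover the last $\rr_{\K}(F,R)$ of these images are trivial in $A$: each such $r_i$ lies in $\K(F) \cap R$, hence maps to $0$. Therefore $A$ is generated by at most $\rr(F,R) - \rr_{\K}(F,R)$ elements, i.e. $\dd(A) \le \rr(F,R) - \rr_{\K}(F,R)$.

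Next I would combine this with the decomposition of $A$ obtained in Theorem \ref{t:cpcovers}(1): $A$ is a finitely generated abelian group whose torsion subgroup is $T(A) \cong \Bt_0(Q)$, and by the argument at the end of the proof of Theorem \ref{t:h2cprank} a complement $C$ to $T(A)$ in $A$ is free abelian of rank $\dd(F) = \dd(Q)$. Hence $A \cong \Bt_0(Q) \oplus \mathbb{Z}^{\dd(Q)}$. For a finitely generated abelian group of this shape one has $\dd(A) = \dd(\Bt_0(Q)) + \dd(Q)$: indeed reducing mod $p$ for a prime $p$ dividing $\exp \Bt_0(Q)$, or simply invoking the structure theorem, the minimal number of generators is additive over the torsion and free parts here because the free rank contributes $\dd(Q)$ independent generators in every $\mathbb{F}_\ell$-reduction while the torsion part contributes $\dd(\Bt_0(Q))$ at the relevant primes. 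Putting the two facts together gives
\[
\dd(Q) + \dd(\Bt_0(Q)) = \dd(A) \le \rr(F,R) - \rr_{\K}(F,R),
\]
which rearranges to the claimed inequality $\dd(\Bt_0(Q)) \le \rr(F,R) - \rr_{\K}(F,R) - \dd(Q)$.

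The only mildly delicate point — and the one I would write out carefully — is the equality $\dd(A) = \dd(\Bt_0(Q)) + \dd(Q)$ for $A \cong \Bt_0(Q) \oplus \mathbb{Z}^{\dd(Q)}$. The inequality $\dd(A) \le \dd(\Bt_0(Q)) + \dd(Q)$ is trivial; for the reverse one notes that $A / pA \cong (\Bt_0(Q)/p\Bt_0(Q)) \oplus \mathbb{F}_p^{\dd(Q)}$ has $\mathbb{F}_p$-dimension $\dd_p(\Bt_0(Q)) + \dd(Q)$, and choosing $p$ so that $\dd_p(\Bt_0(Q)) = \dd(\Bt_0(Q))$ (a prime realizing the maximal rank among the primary components) gives $\dd(A) \ge \dim_{\mathbb{F}_p} A/pA = \dd(\Bt_0(Q)) + \dd(Q)$. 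Everything else is bookkeeping with the presentation, and the centrality of $A$ in $H$ — which is exactly what makes the normal generators of $R$ collapse to ordinary abelian generators of $A$ — is the structural input doing the real work.
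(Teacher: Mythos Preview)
Your proof is correct and follows exactly the route of the paper: bound $\dd(A)$ by $\rr(F,R) - \rr_{\K}(F,R)$ using centrality of $A$ in $H$ and the vanishing of the relators lying in $\K(F)$, then invoke the decomposition $A \cong \Bt_0(Q) \oplus \mathbb{Z}^{\dd(Q)}$ from Theorem~\ref{t:cpcovers} to extract the bound on $\dd(\Bt_0(Q))$. The paper's own proof is a two-line pointer back to Theorem~\ref{t:h2cprank}, and your write-up simply unpacks precisely the details that pointer is gesturing at.
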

\begin{proof}
Going back to the proof of Theorem \ref{t:h2cprank}, it is clear that $\rank A
\leq \rr(F,R) - \rr_{\K}(F,R)$. The claim follows immediately.
\end{proof}

The corollary may be applied to show that the Bogomolov multiplier of a group is
trivial. This works with classes of groups which may be given by a presentation
with many simple commutators among relators. As an example, the group of
unitriangular matrices $\UT_n(p)$ has a presentation in which all relators are
commutators (see \cite{Bis01}), whence immediately $\B_0(\UT_n(p)) = 0$. The
same holds for lower central quotients of $\UT_n(p)$. 
This was already proved in \cite{Mic13}, see also \cite{Jez14a}.
Another example is the
braid group $B_n$ with $n-1$ generators and $n-2$ braid relators that are not
commutators, thereby again $\B_0(B_n) = 0$.

%%%%%%%%%%%%%%%%%%%%%%%%%%%%%%%%%%%%%%%%%%%%%%%%%%%%%%%%%%%%%%%%%%%%%%%
\section{Bounds for $\B_0$}
\label{s:appbounds}
%%%%%%%%%%%%%%%%%%%%%%%%%%%%%%%%%%%%%%%%%%%%%%%%%%%%%%%%%%%%%%%%%%%%%%%

Using the theory of CP covers, we now show how one can produce bounds on the
number of isoclinism classes of central CP extensions in terms of the internal
structure  of the given group. Equivalently, we bound the size of the Bogomolov
multiplier. The first result is an adaptation of the argument from \cite{Jon74}.

\begin{proposition}
\label{p:sizeofbog}

Let $Q$ be a finite group and $S$ a normal subgroup such that $Q/S$ is cyclic.
Then $|\B_0(Q)|$ divides $|\B_0(S)| \cdot |S^{\rm ab}|$, and $\dd(\B_0(Q)) \leq 
\dd(\B_0(S)) + \dd(S^{\rm ab})$.

\end{proposition}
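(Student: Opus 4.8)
The plan is to mimic the classical Jones–Wiegold argument for Schur multipliers, using a CP cover of $Q$ in place of a Schur cover. Let $G$ be a CP cover of $Q$, so that by Theorem \ref{t:b0cov} we have $\Bt_0(G) = 0$, by Theorem \ref{t:cpcovers} there is a central subgroup $N \cong \Bt_0(Q)$ with $N \leq Z(G) \cap [G,G]$ and $N \cap \K(G) = 1$, and $G/N \cong Q$. Let $T \leq G$ be the preimage of $S$ under the projection $G \to Q$, so that $T \supseteq N$, $T/N \cong S$, and $G/T \cong Q/S$ is cyclic. First I would observe that since $N \cap \K(G) = 1$ and $N$ is central, we have $N \cap \K(T) \leq N \cap \K(G) = 1$, so $N$ is a central CP subgroup of $T$ (here using Proposition \ref{p:cp}). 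Moreover $N \leq [G,G] \cap T$, and I would like to arrange $N \leq [T,T]$; this is where the cyclicity of $G/T$ enters, exactly as in \cite{Jon74}: because $G/T$ is cyclic, a transfer/commutator computation gives $[G,G] \cap T \leq [T,T]\cdot(\text{something controllable})$, and in any case one gets that the part of $N$ not lying in $[T,T]$ is a homomorphic image of a quotient related to $T^{\rm ab}$.

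The key step is then to apply Lemma \ref{l:being-a-CP-cover} style reasoning, or more precisely Lemma \ref{l:b0cp}, to the central CP subgroup $N$ of $T$: this yields the exact sequence
\[
0 \longrightarrow \Bt_0(T) \longrightarrow \Bt_0(T/N) \longrightarrow N \cap [T,T] \longrightarrow 0,
\]
together with the Ganea-type surjection controlling the kernel. Since $T/N \cong S$, this gives a surjection $\Bt_0(S) \twoheadrightarrow N \cap [T,T]$, hence $|N \cap [T,T]|$ divides $|\Bt_0(S)|$ and $\dd(N \cap [T,T]) \leq \dd(\Bt_0(S))$. It remains to bound the cokernel $N/(N \cap [T,T])$. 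Since $N$ is central in $T$, this cokernel is a quotient of $N/(N \cap [T,T]) \hookrightarrow T/[T,T] = T^{\rm ab}$, so its order divides $|T^{\rm ab}|$ and its rank is at most $\dd(T^{\rm ab})$. Finally one relates $T^{\rm ab}$ back to $S^{\rm ab}$: since $N$ is central in $T$ and $T/N \cong S$, there is a surjection $T^{\rm ab} \twoheadrightarrow S^{\rm ab}$ whose kernel is the image of $N$, which is already accounted for; more carefully, $|N| = |N \cap [T,T]| \cdot |N/(N\cap[T,T])|$ and $N/(N \cap [T,T])$ embeds in $T^{\rm ab}$ whose order divides $|N/(N\cap[T,T])|\cdot|S^{\rm ab}|$, so one must untangle this to conclude $|N/(N \cap [T,T])|$ divides $|S^{\rm ab}|$ and $\dd(N/(N\cap[T,T])) \leq \dd(S^{\rm ab})$. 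Combining, $|\Bt_0(Q)| = |N|$ divides $|\Bt_0(S)| \cdot |S^{\rm ab}|$ and $\dd(\Bt_0(Q)) = \dd(N) \leq \dd(N \cap [T,T]) + \dd(N/(N\cap [T,T])) \leq \dd(\Bt_0(S)) + \dd(S^{\rm ab})$.

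The main obstacle I expect is the bookkeeping around $N \cap [T,T]$ versus $N$: showing cleanly that the "abelian part" $N/(N \cap [T,T])$ is bounded by $S^{\rm ab}$ rather than by $T^{\rm ab}$ requires using the cyclicity of $G/T$ in an essential way (a naive bound would only give $|T^{\rm ab}|$, which could be larger). This is precisely the point where \cite{Jon74} uses a transfer argument: for $G/T$ cyclic of order $m$, the transfer $G^{\rm ab} \to T^{\rm ab}$ composed with inclusion is multiplication by $m$ up to conjugation, which forces $[G,G] \cap T$ to nearly contain $[T,T]$, so that $N \leq [G,G] \cap T$ maps into a small quotient of $T^{\rm ab}$ that is controlled by $S^{\rm ab} = (T/N)^{\rm ab}$. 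I would carry out this transfer computation carefully; everything else (the exactness of the Ganea-type sequence, the behaviour of $\Bt_0$ under the central CP quotient) is supplied by Lemma \ref{l:b0cp} and Proposition \ref{p:cp}.
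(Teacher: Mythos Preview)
Your overall strategy matches the paper's: take a CP cover $G$ of $Q$ with kernel $N\cong\Bt_0(Q)$, let $T\le G$ be the preimage of $S$, and factor $|N|=|N\cap[T,T]|\cdot|N/(N\cap[T,T])|$. Your treatment of $N\cap[T,T]$ is correct; using Lemma~\ref{l:b0cp} to obtain a surjection $\Bt_0(S)\twoheadrightarrow N\cap[T,T]$ is equivalent to the paper's route via Lemma~\ref{l:isostemext} and Theorem~\ref{t:cpcovers}(3).

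The genuine gap is in bounding $N/(N\cap[T,T])$. You correctly flag that the naive embedding only gives $|T^{\rm ab}|$, not $|S^{\rm ab}|$, and that cyclicity of $G/T$ must be used. However, the fix you propose---a transfer argument---is not the right tool here (and your sketch of what transfer would yield is garbled: $[T,T]\le[G,G]\cap T$ always, so ``$[G,G]\cap T$ nearly contains $[T,T]$'' is not the issue). The transfer map is what the paper uses for the \emph{exponent} bound in Proposition~\ref{p:expofbog}, not here. What Jones actually does, and what the paper follows, is more direct: since $G/T$ is cyclic, write $G=\langle u,T\rangle$ and consider the map
\[
\theta\colon T\longrightarrow [G,G]/[T,T],\qquad x\longmapsto [u,x][T,T].
\]
Because $T\trianglelefteq G$ one has $[u,x]\in T$ for all $x\in T$, and a short computation shows $\theta$ is a homomorphism; it is surjective since $[G,G]$ is generated modulo $[T,T]$ by commutators $[u,x]$. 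As $N$ is central, $N[T,T]\le\ker\theta$, whence $|[G,G]/[T,T]|$ divides $|T/N[T,T]|=|S^{\rm ab}|$. Finally $N\le[G,G]$ gives an embedding $N/(N\cap[T,T])\hookrightarrow[G,G]/[T,T]$, so $|N/(N\cap[T,T])|$ divides $|S^{\rm ab}|$. The same chain handles the rank inequality. Replace your transfer paragraph with this commutator-map argument and the proof is complete.
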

\begin{proof}
Let $G$ be a CP cover of $Q$. Thus $G$ contains a subgroup $N \leq [G,G] \cap
Z(G)$ such that $G/N \cong Q$ and $N \cong \B_0(Q)$. Choose $X$ in $G$ such that
$X/N \cong S$. We may write $G = \langle u, X \rangle$ for some $u$. There is
thus an epimorphism $\theta \colon X \to [G,G]/[X,X]$ given by $\theta(x) =
[u,x][X,X]$. Therefore $|\B_0(Q)| = |N| = |N/(N \cap [X,X])| \cdot |N \cap
[X,X]|$. Now, since $NX' \leq \ker \theta$, it follows that $|N/(N \cap [X,X])|
\leq |[G,G]/[X,X]| \leq |X / N [X,X]| = |S^{\rm ab}|$. Observe that the CP covering
extension $G$ of $Q$ induces a central CP extension $X$ of $S$ with kernel $N$.
Whence by Lemma \ref{l:isostemext}, we have that $N \cap [X,X]$ is the kernel of the
associated stem extension. It now follows from Theorem \ref{t:cpcovers} that $|N \cap
[X,X]| \leq |\B_0(Q)|$. This completes the proof of the first claim. For the
second one, we similarly have $\dd(\B_0(Q)) = \dd(N) \leq \dd(N/(N \cap [X,X]))
+ \dd(N \cap [X,X])$. The result follows from $\dd(N / (N \cap [X,X])) \leq
\dd([G,G]/[X,X]) \leq \dd(S^{\rm ab})$.
\end{proof}

Next, we also provide a bound for the exponent. This is an analogy of
\cite{Jon73}.

\begin{proposition}
\label{p:expofbog}

Let $Q$ be a finite group and $S$ a subgroup. Then $\B_0(Q)^{|Q:S|}$ embeds into $\B_0(S)$.

\end{proposition}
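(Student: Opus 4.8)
The plan is to mimic the classical transfer argument that bounds the exponent of the Schur multiplier by the index of a subgroup, adapting it to CP covers. First I would fix a CP cover $G$ of $Q$, so that $G$ carries a central subgroup $N \cong \Bt_0(Q)$ with $N \leq Z(G) \cap [G,G]$ and $N \cap \K(G) = 1$, and write $d = |Q:S|$. Pulling back along the covering projection $p \colon G \to Q$, the preimage $X = p^{-1}(S)$ is a subgroup of $G$ of index $d$ containing $N$, and the restricted extension $\xymatrix{1 \ar[r] & N \ar[r] & X \ar[r] & S \ar[r] & 1}$ is a central CP extension of $S$ by Proposition \ref{p:cp}, since $N \cap \K(X) \leq N \cap \K(G) = 1$. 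By Lemma \ref{l:isostemext} this extension is isoclinic to a stem central CP extension of $S$ whose kernel is $N \cap [X,X]$, and then Theorem \ref{t:cpcovers}(3) gives a surjection $\Bt_0(S) \twoheadrightarrow N \cap [X,X]$. So it suffices to show that $N^{d} \leq [X,X]$: this will produce an embedding of $N^{d} \cong \Bt_0(Q)^{d}$ into $N \cap [X,X]$, hence into a quotient of $\Bt_0(S)$; combining with the surjection the other way one concludes $\Bt_0(Q)^{d}$ embeds into $\Bt_0(S)$.

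The key step is therefore the containment $N^{|Q:S|} \leq [X,X]$, and this is where the transfer map enters. Consider the transfer homomorphism $\mathrm{Ver} \colon G^{\rm ab} \to X^{\rm ab}$ (or directly $G \to X/[X,X]$). Because $N$ is central in $G$, the standard computation of the transfer on a central element shows that $\mathrm{Ver}(nN') = n^{d} [X,X]$ for $n \in N$: one chooses a right transversal $\{t_1,\dots,t_d\}$ of $X$ in $G$, and for a central $n$ each term $t_i n t_{\sigma(i)}^{-1}$ contributes simply $n$ modulo $[X,X]$ since the permutation induced on the transversal is trivial and $n$ commutes with everything. Thus $n^{d} \in [X,X]$ for every $n \in N$, which is exactly $N^{d} \leq [X,X]$ — and since $N \leq [G,G] \leq$ ... more precisely since $N \subseteq [G,G]$ already, $N^d \subseteq N \cap [X,X]$.

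Assembling the pieces: the inclusion $N^{d} \hookrightarrow N \cap [X,X]$ is injective (it is a restriction of the identity on $N$), $N \cong \Bt_0(Q)$ gives $N^{d} \cong \Bt_0(Q)^{d} = \Bt_0(Q)^{|Q:S|}$, and $N \cap [X,X]$ is a homomorphic image of $\Bt_0(S)$ by the CP-cover argument above. A subgroup of a quotient of $\Bt_0(S)$ need not itself embed into $\Bt_0(S)$ in general, but here $\Bt_0(S)$ is a finite abelian group, and $N \cap [X,X]$ being a quotient of it means every subgroup of $N \cap [X,X]$ — in particular $N^{d}$ — is isomorphic to a subgroup of $\Bt_0(S)$ (for finite abelian groups, subgroups of quotients are isomorphic to subquotients, which embed back by the structure theorem). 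This yields the desired embedding $\Bt_0(Q)^{|Q:S|} \hookrightarrow \Bt_0(S)$.

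The main obstacle I anticipate is the last bookkeeping step — passing from "$N^{d}$ sits inside a quotient of $\Bt_0(S)$" to "$N^{d}$ embeds into $\Bt_0(S)$" — which needs the finite abelian group fact and a little care; a cleaner route is to observe that $N \cap [X,X] \cong \Bt_0(Q)/U$ where $U = \ker(\Bt_0(S) \to N \cap [X,X])^\perp$... in any case the transfer computation $\mathrm{Ver}(n) = n^{d}$ for central $n$ is the true heart of the matter and is entirely routine, so the proof should go through smoothly once the CP-cover machinery of Theorem \ref{t:cpcovers} and Lemma \ref{l:isostemext} is invoked to identify $N \cap [X,X]$ as a homomorphic image of $\Bt_0(S)$.
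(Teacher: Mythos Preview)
Your approach is the same as the paper's: pass to a CP cover $G$ of $Q$, pull $S$ back to $X$, use the transfer $G \to X/[X,X]$ to force $N^{|Q:S|} \leq N \cap [X,X]$, and then identify $N \cap [X,X]$ with a piece of $\Bt_0(S)$ via the stem-extension machinery. One small logical slip: the transfer formula $\mathrm{Ver}(n) = n^{d}[X,X]$ for central $n$ does \emph{not} by itself give $n^{d} \in [X,X]$; you need the additional fact that $N \leq [G,G]$ forces $N \leq \ker \mathrm{Ver}$ (the transfer factors through $G^{\rm ab}$), and only then does $n^{d}[X,X] = [X,X]$ follow. You do invoke $N \leq [G,G]$, but in the wrong place --- to deduce $N^{d} \subseteq N$, which is trivial --- rather than to kill the transfer. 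Your caution about the final step (subgroup of a quotient of a finite abelian group embeds back) is well placed; the paper simply asserts ``$N \cap [X,X]$ embeds into $\Bt_0(S)$'' and leaves the same finite-abelian fact implicit.
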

\begin{proof}
Let $G$ be a CP cover of $Q$. Again, $G$ contains a subgroup $N \leq [G,G] \cap
Z(G)$ such that $G/N \cong Q$ and $N \cong \B_0(Q)$. Choose $X$ in $G$ such that
$X/N \cong S$. Consider the transfer map $\theta \colon G \to X/[X,X]$. Since
$N$ is central in $G$, we have $\theta(n) = n^{|Q:S|} [X,X]$ for all $n \in N$.
But as $N \leq [G,G]$, we must also have that $N \leq \ker \theta$. Therefore
$N^{|Q:S|} \leq N \cap [X,X]$. As in the proof of the previous proposition, we
have that $N \cap [X,X]$ embeds into $\B_0(S)$. This completes the proof.
\end{proof}

These results may be applied in various ways, depending on the structural
properties of the group in question, to provide some absolute bounds on the
order, rank or exponent of the Bogomolov multiplier. As an example, consider a
$p$-group $Q$ that has a maximal subgroup $M$ with $\B_0(M) = 0$. The above
propositions imply that for such groups, $\B_0(Q)$ is elementary abelian of rank
at most $\dd(M)$. Such groups include $\B_0$-minimal groups, the building blocks
of groups with non-trivial Bogomolov multipliers, and were inspected to some
extent in \cite{Jez14}. It was shown that every $\B_0$-minimal group can be
generated by at most $4$ elements. Schreier's index formula $\dd(M) - 1 \leq
|Q:M| (\dd(Q) - 1)$, cf. \cite[6.1.8]{Rob1}, then gives an absolute upper bound on the number of
generators of a maximal subgroup $M$. Whence the following corollary.

\begin{corollary}
The Bogomolov multiplier of a $\B_0$-minimal $p$-group is an elementary abelian
group of rank at most $3p + 1$.
\end{corollary}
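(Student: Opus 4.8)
The plan is to combine the two preceding propositions with the known bound on the number of generators of a $\B_0$-minimal $p$-group. First I would recall that by the result cited from \cite{Jez14}, any $\B_0$-minimal $p$-group $Q$ satisfies $\dd(Q) \leq 4$. Choosing a maximal subgroup $M$ of $Q$, we have $|Q:M| = p$, and the defining property of $\B_0$-minimality gives $\B_0(M) = 0$ (this is exactly the situation flagged in the paragraph preceding the corollary). Applying Proposition \ref{p:sizeofbog} with $S = M$ then shows that $\dd(\B_0(Q)) \leq \dd(\B_0(M)) + \dd(M^{\rm ab}) = \dd(M^{\rm ab}) \leq \dd(M)$, so the rank is controlled by $\dd(M)$. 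Applying Proposition \ref{p:expofbog} with the same $S = M$ shows that $\B_0(Q)^{|Q:M|} = \B_0(Q)^p$ embeds into $\B_0(M) = 0$, whence $\exp \B_0(Q)$ divides $p$; being a finite abelian $p$-group of exponent dividing $p$, $\B_0(Q)$ is elementary abelian.

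Next I would bound $\dd(M)$. By Schreier's index formula \cite[6.1.8]{Rob1}, $\dd(M) - 1 \leq |Q:M|(\dd(Q) - 1) = p(\dd(Q) - 1)$, and since $\dd(Q) \leq 4$ this yields $\dd(M) \leq 3p + 1$. Combining with the previous paragraph, $\B_0(Q)$ is elementary abelian of rank at most $3p + 1$, which is the claim.

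The one genuine subtlety worth spelling out is why a $\B_0$-minimal $p$-group has a maximal subgroup $M$ with $\B_0(M) = 0$: this is essentially the definition of $\B_0$-minimality (a group is $\B_0$-minimal if $\B_0(Q) \neq 0$ but $\B_0$ vanishes on all proper subgroups and proper quotients), combined with the fact that in a $p$-group every maximal subgroup is proper of index $p$; I would cite \cite{Jez14} for this and for the bound $\dd(Q) \leq 4$. Everything else is a direct substitution into Propositions \ref{p:sizeofbog} and \ref{p:expofbog} and the Schreier formula, so I do not anticipate a real obstacle — the main point is simply to observe that these two bounds, one on rank and one on exponent, together pin down the structure of $\B_0(Q)$ completely once $\dd(Q)$ is bounded.
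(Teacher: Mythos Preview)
Your proposal is correct and follows exactly the approach outlined in the paragraph preceding the corollary in the paper: use $\B_0(M)=0$ for a maximal subgroup $M$, apply Propositions \ref{p:sizeofbog} and \ref{p:expofbog} to get that $\B_0(Q)$ is elementary abelian of rank at most $\dd(M)$, then combine the bound $\dd(Q)\le 4$ from \cite{Jez14} with Schreier's formula to obtain $\dd(M)\le 3p+1$. The only implicit point you might make explicit is that a maximal subgroup of a $p$-group is automatically normal (so Proposition \ref{p:sizeofbog} applies), but this is standard.
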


Another direct application is to consider any abelian subgroup $A$ of 
a given group $Q$. Since $\B_0(A) = 0$, we have the following.

\begin{corollary}
Let $Q$ be a finite group and $A$ an abelian subgroup. Then $\exp \B_0(Q)$
divides $|Q:A|$.
\end{corollary}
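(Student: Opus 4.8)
The statement to prove is the final corollary: if $Q$ is a finite group and $A \leq Q$ an abelian subgroup, then $\exp \B_0(Q)$ divides $|Q:A|$. The natural route is to invoke Proposition \ref{p:expofbog} directly with $S = A$. That proposition asserts that $\B_0(Q)^{|Q:A|}$ embeds into $\B_0(A)$. Since $A$ is abelian, $\B_0(A) = 0$ (abelian groups have trivial Bogomolov multiplier — indeed $\M_0(A) = \M(A)$ because every pair in $A$ commutes, so $\B_0(A) = \M(A)/\M_0(A) = 0$). Hence $\B_0(Q)^{|Q:A|} = 0$, which is exactly the assertion that $\exp \B_0(Q)$ divides $|Q:A|$. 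So the whole proof is essentially one line once Proposition \ref{p:expofbog} is in hand.

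\textbf{Key steps, in order.} First I would state that $\B_0(A) = 0$ since $A$ is abelian, citing the definition of $\B_0$ as $\M/\M_0$ and noting $\M_0(A) = \M(A)$ (every generator $x \wedge y$ with $x,y \in A$ lies in $\M_0(A)$ once $[x,y]=1$; but this is automatic in an abelian group, and such generators together with the commutator relations force $\M_0 = \M$). Second, I would apply Proposition \ref{p:expofbog} with $S = A$ to conclude $\B_0(Q)^{|Q:A|}$ embeds into $\B_0(A) = 0$, hence is trivial. Third, I would observe that $\B_0(Q)^{n} = 0$ is precisely the statement $\exp \B_0(Q) \mid n$, completing the proof.

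\textbf{Main obstacle.} There is essentially no obstacle: the corollary is an immediate specialization of Proposition \ref{p:expofbog}. The only point requiring even a moment's thought is the triviality of $\B_0$ for abelian groups, and that is standard and follows straight from the definition. If one wanted to be fully self-contained one could alternatively argue geometrically: a CP cover $G$ of $Q$ contains $N \cong \B_0(Q)$ with $N \leq Z(G) \cap [G,G]$ and $N \cap \K(G) = 1$; taking the preimage $X$ of $A$ in $G$, the transfer $G \to X/[X,X]$ kills $N$ (as $N \leq [G,G]$) yet sends $n \in N$ to $n^{|Q:A|}[X,X]$, forcing $N^{|Q:A|} \leq N \cap [X,X]$, and since $X$ is a central extension of the abelian group $A$ by $N$ with $N \cap \K(X) = 1$, one has $N \cap [X,X] \hookrightarrow \B_0(A) = 0$. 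But invoking Proposition \ref{p:expofbog} as a black box is cleaner, and that is the approach I would present.
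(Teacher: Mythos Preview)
Your proposal is correct and matches the paper's own argument exactly: the corollary is stated immediately after Proposition~\ref{p:expofbog} with the remark that $\B_0(A)=0$ for abelian $A$, and the divisibility follows at once. There is nothing to add.
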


%%%%%%%%%%%%%%%%%%%%%%%%%%%%%%%%%%%%%%%%%%%%%%%%%%%%%%%%%%%%%%%%%%%%%%%
\section{Commuting probability}
\label{s:comm-prob}

A probabilistic approach to the study of Bogomolov multipliers has been
undertaken in \cite{Jez15}, where the impact of the commuting probability on the
Bogomolov multiplier was explored. It turns out that commutativity preserving
extensions provide a natural setting for both commuting probability and
Bogomolov multipliers.  This is based on the following observation.

\begin{proposition}
\label{p:comm-prob}
An extension $ \xymatrix{N\ar[r] & G\ar[r]^{\pi} & Q} $ is a central CP
extension if and only if $\cp(G) = \cp(Q)$.
\end{proposition}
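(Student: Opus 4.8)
The plan is to compare the two extensions in terms of sizes of centralizers, using the standard formula $\cp(G) = \sum_{x} |C_G(x)| / |G|^2$ together with the fact that $\pi$ induces a bijection between conjugacy classes of $G$ and those of $Q$ whenever the kernel $N$ is central. First I would fix notation: write $N$ for the (central) kernel, $|N| = n$, and recall that $\pi$ surjects with central kernel, so conjugation in $G$ descends to $Q$. For $g \in G$ with $\pi(g) = x$, one always has $\pi(C_G(g)) \subseteq C_Q(x)$, and $N \subseteq C_G(g)$, hence $|C_G(g)| \le n\,|C_Q(x)|$. The key dichotomy is: equality $|C_G(g)| = n\,|C_Q(x)|$ holds precisely when every element of $C_Q(x)$ lifts to an element of $C_G(g)$, i.e. when the commuting pair $(x, y)$ has a commuting lift $(g, h)$ for every $y \in C_Q(x)$.

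Next I would compute $\cp(G)$ by summing $|C_G(g)|$ over a transversal of conjugacy classes. Since $N$ is central, the conjugacy classes of $G$ are indexed by pairs $(\bar{x}, \text{coset data})$; more cleanly, group the sum over $g \in G$ as $\sum_{g \in G} |C_G(g)| = \sum_{x \in Q} \sum_{g \in \pi^{-1}(x)} |C_G(g)|$, and use $|C_G(g)| \le n |C_Q(x)|$ with equality iff the CP condition holds at $x$. This gives
\begin{equation*}
\cp(G) = \frac{1}{|G|^2} \sum_{g \in G} |C_G(g)| \le \frac{1}{n^2 |Q|^2} \sum_{x \in Q} n \cdot n\,|C_Q(x)| = \frac{1}{|Q|^2}\sum_{x \in Q} |C_Q(x)| = \cp(Q),
\end{equation*}
with equality if and only if $|C_G(g)| = n\,|C_Q(x)|$ for all $g$, i.e. if and only if every commuting pair in $Q$ has a commuting lift in $G$ — which is exactly the statement that $e$ is a CP extension.

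For the forward direction this already does it: if $e$ is a central CP extension, each inequality is an equality, so $\cp(G) = \cp(Q)$. For the converse, if $\cp(G) = \cp(Q)$ then every term must meet its bound, so $|C_G(g)| = n|C_Q(\pi(g))|$ for all $g$; fixing a commuting pair $(x,y)$ in $Q$ and a lift $g$ of $x$, the equality forces $C_Q(x) \subseteq \pi(C_G(g))$, so $y$ lifts to some $h \in C_G(g)$, giving a commuting lift. Hence $e$ is CP. The main obstacle I anticipate is bookkeeping the passage between the sum over group elements and the sum over conjugacy classes, and making sure the centralizer inequality $|C_G(g)| \le n|C_Q(x)|$ together with its equality case is stated sharply — in particular that $\pi^{-1}(C_Q(x)) \cap C_G(g)$ is a union of $N$-cosets so that the index computation is exact; once that is pinned down, the rest is the displayed chain of (in)equalities. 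One should also note the extension need not be assumed central a priori for the "only if" direction, but centrality is used in the "if" direction to guarantee $\pi$ matches up conjugacy classes and to get $N \le C_G(g)$; since Proposition statements here concern central CP extensions this is harmless, though I would remark that $\cp(G) = \cp(Q)$ forces $N$ to act trivially enough to recover centrality on $[G,G]$, matching Proposition \ref{p:cp}.
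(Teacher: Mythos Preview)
Your approach is essentially the paper's: the identity $\sum_{g \in G} |C_G(g)| = |\{(g,h) \in G^2 : [g,h]=1\}|$ means your centralizer sum is exactly the paper's count of commuting pairs, and your fibrewise inequality $|C_G(g)| \le n\,|C_Q(\pi(g))|$ is the paper's observation that the commuting pairs in $G$ sit inside the preimage under $\pi \times \pi$ of the commuting pairs in $Q$, each fibre having size $n^2$.

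There is one genuine gap. The proposition as stated does not assume the extension is central; centrality is part of what must be \emph{concluded} from $\cp(G) = \cp(Q)$. You assume $N \subseteq C_G(g)$ from the outset, which presupposes centrality, and your closing remarks about recovering ``centrality on $[G,G]$'' do not resolve this (and are not what is needed: one must show $N \le Z(G)$, not merely $N \cap [G,G]$ central). The fix is immediate within your own framework: for arbitrary normal $N$ one has
\[
|C_G(g)| = |N \cap C_G(g)| \cdot |\pi(C_G(g))| \le |N| \cdot |C_Q(\pi(g))|,
\]
so your displayed chain still gives $\cp(G) \le \cp(Q)$, and equality now forces both $N \cap C_G(g) = N$ for every $g$ (i.e.\ $N$ is central) and $\pi(C_G(g)) = C_Q(\pi(g))$ for every $g$ (the CP condition). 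This is exactly how the paper handles it, phrased there as the containment of commuting-pair sets being an equality if and only if the extension is both central and CP.
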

\begin{proof}
Observe the homomorphism $\pi^2 \colon G \times G \to Q \times Q$. Note that
commuting pairs in $G$ map to commuting pairs in $Q$,  hence
\begin{equation} \label{eq:preimg-of-comm} 
(\pi^2)^{-1}(\{ (x,y) \in Q \times Q \mid [x,y] = 1 \}) \supseteq
\{ (x,y) \in G \times G \mid [x,y] = 1 \}.
\end{equation} 
The containment \eqref{eq:preimg-of-comm} is an equality if and only if the
extension is CP and $N$ is a central subgroup of $G$. On the other hand, notice
that the fibres of $\pi^2$ are of order $|N|^2$, therefore 
$\cp(G) = |\{ (x,y) \in G \times G \mid [x,y] = 1 \}|/|G|^2 \leq
|N|^2|\{ (x,y) \in Q \times Q \mid [x,y] = 1 \}|/|G|^2 = \cp(Q)$ 
with equality precisely when \eqref{eq:preimg-of-comm} is an equality. This
completes the proof.
\end{proof}

\begin{remark}
Consider a central extension
$ \xymatrix{\langle z \rangle\ar[r] & G\ar[r]^{\pi} & Q} $.
It follows from the above proof that this extension
is a CP extension if and only if all conjugacy classes of $Q$ lift with respect
to $\pi$ to exactly $p$ different conjugacy classes in $G$.
\end{remark}

The study of central CP extensions is thus equivalent to the study of extensions
which preserve commuting probability. This may be exploited in providing a
connection between the Bogomolov multiplier and commuting probability based on
CP extensions. We give a simple example illustrating this.

\begin{corollary}
For every number $p$ in the range of the commuting probability function, there
exists a group $G$ with $\cp(G) = p$ and  $\B_0(G) = 0$.
\end{corollary}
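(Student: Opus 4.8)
The plan is to leverage Proposition~\ref{p:comm-prob}, which says that central CP extensions are exactly the commuting-probability-preserving extensions, together with the fact (from \cite{Jez15}, or implicit in the theory of CP covers above) that every finite group sits inside a central CP extension with trivial Bogomolov multiplier. Concretely, start with any group $Q$ realizing the given value $p = \cp(Q)$; such a $Q$ exists by hypothesis, since $p$ lies in the range of the commuting probability function. The goal is then to find a group $G$ with $\cp(G) = \cp(Q) = p$ and $\B_0(G) = 0$.

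First I would take $G$ to be a CP cover of $Q$, which exists by Theorem~\ref{t:cpcovers}. By definition a CP cover is in particular a central CP extension of $Q$, so Proposition~\ref{p:comm-prob} immediately gives $\cp(G) = \cp(Q) = p$. It remains to check that $\B_0(G) = 0$, and this is precisely the content of Theorem~\ref{t:b0cov}: the Bogomolov multiplier of a CP cover is trivial. That finishes the argument.

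The only mild subtlety is making sure the input group $Q$ is finite, so that Theorems~\ref{t:cpcovers} and~\ref{t:b0cov} apply; but the range of the commuting probability function is customarily taken over finite groups, so this is automatic. Thus the proof is essentially a two-line consequence of the machinery already assembled: Proposition~\ref{p:comm-prob} to preserve $\cp$, and Theorem~\ref{t:b0cov} to kill $\B_0$. I do not anticipate any real obstacle; the point of the corollary is illustrative, showing that the constraint $\B_0(G) = 0$ places no restriction whatsoever on the attainable values of $\cp$.

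\begin{proof}
Let $p$ be in the range of the commuting probability function, and choose a finite group $Q$ with $\cp(Q) = p$. Let $G$ be a CP cover of $Q$, which exists by Theorem~\ref{t:cpcovers}. Being a central CP extension of $Q$, the group $G$ satisfies $\cp(G) = \cp(Q) = p$ by Proposition~\ref{p:comm-prob}. Finally, $\B_0(G) = 0$ by Theorem~\ref{t:b0cov}.
\end{proof}
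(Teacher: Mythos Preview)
Your proof is correct and matches the paper's own proof essentially line for line: take a CP cover $G$ of a group $Q$ realizing $\cp(Q)=p$, invoke Proposition~\ref{p:comm-prob} for $\cp(G)=p$, and Theorem~\ref{t:b0cov} for $\B_0(G)=0$. Your additional remark about needing $Q$ finite (so that Theorem~\ref{t:cpcovers} applies) is a reasonable point to make explicit.
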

\begin{proof}
Let $Q$ be an arbitrary group with $\cp(Q) = p$, and let $G$ be a CP-cover of
$Q$. Then $\cp(G) = p$ by Proposition \ref{p:comm-prob} and $\B_0(G) = 0$ by
Theorem \ref{t:b0cov}.
\end{proof}

Another way to look at this relation is on the level of isoclinism families.
As a direct consequence of Corollary \ref{c:covering-seq}, we have that
for every isoclinism family $\Phi$  and every subgroup $N$ of $\B_0(\Phi)$,
there is a family $\Phi'$ with  $\cp(\Phi') = \cp(\Phi)$ and $\B_0(\Phi') = N$.

\begin{example}
Observe the isoclinism family $\Phi_{16}$ as given in Example \ref{e:64}. We
have $\cp(\Phi_{16}) = \cp(\Phi_{36}) = 1/4$, while $\B_0(\Phi_{16}) \cong C_2$
and $\B_0(\Phi_{36}) = 0$.
\end{example}

This connection also sheds new light on the results of \cite{Jez15}. There, we
have observed the structure of the Bogomolov multiplier while fixing a large
commuting probability. First of all, those results can be applied in the context
of CP extensions.

\begin{corollary} \label{c:14}
Let $Q$ be a finite group with $\cp(Q) > 1/4$. Then every central CP extension
of $Q$ is isoclinic to an extension with a trivial kernel.
\end{corollary}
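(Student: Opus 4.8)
The plan is to combine the main result of \cite{Jez15} — that $\cp(Q) > 1/4$ forces $\B_0(Q) = 0$ — with the structure theory of CP extensions developed above, in particular Lemma \ref{l:isostemext}. First I would recall that by Lemma \ref{l:isostemext}, every central CP extension $e$ of $Q$ is isoclinic to a stem central CP extension $\bar e$, whose kernel is isomorphic to $\im \tilde\theta(e) \le \B_0(Q)$. Since $\cp(Q) > 1/4$ gives $\B_0(Q) = 0$ by \cite{Jez15}, the kernel of $\bar e$ is trivial, and $\bar e$ is the extension $\xymatrix{1\ar[r] & 1 \ar[r] & Q \ar[r] & Q \ar[r] & 1}$ up to equivalence. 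Thus $e$ is isoclinic to an extension with trivial kernel, as claimed.

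The only point requiring a little care is the phrase ``isoclinic to an extension with a trivial kernel'': one should note that a central extension with trivial kernel is simply (equivalent to) the identity extension of $Q$, so the statement really says that every central CP extension of such a $Q$ is isoclinic to $Q$ itself regarded as a trivial extension. This is immediate once $\B_0(Q) = 0$, since by Theorem \ref{t:uct2} the group $\HHCP^2(Q, N)$ then coincides with $\Ext(Q^{\rm ab}, N)$, and extensions in the image of $\psi$ are exactly those pulled back from abelian extensions of $Q^{\rm ab}$; all of these are isoclinic to the trivial extension because their commutator maps factor through $Q^{\rm ab} \times Q^{\rm ab} \to 1$. Either route — via Lemma \ref{l:isostemext} or via the splitting in Theorem \ref{t:uct2} — gives the result.

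I do not anticipate a genuine obstacle here: the corollary is essentially a formal consequence of the triviality of $\B_0(Q)$ under the commuting probability hypothesis, fed into the isoclinism reduction already established. The one thing to be explicit about is that ``trivial kernel'' should be read up to isoclinism of extensions, not up to equivalence, so that no finiteness or splitting subtlety intervenes. I would therefore keep the proof to two or three sentences, citing \cite{Jez15} for $\B_0(Q) = 0$ and Lemma \ref{l:isostemext} for the isoclinism reduction.
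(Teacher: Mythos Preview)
Your primary argument is correct and essentially identical to the paper's: invoke \cite{Jez15} for $\B_0(Q)=0$, use Lemma~\ref{l:isostemext} to replace the extension by a stem central CP extension, and observe that its kernel (a quotient of $\B_0(Q)$) is then trivial---the paper phrases this last step via Theorem~\ref{t:cpcovers} rather than reading it off the proof of Lemma~\ref{l:isostemext}, but the content is the same. One minor slip in your alternative route: for non-abelian $Q$, an extension in $\im\psi$ does \emph{not} have commutator map factoring through $Q^{\rm ab}\times Q^{\rm ab}\to 1$ (the derived subgroup of the extension group is still isomorphic to $[Q,Q]$); the correct reason such extensions are isoclinic to the trivial one is that $\im\psi=\ker\tilde\varphi$, so $\tilde\theta(e)=0$.
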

\begin{proof}
The Bogomolov multiplier of $Q$ is trivial by \cite[Corollary 1.2]{Jez15}. Every
central CP extension of $Q$ is isoclinic to a stem extension by Lemma
\ref{l:isostemext}, and the kernel of the latter extension must be trivial by
Theorem \ref{t:cpcovers}.
\end{proof}

And secondly, the bounds for the Bogomolov multiplier from Section
\ref{s:appbounds} can be applied in the setting of commuting probability.  This
is, in a way, a non-absolute version of the main result of \cite{Jez15}.

\begin{theorem}
\label{t:boguniversalbound}

Let $\epsilon > 0$, and let $Q$ be a group with $\cp(Q) > \epsilon$. Then
$|\B_0(Q)|$ can be bounded in terms of a function of $\epsilon$ and $\max \{
\dd(S) \mid S \textrm{ a Sylow subgroup of $Q$} \}$. Moreover, $\exp \B_0(Q)$
can be bounded in terms of a function of $\epsilon$.

\end{theorem}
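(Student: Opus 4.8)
The plan is to bound $|\B_0(Q)|$ and $\exp\B_0(Q)$ by reducing first to a $p$-group computation via a Sylow subgroup, and then using the structural bounds of Section~\ref{s:appbounds} in combination with the Gustafson-type structure theory that underlies \cite{Jez15}. The key input about commuting probability is that $\cp(Q)>\epsilon$ forces the group to be, up to bounded-index and bounded-size pieces, abelian; more precisely, a theorem in the spirit of P. M. Neumann (see also \cite{Jez15}) gives a normal subgroup $S \trianglelefteq Q$ with $|Q:S|$ bounded in terms of $\epsilon$ and $|[S,S]|$ bounded in terms of $\epsilon$. I would begin by recording this fact, then pass to a Sylow $p$-subgroup $P$ of $Q$: since $\B_0(P)$ surjects onto the Sylow $p$-part of $\B_0(Q)$ (a standard transfer/corestriction argument, as $\res\circ\cor$ is multiplication by $|Q:P|$, which is prime to $p$), it suffices to bound $|\B_0(P)|$ for each Sylow subgroup in terms of $\epsilon$ and $\dd(P)$.

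For the exponent bound, which is cleaner, I would argue as follows. Fix a Sylow $p$-subgroup $P$. Using $\cp(Q)>\epsilon$, and hence $\cp(P)\geq\cp(Q)>\epsilon$ (commuting probability does not decrease on passing to subgroups), the aforementioned structure theory yields an abelian subgroup $A\leq P$ of index bounded by a function of $\epsilon$ alone. By the second Corollary of Section~\ref{s:appbounds}, $\exp\B_0(P)$ divides $|P:A|$, hence is bounded in terms of $\epsilon$. Since this holds for every Sylow subgroup and $\B_0(Q)=\prod_p \B_0(Q)_{(p)}$ with each $p$-part a quotient of $\B_0(P)$, we get $\exp\B_0(Q)$ bounded purely in terms of $\epsilon$, as claimed.

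For the order bound I would combine Proposition~\ref{p:sizeofbog} with an induction driven by the index of an abelian subgroup. Again fix a Sylow $p$-subgroup $P$ with an abelian subgroup $A$ of $\epsilon$-bounded index $n=|P:A|$. Build a subnormal chain $A=A_0\trianglelefteq A_1\trianglelefteq\cdots\trianglelefteq A_k=P$ with cyclic successive quotients and $k\leq\log_p n$ bounded in terms of $\epsilon$; at each step Proposition~\ref{p:sizeofbog} gives $\dd(\B_0(A_{i+1}))\leq\dd(\B_0(A_i))+\dd(A_i^{\mathrm{ab}})$, while $\dd(A_i^{\mathrm{ab}})\leq\dd(A_i)$ is controlled by Schreier's index formula $\dd(A_i)-1\leq|P:A_i|(\dd(P)-1)$, hence by $\dd(P)$ and $\epsilon$. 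Starting from $\dd(\B_0(A))=0$, this bounds $\dd(\B_0(P))$ in terms of $\epsilon$ and $\dd(P)$; combined with the exponent bound just obtained, $|\B_0(P)|=\exp(\B_0(P))^{\dd(\B_0(P))}$ is bounded likewise. Finally $|\B_0(Q)|=\prod_p|\B_0(Q)_{(p)}|$ and each factor divides $|\B_0(P)|$, and only $\epsilon$-boundedly many primes $p$ can divide $|Q|$ in a way contributing nontrivially (indeed a prime $p$ contributes to $\B_0$ only if the Sylow $p$-subgroup is non-cyclic, and one checks $\cp(Q)>\epsilon$ limits the number of such primes); assembling these yields the bound on $|\B_0(Q)|$ in terms of $\epsilon$ and $\max\{\dd(S)\}$.

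The main obstacle is the first step: extracting from $\cp(Q)>\epsilon$ a usable structural decomposition with all parameters ($|Q:S|$, $|[S,S]|$, or the index of an abelian subgroup) bounded in terms of $\epsilon$ alone. This is exactly the content of the commuting-probability structure theory and must be invoked carefully; the subtlety is that one wants an \emph{abelian} subgroup of bounded index inside each Sylow subgroup rather than merely a bound on $|[Q,Q]|$ or on the nilpotency class, so some care is needed to arrange the reduction so that Proposition~\ref{p:sizeofbog} and the abelian-subgroup corollary apply with only an $\epsilon$-dependence. Once that reduction is in place, the remaining steps are the routine transfer argument for passing to Sylow subgroups and the bookkeeping with Schreier's formula described above.
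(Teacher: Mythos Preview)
Your overall architecture is right, and you even correctly identify the crux at the end: getting the ``abelian subgroup of $\epsilon$-bounded index'' inside each Sylow $p$-subgroup. The problem is that this step is not a matter of ``some care''---it is simply false. Take $P$ to be an extraspecial $p$-group of order $p^{2n+1}$: one has $\cp(P)\to 1/p$ as $n\to\infty$, so $\cp(P)>\epsilon$ for all $n$ once $p$ is fixed, yet every abelian subgroup of $P$ has index at least $p^{n}$. Thus the Corollary ``$\exp\B_0(P)\mid |P:A|$'' cannot give an $\epsilon$-only bound, and the chain argument for $\dd(\B_0(P))$ starting from an abelian $A$ collapses for the same reason. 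Neumann's theorem only yields a normal subgroup $K$ with $|Q:K|$ and $|[K,K]|$ bounded in terms of $\epsilon$; it does \emph{not} produce an abelian subgroup of bounded index, and no amount of massaging will.

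The paper works around this exactly by stopping at such a class-$\leq 2$ subgroup $K$ (this is the Neumann--Eberhard input) rather than pushing to an abelian one, and then paying the price of analysing $\B_0(K)$ directly. For the rank, your chain-plus-Schreier argument is used verbatim between $P$ and $K$; the missing piece is a bound on $\dd(\B_0(K))$, obtained by passing to $\dd(\M(K))$ and controlling that with the Ganea map $[K,K]\otimes K^{\mathrm{ab}}\to \M(K)$ together with $\dd(\M(K^{\mathrm{ab}}))\le\binom{\dd(K)}{2}$, all in terms of $\dd(K)$ and hence $\dd(P)$ and $\epsilon$. For the exponent, Proposition~\ref{p:expofbog} gives $\exp\B_0(P)\mid |P:K|\cdot\exp\B_0(K)$, and the real work is showing $\exp\B_0(K)\mid |[K,K]|$: one kills a central commutator $z\in[K,K]$ at a time and uses the short exact sequence relating $\B_0(K)$ to $\B_0(K/\langle z\rangle)$, whose kernel sits inside $J_z=\langle x\cwedge y\mid [x,y]=z\rangle$ and has exponent $p$ since $K$ has class $2$. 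That iterative $J_z$-argument is precisely the idea your proposal is missing.

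Your remark about only $\epsilon$-boundedly many primes contributing is essentially correct, though the reason is not ``non-cyclic Sylow'' but rather: for $p\nmid |Q:K|\cdot|[K,K]|$ the Sylow $p$-subgroup lies in $K$ and is abelian, hence has trivial $\B_0$.
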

\begin{proof}

Since the $p$-part of $\B_0(Q)$ embeds into the Bogomolov multiplier of a
$p$-Sylow subgroup of $Q$, we are immediately reduced to considering only
$p$-groups. It follows from \cite{Neu89,Ebe15} that $Q$ has a subgroup $K$ of
nilpotency class $2$ with $|Q:K|$ and $|[K,K]|$ both bounded by a function of
$\epsilon$. Applying Proposition \ref{p:sizeofbog} repeatedly on a sequence of
subgroups from $Q$ to $K$, each of index $p$ in the previous one, it follows
that $\dd(\B_0(Q))$ can be bounded in terms of $\epsilon$ and $\dd(\B_0(K))$.
Now, $\dd(\B_0(K)) \leq \dd(M(K))$, and we can use the Ganea map $[K,K] \otimes
K/[K,K] \to M(K)$ whose cokernel embeds into $M(K/[K,K])$. Note that $\dd([K,K]
\otimes K/[K,K]) \leq \dd(K)^2$ and $\dd(M(K/[K,K])) \leq \binom{\dd(K)}{2}$.
Whence we obtain a bound for $\dd(\B_0(Q))$ in terms of $\epsilon$ and $\dd(Q)$.
For the exponent, use Proposition \ref{p:expofbog} to bound $\exp \B_0(Q)$ by a
function of $|Q:K|$ and $\exp \B_0(K)$. If $K$ is abelian, then we are done. If
not, then choose a commutator $z$ in $K$. Set $J_z = \langle x \curlywedge y \mid [x,y] = z \rangle \leq Q \curlywedge Q$, and denote by $X$ the kernel of the map $\B_0(K)\to\B_0(K/\langle z \rangle)$. Then it follows from \cite{Jez15} that there is a commutative diagram as
follows. 

\begin{equation*}
\xymatrix{
& 1 \ar[d] & 1 \ar[d]& 1 \ar[d]& \\
1 \ar[r] & X\ar[d] \ar[r] & \B_0(K)\ar[d] \ar[r] & \B_0(K/\langle z \rangle)\ar[d] \ar[r] & 1 \\
1 \ar[r] & J_z \ar[d]\ar[r] & K \curlywedge K \ar[d]\ar[r] & K/\langle z \rangle \curlywedge K/\langle z \rangle \ar[d]\ar[r] & 1 \\
1 \ar[r] & \langle z \rangle\ar[d] \ar[r] & [K,K]\ar[d] \ar[r] & [K,K]/\langle z \rangle \ar[d]\ar[r] & 1 \\
& 1  & 1 & 1 &
}
\end{equation*}

Observe that $\exp J_z = p$, and so $\exp X = p$. It then follows that $\exp
\B_0(K)$ is at most $p \cdot \exp \B_0(K / \langle z \rangle)$. Repeating this
process with $K / \langle z \rangle$ instead of $z$ until we reach an abelian
group, we conclude that $\exp \B_0(K)$ divides $|[K,K]|$. The latter is bounded
in terms of $\epsilon$ alone. The proof is now complete.
\end{proof}

We end with an intriguing corollary concerning the exponent of the Schur
multiplier.

\begin{corollary}

Given $\epsilon > 0$, there exists a constant $C = C(\epsilon)$ such that for
every group $Q$ with $\cp(Q) > \epsilon$, we have $\exp M(Q) \leq C \cdot \exp Q$.

\end{corollary}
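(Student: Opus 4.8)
The plan is to reduce the statement about $\exp \M(Q)$ to the already-established bound on $\exp \B_0(Q)$ by exploiting the exact sequence relating the Schur multiplier and the Bogomolov multiplier, namely the short exact sequence $0 \to \M_0(Q) \to \M(Q) \to \B_0(Q) \to 0$. Since $\M_0(Q)$ is generated by elements $x \wedge y$ with $[x,y] = 1$, one expects $\exp \M_0(Q)$ to behave like $\exp Q$; the contribution of $\B_0(Q)$ is then absorbed into the constant $C(\epsilon)$ coming from Theorem \ref{t:boguniversalbound}. So the target inequality $\exp \M(Q) \leq C(\epsilon) \cdot \exp Q$ should follow once we know $\exp \M_0(Q) \leq C' \cdot \exp Q$ for an absolute constant $C'$, together with $\exp \B_0(Q) \leq C''(\epsilon)$.

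First I would reduce to $p$-groups: both $\M$ and $Q$ decompose along Sylow subgroups in the relevant sense ($\M(Q)_p$ embeds into $\M(P)$ for $P$ a Sylow $p$-subgroup, and the $p$-part of $\exp Q$ is $\exp P$ up to the standard comparison), and $\cp(Q) > \epsilon$ forces $\cp(P) > \epsilon$ for every Sylow subgroup $P$. So assume $Q = P$ is a finite $p$-group with $\cp(P) > \epsilon$. Next, invoke the structure result used in the proof of Theorem \ref{t:boguniversalbound}: by \cite{Neu89,Ebe15} there is a subgroup $K \leq P$ of nilpotency class $2$ with $|P:K|$ and $|[K,K]|$ bounded by functions of $\epsilon$. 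For class-$2$ groups the exponent of the Schur multiplier is controlled — $\exp \M(K)$ divides $\exp K \cdot \exp[K,K]$ or even just $\exp K$ in many formulations — and in any case $\exp \M(K)$ is bounded by $\exp K$ times a function of $\epsilon$. Then one climbs back up from $K$ to $P$ through the chain of index-$p$ subgroups, using at each step the Jones-type bound: $\exp \M(P)$ divides $|P:K| \cdot \exp \M(K)$ (the transfer argument, exactly as in Proposition \ref{p:expofbog} but for $\M$ rather than $\B_0$; this is the classical result of \cite{Jon73}). Since $|P:K|$ is bounded in terms of $\epsilon$, this yields $\exp \M(P) \leq C(\epsilon) \cdot \exp K \leq C(\epsilon) \cdot \exp P$.

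Alternatively, and perhaps more cleanly, one can argue directly via the exact sequence $0 \to \M_0(Q) \to \M(Q) \to \B_0(Q) \to 0$: the exponent of an extension divides the product of the exponents of kernel and quotient, so $\exp \M(Q) \mid \exp \M_0(Q) \cdot \exp \B_0(Q)$. By Theorem \ref{t:boguniversalbound}, $\exp \B_0(Q)$ is bounded by a function of $\epsilon$ alone. For $\exp \M_0(Q)$, one uses that it is generated by commuting pairs, and a theorem (essentially going back to the structure of the exterior square, cf. the relations \eqref{eq:ext}) gives that $\exp \M_0(Q)$ divides $\exp Q$ — intuitively, $(x \wedge y)^{\exp Q}$ collapses when $x,y$ commute because then $x \wedge y$ lies in a "diagonal" abelian piece. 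Combining, $\exp \M(Q) \mid \exp Q \cdot f(\epsilon)$, which is the claim with $C = f(\epsilon)$.

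The main obstacle I anticipate is pinning down the precise statement and reference for the bound on $\exp \M_0(Q)$ (or on $\exp \M(K)$ for class-$2$ groups $K$) — specifically whether $\exp \M_0(Q) \mid \exp Q$ holds on the nose or only up to a small fixed factor such as a power of $p$ bounded independently of $Q$. If only the weaker form is available, one must check that the extra factor is still absorbed into $C(\epsilon)$; since in the reduction $|P:K|$ and $|[K,K]|$ are $\epsilon$-bounded, any such fixed overhead is harmless. The rest — the Sylow reduction, the climb via the Jones transfer bound, and assembling the constant — is routine and parallels the proof of Theorem \ref{t:boguniversalbound} almost verbatim.
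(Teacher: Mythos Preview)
Your alternative approach is exactly the paper's proof: from the short exact sequence $0 \to \M_0(Q) \to \M(Q) \to \B_0(Q) \to 0$ one gets $\exp \M(Q) \leq \exp \M_0(Q) \cdot \exp \B_0(Q)$, and Theorem \ref{t:boguniversalbound} bounds the second factor in terms of $\epsilon$ alone. Your anticipated obstacle dissolves, because $\exp \M_0(Q) \mid \exp Q$ holds on the nose: $\M_0(Q)$ is abelian (it sits inside $\M(Q)\cong \HH_2(Q,\mathbb{Z})$), and each generator $x\wedge y$ with $[x,y]=1$ satisfies $(x\wedge y)^n = x^n\wedge y$ by the relations \eqref{eq:ext}, hence has order dividing $\exp Q$.
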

\begin{proof}
We have that $\exp M(Q) \leq \exp \B_0(Q) \cdot \exp M_0(Q)$ and $\exp M_0(Q) \leq \exp Q$. Now apply Theorem \ref{t:boguniversalbound}.
\end{proof}

\end{document}